\theoremstyle{plain}
\newtheorem{lemma}{Lemma}[section]
\newtheorem{corollary}[lemma]{Corollary}
\newtheorem{proposition}[lemma]{Proposition}
\newtheorem{definition}[lemma]{Definition}
\theoremstyle{remark}
\newtheorem{remark}{Remark}
\def\ee{\varepsilon}
\def\bx{\mathbb X}
\def\by{\mathbb Y}
\def\bs{\mathbb S}
\def\mi{\mathrm{i}}
\def\odd{\mathrm{odd}}
\def\even{\mathrm{even}}
\def\mc{\mathcal}
\def\mb{\mathbb}
\begin{document}
 
\title{The Restricted Three-Body Problem as a Perturbed Duffing Equation}

\date{May 11, 2025}
\author{Rongchang Liu}
\address[Rongchang Liu]{Department of Mathematics \\ University of Arizona \\ Tucson, AZ 85750}
\email{lrc666@arizona.edu}

\author{Qiudong Wang}
\address[Qiudong Wang]{Department of Mathematics \\ University of Arizona \\ Tucson, AZ 85750}
\email{wangq@arizona.edu}

\begin{abstract} 
This paper investigates the restricted circular planar three-body problem.  We prove that for every negative Jacobi constant of sufficiently large magnitude, the surface of unperturbed parabolic solutions breaks to induce homoclinic tangle for all but at most finitely many mass ratios of primaries. This result is not covered by \cite{G} as the required large magnitude of the Jacobi constant is uniform across the mass ratios. 

Our approach consists of three main ingredients. First, by introducing new coordinate transformations, we reformulate the restricted three-body problem as a perturbed Duffing equation. Second, we adopt the method recently introduced in \cite{CW} to derive integral equations for the primary stable and unstable solutions. This enables us to effectively capture the order of the singularities involved and to further establish the existence and analytic dependence of the invariant manifolds on the mass ratio of the primaries and the Jacobi constant. Third, in evaluating the Poincar\'e-Melnikov integral, we take advantage of the explicit homoclinic solution of the unperturbed Duffing equation.

Compared to existing works, our proof is significantly simpler and more direct. Moreover, the paper is self-contained: we do not rely on McGehee's analysis in \cite{Mc} to justify the applicability of the Poincar\'e-Melnikov method. 

\end{abstract}

\maketitle

\section{Introduction}

The planar  three-body problem  consists of  six second-order   equations describing the Newtonian motion of three point masses  $m_1, m_2, m_3 > 0$  in two-dimensional Euclidean space ${\mathbb R}^2$. In contrast to the two-body problem, which is completely integrable, the three-body problem is very difficult to work with partly because it is a system of relatively high phase dimension. 

\vskip0.05in 

The {\it restricted} planar  three-body problem is a  problem of much lower phase dimension that is derived from the planar three-body problem by letting $m_3 \to 0$. In this limit, the motion of $m_1$ and $m_2$ is governed by the two-body problem, independent of $m_3$, while the dynamics of the massless $m_3$ is determined by a reduced system derived from the original equations.
Following tradition \cite{Wintner},  $m_1, m_2$  are called the primaries, and $m_3$ the {\it infinitesimal} mass. The restricted problem models the motion of the infinitesimal mass in the gravitational field generated by the primaries. Since the primaries move along conic sections, each type (circle, ellipse, etc.) leads to a corresponding restricted problem. It has been a convention in celestial mechanics to refer the {\it restricted  circular planar three-body problem} as the restricted three-body problem unless it is otherwise stated. This version admits a conserved quantity known as the Jacobi integral. 
 
\vskip0.05in 

The restricted three-body problem played a central role in Poincar\'e's pioneering development of geometric methods in the theory of ordinary differential equations. Poincar\'e gradually realized that treating the mass ratio of the primaries as a small perturbation parameter leads to the break of the invariant surface formed by parabolic solutions of the unperturbed two-body problem, which gives rise to intricate dynamical structures that he termed \textit{homoclinic tangles} \cite{Po, Po1, Po2}.  He introduced a computational method aimed at verifying the existence of homoclinic tangles in Hamiltonian systems, and applied it to an explicit example, illustrating that homoclinic tangles, as a dynamical phenomenon, do indeed exist. This computational method was later reformulated to cover periodically perturbed Hamiltonian equations by Melnikov \cite{M}.

\vskip0.05in 

However, Poincar\'e \textit{did not} apply this computational scheme to prove that, for the restricted three-body problem, the unperturbed invariant surface formed by parabolic solutions of the two-body problem indeed breaks to form homoclinic tangles. It appears that there were several technical hurdles in applying Poincar\'e's computational scheme (commonly known as the Poincar\'e-Melnikov method in current literature) to the restricted three-body problem. First, the fixed point at infinity, to which the parabolic solutions of the two-body problem approach, is a highly degenerate fixed point, the local solution structure of which is not easily determined. Second, due to this degeneracy, the size of the neighborhood around the fixed point, within which the local dynamics can be fully understood using local analysis techniques, tends to be much smaller than that of a non-degenerate saddle, introducing uncertainty about the validity of applying Poincar\'e's computational scheme. Finally, assuming these two hurdles are somewhat removed, it remains a challenging computational task to explicitly evaluate the Poincar\'e-Melnikov integral for the restricted three-body problem.
 
\vskip0.05in 

The first hurdle was, to some extent, removed much later by a paper of McGehee on local stable and unstable manifolds around certain degenerate fixed points. McGehee proved in \cite{Mc} that, for the restricted three-body problem, the local stable and unstable manifolds of the fixed point at infinity are real analytic in phase space. This study was followed by a paper by Llibre and Sim\'o \cite{LS}, who regarded McGehee's result as a proper justification for applying Poincar\'e's computational scheme. They calculated the corresponding Poincar\'e-Melnikov integral and affirmed that, assuming first that the Jacobi constant is sufficiently large and then that the ratio of the masses of the two primaries is sufficiently small, a homoclinic tangle exists in Poincar\'e's original setting.

\vskip0.05in

In \cite{Xia}, Xia also regarded McGehee's result as a proper justification for the application of the Poincar\'e-Melnikov method in this case. He went one step further to acclaim that McGehee's method can also be extended to prove the analytic dependency of the splitting distance of the stable and unstable manifolds on the ratio of the primary masses and on the Jacobi constant. Xia further argued that, due to the existence of singularity of binary collisions, the Melnikov function automatically possesses a non-tangential zero if the Jacobi constant is close to the value allowed for binary collisions. He then concluded that, excluding at most finitely many mass ratios of the primaries, a homoclinic tangle exists in Poincar\'e's original setting.

\vskip0.05in 

In the recent seminal work \cite{G}, Guardia, Mart\'in, and Seara applied techniques from the theory (see \cite{BGFS, G} and references therein) of exponentially small splitting in high-frequency perturbation equations to the restricted three-body problem. By appropriately parametrizing the invariant manifolds via generating functions, they proved that, for any value of the mass ratio of the primaries and sufficiently large Jacobi constants, homoclinic tangles exist.

\vskip0.05in 

We note that the scope of the present review is deliberately narrow: we have only cited results pertaining specifically to the  restricted three-body problem. For a broader historical account of the rich interplay between the study of the N-body problem and the development of the modern  theory of  dynamical systems, we refer the reader to \cite{DH}.

\smallskip

In this paper, we first rewrite the equation of the restricted three-body problem explicitly as a perturbed Duffing equation. Building on the framework introduced in \cite{CW}, we derive integral equations characterizing the primary stable and unstable solutions. These integral equations enable us to effectively capture the balance on the order of the singularities involved, allowing us to establish the analytic dependence of the splitting distance on the mass ratio of the primaries and on the Jacobi constant. In addition, we evaluate the Melnikov integral using the homoclinic solution of the unperturbed Duffing equation. Our main conclusion is as follows.

\medskip

\noindent {\bf Main Theorem}. \label{mainthm}{\it
There exists $J_0<0$ with sufficiently large magnitude such that, for any Jacobi constant $J<J_0$, and for all but at most finitely many mass ratios of the primaries, the surface of unperturbed parabolic solutions breaks, inducing homoclinic tangles as originally anticipated by Henry Poincare.}

\medskip

The theorem is a direct consequence of Proposition \ref{props2.1} and Proposition \ref{prop3.1}. Note that the required large magnitude $|J_0|$ of the Jacobi constant is uniform across the mass ratios, and thus our result is not covered by that of \cite{G}, where the magnitude tends to infinity as the mass ratio approaches $\frac12$. Furthermore, our proof is significantly simpler and more direct. This paper is also self-contained: it does not rely on McGehee's analysis in \cite{Mc} to justify the applicability of the Poincar\'e-Melnikov method.

\medskip

The paper is organized as follows. In Section~\ref{s1}, we reformulate the restricted three-body problem as a perturbed Duffing equation, and derive the governing equation for solutions near the unperturbed homoclinic orbit. In Section~\ref{s2}, we derive integral equations for the primary stable and unstable solutions and prove both their existence and their analytic dependence on parameters. In Section~\ref{s3}, we evaluate the Poincar\'e-Melnikov integral to establish the existence of transversal intersections of the invariant manifolds for a nontrivial open interval of mass ratios. Some computational derivations are deferred to Appendices~\ref{s.050501}, ~\ref{s2.4} and ~\ref{a.051001}.

\section{Derivation of Equations Around Parabolic Solution}\label{s1}
This section sets up the basic equations of the paper.
In subsection ~\ref{s1.1}, we derive the equations of the restricted three-body problem using the polar form of the Jacobi coordinates within the Lagrangian formulation of classical mechanics. In subsection ~\ref{s1.3}, we reduce the equations to a \emph{perturbed Duffing equation} by introducing a new set of coordinates that extend McGehee's change of variables. Finally, in subsection ~\ref{s1.4}, we derive the equation for solutions around the homoclinic solution of the unperturbed Duffing equation.

\subsection{Equations for the Restricted Three-body Problem}\label{s1.1}
For the three bodies $m_1, m_2$ and $m_3$ in ${\mathbb R}^2$, let $z_1 = (x_1,   y_1)$ be the vector from $m_1$ to $m_2$ and $z_2 = (x_2, y_2)$ be the vector from the center of masses of $m_1$ and $m_2$ to $m_3$. Denote $z_1 = (x_1, y_1) = x_1+ \mi y_1$ and $z_2 = (x_2, y_2) = x_2 + \mi y_2$.  The variables $z_1$, $z_2$ are the Jacobi coordinates for the three-body problem. Following the convention in classical mechanics, we  use dots on top to represent derivatives with respect to $t$. One dot is for velocity and two dots are for acceleration.

Let 
\[T =   \frac{1}{2}\mu_1 |\dot{z}_1|^2  +  \frac{1}{2}\mu_2 |\dot{z}_2|^2 \]
be the kinetic energy where 
\[ \mu_1  =  \frac{m_1 m_2}{m_1+ m_2},  \ \ \ \ \ \ \ \  \mu_2 = \frac{m_3(m_1+m_2)}{m_1 + m_2  + m_3}. \]
We use polar coordinates for $z_1$ and $z_2$ by letting 
\begin{equation*}
z_1 = r_1 e^{\mi \theta_1}, \ \ \ \ z_2 = r_2 e^{\mi \theta_2}.
\end{equation*}
The kinetic energy then becomes
\begin{equation*}
T = \frac{1}{2} \mu_1 \left(\dot{r}_1^2 + r_1^2 \dot{\theta}_1^2\right) +\frac{1}{2} \mu_2 \left(\dot{r}_2^2 + r_2^2   \dot{\theta}_2^2\right).  
\end{equation*}
Let the potential energy be 
\[ U =  \frac{m_1 m_2}{r_{12}} + \frac{m_1 m_3}{r_{13}} +  \frac{m_2 m_3}{r_{23}},  \]
where $r_{ij}$ are the distances from $m_i$ to $m_j$. The  Lagrangian for the planar three-body problem is 
\[L = T + U= \frac{1}{2} \mu_1 \left(\dot{r}_1^2 + r_1^2 \dot{\theta}_1^2\right) +\frac{1}{2} \mu_2 \left(\dot{r}_2^2 + r_2^2   \dot{\theta}_2^2\right) + U.
\]
Denoting 
\begin{equation*}
\alpha_1 =  \frac{m_2}{m_1+ m_2}, \ \ \ \ \ \ \ \  \alpha_2 = \frac{m_1}{m_1+ m_2},
\end{equation*}
we have 
\begin{equation*}
\begin{split}
r_{12} = &  |z_1| =  r_1,\\
r_{13} = & |z_2 + \alpha_1 z_1| =  \sqrt{r_2^2 + \alpha_1^2 r_1^2 + 2 \alpha_1 r_1 r_2 \cos (\theta_2 - \theta_1)}, \\
r_{23} = & |z_2 - \alpha_2 z_1| = \sqrt{r_2^2 + \alpha_2^2 r_1^2 -2 \alpha_2 r_1 r_2 \cos (\theta_2 - \theta_1)}. 
\end{split}
\end{equation*} 

Writing the Euler-Lagrange equations explicitly, we obtain the equation of motion for the planar three-body problem as
\begin{equation*}
\begin{split}
&   \ddot{r}_1 =  r_1  \dot{\theta}^2_1  + \mu_1^{-1}\partial_{r_1} U, \ \ \ \   \ddot{\theta}_1  = - \frac{2\dot{r}_1 \dot{\theta_1}}{r_1}+  \mu_1^{-1}\frac{1}{r_1^2}  \partial_{\theta_1} U, \\ 
&  \ddot{r}_2 =    r_2 \dot{\theta}_2^2 + \mu_2^{-1} \partial_{r_2}U, \ \ \ \ \ddot{\theta}_2  =  - \frac{2  \dot{r}_2 \dot{\theta}_2}{r_2} +  \mu_2^{-1}\frac{1}{r_2^2}\partial_{\theta_2} U.
\end{split}
\end{equation*}
 
\vskip0.05in

To obtain the equations for the restricted three-body problem, we let
\[m_3 = 0, \ \ \ \ m_1 + m_2 = 1.\]
The  equations for $r_1$, $\theta_1$ then become 
$$
 \ddot{r}_1 =  r_1  \dot{\theta}^2_1  -  \frac{1}{r_1^2}, \ \ \ \   \ddot{\theta}_1  = - \frac{2\dot{r}_1 \dot{\theta_1}}{r_1} 
$$
which allows a specific solution
$$
r_1 = 1, \ \ \ \ \theta_1 = t.
$$
We adopt this specific solution to write the equations for $r_2, \theta_2$ as 
\begin{equation}\label{diff-eqn2}
 \ddot{r}_2 =     r_2 \dot{\theta}_2^2 + f, \ \ \ \
 \ddot{\theta}_2  =  - \frac{2  \dot{r}_2 \dot{\theta}_2}{r_2} +   g,
\end{equation}
where
\begin{align*}
	f   &=    -   \frac{m_1 \left(r_2   +m_2   \cos (\theta_2 -t)\right)}{r_{13}^3} - \frac{m_2 \left(r_2   -m_1   \cos (\theta_2 - t)\right)}{r_{23}^3}, \\
	g  & =    \frac{m_1 m_2   \sin (\theta_2-t)}{r_2}\left( \frac{1}{r_{13}^3} - \frac{1}{  r_{23}^3}\right),\\
	r_{13} &=  \sqrt{r_2^2 + m_2^2 + 2 m_2  r_2 \cos (\theta_2 - t)},\\
	r_{23} &=  \sqrt{r_2^2 + m_1^2  -2 m_1  r_2 \cos (\theta_2 -t)}.
\end{align*}

\vskip0.05in

System \eqref{diff-eqn2} represents the restricted three-body problem, which describes the motion of a massless particle under the gravitational influence of two finite masses moving in circular orbits in a two-dimensional physical space.

\vskip0.05in

It is straightforward to check the following well-known fact in the above formulation. 

\begin{lemma}
	For all solutions of equation \eqref{diff-eqn2}, 
	\begin{equation}\label{jacobi-int}
	J := \frac{1}{2}\left(\dot{r}_2^2 + r_2^2 (\dot{\theta}_2 -1)^2 \right)- \frac{1}{2} r_2^2 - m_1 r_{13}^{-1} - m_2 r_{23}^{-1} 
	\end{equation}
	is an integral constant, which is commonly referred to as the Jacobi constant.
\end{lemma}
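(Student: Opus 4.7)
The plan is to prove conservation of $J$ by direct differentiation along solutions of \eqref{diff-eqn2}. Expanding the kinetic part as $\tfrac12\dot{r}_2^2+\tfrac12 r_2^2\dot\theta_2^2-r_2^2\dot\theta_2$, I would write
\[
\dot J=\dot r_2\,\ddot r_2+r_2\dot r_2\dot\theta_2^{\,2}+r_2^2\dot\theta_2\ddot\theta_2-2r_2\dot r_2\dot\theta_2-r_2^2\ddot\theta_2-m_1\tfrac{d}{dt}r_{13}^{-1}-m_2\tfrac{d}{dt}r_{23}^{-1},
\]
and then substitute $\ddot r_2=r_2\dot\theta_2^{\,2}+f$ and $\ddot\theta_2=-2\dot r_2\dot\theta_2/r_2+g$. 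The "free" parts of the equations of motion (those independent of $f,g$) contribute $r_2\dot r_2\dot\theta_2^{\,2}+r_2\dot r_2\dot\theta_2^{\,2}-2r_2\dot r_2\dot\theta_2^{\,2}$ and $-2r_2\dot r_2\dot\theta_2+2r_2\dot r_2\dot\theta_2$, both of which telescope to zero, leaving
\[
\dot J=\dot r_2 f+r_2^2(\dot\theta_2-1)g-m_1\tfrac{d}{dt}r_{13}^{-1}-m_2\tfrac{d}{dt}r_{23}^{-1}.
\]

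The core step is to differentiate $r_{13}^{-1}$ and $r_{23}^{-1}$ explicitly, using the closed-form expressions in terms of $r_2$ and $\phi:=\theta_2-t$ (so $\dot\phi=\dot\theta_2-1$). From $r_{13}^{\,2}=r_2^2+m_2^2+2m_2 r_2\cos\phi$ one reads off
\[
\tfrac{d}{dt}r_{13}^{-1}=-r_{13}^{-3}\bigl[\dot r_2(r_2+m_2\cos\phi)-m_2 r_2\sin\phi\cdot\dot\phi\bigr],
\]
and a parallel formula for $r_{23}^{-1}$ obtained by replacing $m_2$ with $-m_1$ (with the corresponding sign flip on the $\sin\phi$ term). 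Multiplying by $-m_1$ and $-m_2$ and summing, the coefficient of $\dot r_2$ equals exactly $-f$ (as given in the statement of the system) and the coefficient of $\dot\phi=\dot\theta_2-1$ equals exactly $-r_2^2 g$, producing the desired cancellation $\dot J=0$.

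The calculation is entirely routine; the only real obstacle is sign bookkeeping, since the asymmetric factors $+m_2\cos\phi$ versus $-m_1\cos\phi$ in $r_{13}$ and $r_{23}$ propagate into the definitions of $f$ and $g$ and must line up correctly. A conceptually cleaner alternative, which I would mention in passing, is to pass to the synodic frame via $\phi=\theta_2-t$: the Lagrangian becomes autonomous, so its Jacobi--Hamilton function (which coincides with the $J$ defined in \eqref{jacobi-int}) is conserved as a matter of principle. Either route concludes the lemma.
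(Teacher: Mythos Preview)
Your proof is correct; the direct differentiation is carried out accurately and the cancellations you describe all go through (in particular, the coefficient of $\dot r_2$ in $-m_1\tfrac{d}{dt}r_{13}^{-1}-m_2\tfrac{d}{dt}r_{23}^{-1}$ is indeed $-f$, and the coefficient of $\dot\phi$ is $-r_2^2 g$). The paper does not actually supply a proof of this lemma: it simply labels the statement as a well-known fact and remarks that it is straightforward to check, which is precisely the routine computation you have written out. Your alternative observation via the autonomous synodic Lagrangian is also a legitimate shortcut.
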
 

\subsection{Reduction to a Perturbed Duffing Equation}\label{s1.3} The phase variables for equation ~\eqref{diff-eqn2} are
$(r_2,   \dot{r}_2,  \theta_2,  \dot{\theta}_2)$. We now introduce new phase variables $(u, v, \theta, w)$ and a new time $\tau$, following McGehee, by letting 
\begin{equation*}
 u = r_2^{-1}, \ \ v = u^{-1/2} \dot{r}_2, \ \ \theta = \theta_2 -t , \ \ w = u^{-3/2}\dot{\theta}_2, \ \ \  d \tau = \frac{1}{\sqrt{2}}u^{3/2} d t.
\end{equation*}
Equations for $u, v, \theta, w$ in $\tau$ are
\begin{equation*}
\begin{split}
		\frac{du}{d\tau} = & -   \sqrt{2} v u, \\ 
	\frac{d\theta}{d\tau}  = & \sqrt{2} \left(w - u^{-3/2}\right),  \\ 
		\frac{dw}{d\tau}  = &-\frac{1}{ \sqrt{2}}  v w    + \sqrt{2}u G, \\
	\frac{dv}{d\tau} = &  \frac{1}{ \sqrt{2}}  v^2 + \sqrt{2} w^2 - \sqrt{2} +  \sqrt{2} F,
	\end{split} 
\end{equation*}
where
\begin{equation*}
\begin{split}
F = & 1 -    m_1 \left(1   +m_2 u  \cos \theta\right)R_{13}^{-3} -  m_2 \left(1   -m_1  u \cos \theta \right)R_{23}^{-3}, \\
G = &    m_1 m_2   \sin \theta \left(R_{13}^{-3} - R_{23}^{-3}\right),
\end{split}
\end{equation*}
in which 
\begin{equation*}
\begin{split} 
R_{13} = &  \sqrt{1+ m_2^2 u^2 + 2 m_2 u \cos \theta },   \ \ \ \ R_{23} =  \sqrt{1 + m_1^2 u^2 -2 m_1 u \cos \theta }. 
\end{split}
\end{equation*}

\medskip

We further introduce new variables $X, Y$ by letting 
\begin{equation}\label{XY}
X = w, \ \ \ Y = -\frac{1}{\sqrt{2}}vw.
\end{equation}
The equations for $\theta, X, Y$ in $\tau$ are 
\begin{equation}\label{EqnXY}
\begin{split}
 	\frac{d\theta}{d\tau}  = &  \sqrt{2} \left(X- u^{-3/2}\right),  \\
	\frac{dX}{d\tau}  	= &Y    + \sqrt{2}  u  G,\\
	\frac{dY}{d\tau}=& X -  X^3 -  XF+ \sqrt{2} u\frac{Y}{X} G.
	\end{split}
\end{equation}
Here we dropped the equation for $u$ despite a clear occurrence of $u$ on the right hand of the equations for $\theta, X, Y$. We, however, can solve  $u$ for $\theta, X, Y$ by using the Jacobi integral  \eqref{jacobi-int},
\begin{equation}\label{e.050304}
u^{1/2}J =  \frac{1}{2}  u^{3/2} v^2 + \frac{1}{2} u^{3/2} w^2  -  w   - m_1u^{3/2} R_{13}^{-1} - m_2 u^{3/2} R_{23}^{-1}.
\end{equation}
 
In this paper, we only consider the case of $J < 0$ and assume $|J|\gg 1$. We now replace $u$ with a new variable $U$ by letting
\[U = |J| u^{1/2} X^{-1},\]
and set 
\[|J|^{-1} = \varepsilon, \ \ \ \  m_2 = \rho,  \ \ \ \ m_1 = 1- \rho.\]
Then equation \eqref{EqnXY} for the restricted three-body problem can be rewritten into a perturbed Duffing equation as 
\begin{equation}\label{EqnXYU1}
	\begin{split}
	\frac{d\theta}{d\tau}  = &  \sqrt{2} \left(X- \frac{1}{\varepsilon^{3} U^{3}X^{3} }\right),   \\
	\frac{dX}{d\tau}  	= &Y    + \sqrt{2} \varepsilon^2 U^2 X^2   G,\\
	\frac{dY}{d\tau}=& X -  X^3 -  XF+  \sqrt{2}\varepsilon^2 U^2 X Y G.
	\end{split}
\end{equation}
Here 
\begin{equation}\label{e.050302}
	\begin{split}
	F = & 1 -    (1-\rho) \left(1   +\rho \varepsilon^2 U^2 X^2  \cos \theta\right)R_{13}^{-3} - \rho \left(1   -(1-\rho)  \varepsilon^2 U^2 X^2 \cos \theta \right)R_{23}^{-3}, \\
	G = &   \rho(1-\rho)   \sin \theta \left(R_{13}^{-3} - R_{23}^{-3} \right),
	\end{split}
\end{equation}
where
\begin{equation}\label{e.050403}
\begin{split}
	R_{13} = &  \sqrt{1+\rho^2\varepsilon^4 U^4 X^4 + 2 \rho \varepsilon^2 U^2 X^2 \cos \theta },  \\
	R_{23} = &  \sqrt{1 + (1-\rho)^2 \varepsilon^4  U^4 X^4 -2 (1-\rho) \varepsilon^2 U^2 X^2 \cos \theta },
\end{split}
\end{equation}
and $U$, as a function of $X, Y, \theta$, is determined by the equation for the Jacobi integral \eqref{e.050304} in the new coordinates,
\begin{equation}\label{Jacobi1}
	U   =  1- \varepsilon^3 U^3 Y^2 - \frac{1}{2} \varepsilon^3  U^3 X^4   + (1-\rho) \varepsilon^3  U^3 X^2 R_{13}^{-1}+ \rho \varepsilon^3  U^3 X^2 R_{23}^{-1}.
\end{equation}

\subsection{Equations Around Unperturbed Parabolic Solution}\label{s1.4}
Since we are mainly interested in the dynamics in a neighborhood of the unperturbed parabolic solution, it is useful to develop the corresponding equations for later use. 

\medskip

In view of equation \eqref{EqnXYU1}, one observes that when $\ee=0$, the equation for $(X,Y)$ reduces to the Duffing equation with a saddle at the origin. To deal with the singularity at $\ee=0$ in the $\theta$ equation, we regard the following system
\begin{equation}\label{e.050301}
	\begin{split}
	\frac{d\theta}{d\tau}  = &  \sqrt{2} \left(X- \frac{1}{\varepsilon^{3}X^{3} }\right),   \\
	\frac{dX}{d\tau}  	= &Y ,\\
	\frac{dY}{d\tau}=& X -  X^3, 
	\end{split}
\end{equation}
as the ``unperturbed" system to \eqref{EqnXYU1}. A direct integration shows that $(X,Y,\theta)=(a,b,\psi)$ with 
\begin{align}\label{e.050303}
	\begin{split}
	&a(\tau) =    \frac{2 \sqrt{2}}{ e^{\tau}+e^{-\tau}}, \ \ \ b(\tau) =  \frac{2 \sqrt{2} \left(e^{-\tau}-e^{\tau}\right)}{\left(e^{\tau}+e^{-\tau}\right)^2},\\
	& \psi(\tau) = 2 \tan^{-1} \frac{1}{2}(e^{\tau}- e^{-\tau}) -\frac{1}{48\varepsilon^3} \left(e^{3 \tau}- e^{-3\tau}\right)-\frac{3}{16\varepsilon^3}\left(e^{\tau}- e^{-\tau}\right),
	\end{split}
\end{align}
is a homoclinic solution to \eqref{e.050301} with initial condition $(X(0),Y(0),\theta(0))=(\sqrt{2},0,0)$. 

\vskip0.05in

Fixing an arbitrary phase $\theta_0\in \mb R$ and introducing the new variables $(x,y,\Theta)$ by letting 
\begin{align}\label{e.050407}
	x= X-a, \, y= Y-b, \,  \Theta = \theta - \theta_0-\psi,
\end{align}
we obtain from \eqref{EqnXYU1} the equation for solutions around the homoclinic orbit as 
\begin{equation}\label{eqnxyS}
	\begin{split}
	\frac{d \Theta}{d\tau} &=   \frac{\sqrt{2}  S}{\varepsilon^3 U_{\theta_0, \psi}^3 (x+a)^3 a^3}, \\ 
	\frac{dx}{d\tau} &=  y   + P,\\
	\frac{dy}{d\tau} &=   (1-3a^2) x + Q,
	\end{split}
\end{equation}
with 
\begin{equation}\label{SPQS1}
	\begin{split}
	S = & x^3 + 3a x^2 +3a^2 x + \left(U_{\theta_0, \psi}^3 -1\right)(x+a)^3 + \varepsilon^3 U_{\theta_0, \psi}^3  x (x+a)^3 a^3, \\
	P  = & \sqrt{2} \varepsilon^2 U_{\theta_0, \psi}^2 (x+a)^2   G_{\theta_0, \psi},\\
	Q = &-  x^3 - 3ax^2 - (x+a)F_{\theta_0, \psi} +  \sqrt{2}\varepsilon^2 U_{\theta_0, \psi}^2 (x+a)(y+b) G_{\theta_0, \psi},
	\end{split}
\end{equation}
where $(a,b,\psi)$ denotes the parabolic solution given in \eqref{e.050303}, and $U_{\theta_0, \psi}, F_{\theta_0, \psi}, G_{\theta_0, \psi}$ are obtained from \eqref{Jacobi1} and \eqref{e.050302} through substituting  $X = x+a, Y = y+a, \theta=\Theta + \theta_0 + \psi$.

\section{Existence and Analyticity of Invariant Manifolds}\label{s2}
This section is devoted to proving the existence of invariant manifolds of the saddle point and their analytic dependence on the parameters $(\theta_0,\rho,\ee)$, which implies the analyticity of the splitting distance. Let $\varepsilon_0$ be a small positive number and  assume 
\begin{align}\label{e.051102}
	(\rho, \varepsilon) \in D_{\ee_0}: = (-\varepsilon_0,\varepsilon_0+1/2)\times (0, \varepsilon_0).
\end{align}
This is to say that we impose no restriction on the masses of the primary bodies but assume the Jacobi constant $J < 0$ is such that $|J|$ is large.  {\it We regard $\varepsilon$ as the parameter for perturbation }in this section. 

Let  
\begin{align}\label{e.050402}
	\ell^+ = \{ (a(\tau), b(\tau)): \ \ \tau \in [0, \ + \infty) \} 
\end{align}
be the positive part of the homoclinic solution $(a(\tau), b(\tau))$ on the plane and 
$D_{\ell}^+$ be a small neighborhood of $\ell^+ \cup (0, 0)$. 
We also use  $I$ to denote a small segment of the $X$-axis centered at $a(0) = \sqrt{2}$ and let $X_0 \in I$ to study the solution $(X(\tau),  Y(\tau), \theta(\tau))$ of equation \eqref{EqnXYU1} satisfies the initial condition $(X(0), Y(0),  \theta(0)) = (X_0, 0, \theta_0)$, which is equivalent to the solution of \eqref{eqnxyS} with initial condition 
\begin{equation}\label{iniS}
	(x(0), y(0), \Theta(0)) = (X_0-a(0), 0, 0).
\end{equation}

 \begin{definition}
 	We say that a solution $(x(\tau), y(\tau), \Theta(\tau)), \ \tau \in [0, +\infty)$ of equation \eqref{eqnxyS} satisfying \eqref{iniS} is a primary stable solution if 	 
	\begin{itemize}
		\item[(i)]$(x(\tau), y(\tau)) \in {\mathcal D}_{\ell}^+$ for all $\tau \in [0, +\infty)$; 
		\item[(ii)]  $(x(\tau), y(\tau)) \to (0, 0)$ as $\tau \to +\infty$.
	\end{itemize}
\end{definition}

The main result of this section is 
\begin{proposition}\label{props2.1}
	There exists an $\varepsilon_0 > 0$ such that for  any given 
	$$
	(\theta_0, \rho, \varepsilon)\in \mb R\times D_{\ee_0},
	$$
	system \eqref{eqnxyS} admits a unique primary stable solution. Moreover, the family of solutions is real analytic on $\mb R\times D_{\ee_0}$ with respect to $\theta_0, \rho, \varepsilon$.
\end{proposition}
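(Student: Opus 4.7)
The plan is to recast the primary stable solution as the unique fixed point of a contraction operator $\mathcal{T}$ on a weighted Banach space of continuous functions on $[0, +\infty)$, and then to read off analytic dependence on $(\theta_0, \rho, \varepsilon)$ from the parametrized analytic dependence of $\mathcal{T}$.

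First I would analyze the linear variational equation of the unperturbed $(x,y)$ flow along the homoclinic: $\dot x = y$, $\dot y = (1 - 3a^2(\tau))\, x$. Its exponentially decaying fundamental solution is $\phi_1(\tau) = (\dot a(\tau), \dot b(\tau))$, and reduction of order produces a linearly independent exponentially growing $\phi_2(\tau)$; together they yield an explicit Green's function $K(\tau, s)$ selecting the decaying (stable) mode at $+\infty$. Independently, I would apply the analytic implicit function theorem to \eqref{Jacobi1} to obtain $U = U(x+a, y+b, \Theta + \theta_0 + \psi;\rho,\varepsilon)$ analytically with $U - 1 = O(\varepsilon^3)$; this is possible because the right-hand side of \eqref{Jacobi1} has $U$-derivative $1$ at $U = 1, \varepsilon = 0$.

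Next I would convert \eqref{eqnxyS} into an equivalent coupled integral system. For $(x,y)$, variation of parameters gives
\begin{equation*}
\begin{pmatrix} x(\tau) \\ y(\tau) \end{pmatrix} = \kappa\, \phi_1(\tau) + \int_\tau^{+\infty} K(\tau, s) \begin{pmatrix} P(s) \\ Q(s) + 3a^2(s)\, x(s) \end{pmatrix} ds,
\end{equation*}
where the upper limit $+\infty$ kills the growing mode and $\kappa \in \mathbb{R}$ is a free parameter. For $\Theta$ I would integrate directly,
\begin{equation*}
\Theta(\tau) = \int_0^\tau \frac{\sqrt{2}\, S(s)}{\varepsilon^3 U^3(s)\, (x(s)+a(s))^3\, a(s)^3}\, ds.
\end{equation*}
Choosing weights that enforce $(x,y) = O(e^{-\tau})$ as $\tau\to+\infty$ and a compatible weight on $\Theta$, the perturbation estimates $F = O(\varepsilon^4)$, $G = O(\varepsilon^2)$ imply that for $\varepsilon_0$ small the right-hand side is a uniform contraction on a small ball, with contraction constant bounded away from $1$ uniformly in $(\kappa, \theta_0, \rho, \varepsilon)$. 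The scalar condition $y(0) = 0$ is then solved for $\kappa$ via the analytic implicit function theorem, using that the linear part gives $y(0) = \dot b(0)\,\kappa + O(\varepsilon^2) = -\sqrt{2}\,\kappa + O(\varepsilon^2)$. Since every ingredient ($U$, $P$, $Q$, $S$, $K$) depends analytically on $(\theta_0, \rho, \varepsilon)$ in a complex neighborhood, the analytic uniform contraction principle delivers a fixed point that is real analytic on $\mathbb{R} \times D_{\varepsilon_0}$.

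The hard part will be arranging the two competing asymptotic regimes within a single norm: the factor $\varepsilon^{-3}$ in the $\Theta$ equation coexists with the growth of $a(\tau)^{-3}$ as $\tau \to +\infty$. Using the algebraic identity
\begin{equation*}
\frac{\sqrt{2}\, S}{\varepsilon^3 U^3 (x+a)^3 a^3} = \sqrt{2}\, x + \frac{\sqrt{2}}{\varepsilon^3}\left(\frac{1}{a^3} - \frac{1}{U^3(x+a)^3}\right),
\end{equation*}
together with $U - 1 = O(\varepsilon^3)$ and the expected smallness $x = O(\varepsilon^2)$, one sees that the nominal $\varepsilon^{-3}$ singularity is tempered but must still be controlled by a careful choice of weight compatible with the decay of $a(\tau)$. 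The framework of \cite{CW}, referenced in the introduction, is designed precisely to balance such competing orders of singularity via weighted integral equations, and I would follow that blueprint to set up the Banach space and verify the contraction estimates.
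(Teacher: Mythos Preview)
Your overall strategy---fixed point of a contraction on a weighted space of functions on $[0,+\infty)$, with analyticity read off from analytic dependence of the operator---is exactly the paper's. The execution differs in two respects. First, instead of a Green's function plus a free shooting parameter $\kappa$ (and an implicit-function step to enforce $y(0)=0$), the paper passes to explicit coordinates $(M,W)$ diagonalizing the variational flow along the homoclinic (equations \eqref{xyMW}--\eqref{MWxy}) and writes $M(\tau)=-a^{-1}\int_\tau^{+\infty}(b'P+bQ)\,d\tau$ directly. This builds the stable boundary condition into the integral equation and determines $X_0$ a posteriori, so no extra parameter or second fixed-point argument is needed.

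Second, and this is where your sketch has a genuine gap: the bounds $F=O(\varepsilon^4)$, $G=O(\varepsilon^2)$ by themselves do \emph{not} close the contraction, because the Lipschitz constant in the $\Theta$-direction picks up a factor $a^{-3}$ from the weight on $\Theta$, and this must be absorbed by the integrand. The paper handles this by a further rescaling $(\mathbb M,\mathbb W)=\varepsilon^{-7/2}a^{-1}(M,W)$ and then expands $S,P,Q$ carefully enough (Proposition~\ref{props2.2}) to separate the $\Theta$-independent pieces from the $\Theta$-dependent ones, showing that the latter carry strictly higher powers of $a$: in the integrand for $\mathbb M$ the $\Theta$-dependent part is $O(a^5)$, and similarly for $\mathbb W$ and for $\Theta$ itself. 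Only with this extra structure does $|\partial_\Theta(\cdots)|\cdot a^{-3}$ remain integrable and the iteration contract with rate $K\sqrt\varepsilon$ (Lemma~\ref{lem2.7}). Your proposal flags this as ``the hard part'' and defers to \cite{CW}, but the specific fact you need---that the $\Theta$-dependence of $P$ and $Q$ enters only through terms with enough extra factors of $a$---is the crux and must be verified, not assumed. The algebraic identity you wrote for $S$ is a first step but does not by itself give this.

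A minor slip: in your variation-of-parameters formula the inhomogeneity should be $(P,Q)$, not $(P,Q+3a^2 x)$; the $-3a^2 x$ term is already absorbed in the variational operator whose fundamental solutions $\phi_1,\phi_2$ you are using.
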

\begin{remark}
	Let ${\mathcal D}^-_{\ell}$ be a small neighborhood around the negative part $\ell^-$ of $(a(\tau), b(\tau))$. We can also define {\it primary unstable solutions} as solutions that stay in ${\mathcal D}_{\ell}^-$ for all $\tau \in (-\infty, 0]$ and approach $(0,0)$ as $\tau\to-\infty$.  Changing $+\infty$ to $-\infty$ all the way, the existence and analyticity for primary stable solutions can be repeated {\it verbatim} for the primary unstable solutions. 
\end{remark}
In subsection \ref{s2.1}, we apply a recent theory from \cite{CW} on perturbed Duffing equations to derive integral equations for the primary stable solutions. This approach enables us to effectively capture the singularities of the system and to prove Proposition \ref{props2.1} via an iteration scheme in subsection \ref{s2.3}.

\subsection{Canonical Coordinates and Integral Equations} \label{s2.1}
To derive integral equations for the primary stable solutions, following  \cite{CW},  we let  
\begin{equation}\label{ht}
h(\tau) =  \frac{3(e^{2\tau} - e^{-2\tau} + 4\tau)}{2(e^\tau+ e^{-\tau})^2}, 
\end{equation}
and 
\begin{equation}\label{RHtildeH}
H(\tau) =  \frac{1}{a(\tau)}\left[b(\tau)h(\tau) + a(\tau)\right], \ \ \ \ \ \ \ \widetilde{H}(\tau) =  \frac{1}{a(\tau)}\left[b'(\tau)h(\tau) + 2b(\tau)\right].
\end{equation} 
We note that $h(\tau), H(\tau), \widetilde{H}(\tau)$ are uniformly bounded functions for all real $\tau$, $h(\tau)$ and $\widetilde{H}(\tau)$ are odd, but $H(\tau)$ is even in $\tau$. In addition, $h(\tau)$ solves 
\begin{equation}\label{hHtildeH}
h' - \frac{2b}{a} h - 3 = 0. 
\end{equation}

Following the design of \cite{CW},   we let $M, W$ be such that 
\begin{equation}\label{xyMW}
M =  \frac{1}{a}\left(b' x  -b y\right),  \ \  
W =  \left(\widetilde{H} x -H  y\right).
\end{equation}
We have, in reverse, 
\begin{equation}\label{MWxy}
x =  \frac{1}{a}\left(b   W - aH  M\right), \ \  
y  = \frac{1}{a}\left(b' W -a\widetilde{H}  M\right).
\end{equation}
New variables $M, W$ are designed to transform  the equations of the first variations of the unperturbed Duffing equation, that is,
$$
\frac{d x}{d \tau} = y, \ \ \ \ \frac{dy}{d\tau} = (1-3a^2(\tau)) x,
$$
to 
$$
\frac{d M}{d \tau} = -\frac{b(\tau)}{a(\tau)}M, \ \ \ \ \frac{d W}{d\tau} = \frac{b(\tau)}{a(\tau)} W.
$$
\begin{lemma}
	Equation \eqref{eqnxyS} in  $M, W, \Theta$ is transformed to 
	\begin{equation}\label{EqnMWTheta}
	\begin{split}
		\frac{d \Theta}{d\tau}  =&   \frac{\sqrt{2}  S}{\varepsilon^3 U_{\theta_0, \psi}^3 (x+a)^3 a^3},\\
		\frac{dM}{d\tau}  = & -\frac{b}{a}M + \frac{1}{a}\left(b' P     + b Q\right),\\
		\frac{dW}{d\tau}=& \frac{b}{a} W   +  \widetilde{H} P -HQ
		\end{split}
	\end{equation}
where on the right-hand side, $S, P, Q$ are as in \eqref{SPQS1}, in which we need to further change from $x, y$ to $M, W$ by using \eqref{MWxy}.
\end{lemma}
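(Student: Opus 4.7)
The $\Theta$ equation in \eqref{EqnMWTheta} is identical to the first equation of \eqref{eqnxyS}, so only the $M$ and $W$ components need verification. The plan is a direct substitution: differentiate the defining relations \eqref{xyMW} in $\tau$, eliminate $x'$ and $y'$ using \eqref{eqnxyS}, and verify that the terms linear in $x,y$ (without $P,Q$) collapse into $-(b/a)M$ and $(b/a)W$ respectively, so that only the perturbation contributions survive.

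For the $M$ equation I differentiate $aM = b'x - by$ using $a' = b$, $x' = y + P$, and $y' = (1 - 3a^2)x + Q$. The terms linear in $x$ (without $P,Q$) combine into $(b'' - b(1 - 3a^2))x$, which vanishes because differentiating $b' = a - a^3$ once along $a' = b$ yields $b'' = b(1 - 3a^2)$. The $y$-terms cancel outright, and after dividing by $a$ and collecting the remaining $P, Q$ contributions one obtains the second line of \eqref{EqnMWTheta}.

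For the $W$ equation I differentiate $W = \widetilde H x - H y$ and substitute \eqref{eqnxyS}, which yields
\[
W' = \bigl(\widetilde H' - (1 - 3a^2)H\bigr) x + \bigl(\widetilde H - H'\bigr) y + \widetilde H P - H Q.
\]
Matching the right-hand side with $(b/a) W + \widetilde H P - H Q$ reduces the claim to the two identities
\[
H' = \widetilde H + \tfrac{b}{a} H, \qquad \widetilde H' - \tfrac{b}{a} \widetilde H = (1 - 3a^2) H.
\]
The first of these follows from the definitions \eqref{RHtildeH} after a short computation that uses $a' = b$ and the ODE \eqref{hHtildeH} for $h$, namely $h' = (2b/a)h + 3$.

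The main obstacle is the second identity. Expanding $\widetilde H'$ by the same recipe, and using in addition $b'' = b(1 - 3a^2)$, produces the claimed right-hand side together with a residue that, after substituting $b' = a - a^3$, reduces to $4 - 2a^2 - 4b^2/a^2$. This expression is \emph{not} a consequence of \eqref{hHtildeH} alone; its vanishing requires the conserved energy $b^2/a^2 = 1 - a^2/2$ of the unperturbed Duffing equation restricted to the homoclinic level $E = 0$. Once this identity is invoked, the residue disappears and \eqref{EqnMWTheta} follows.
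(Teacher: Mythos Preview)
Your proof is correct and follows essentially the same route as the paper: direct differentiation of the defining relations \eqref{xyMW}, followed by substitution of \eqref{eqnxyS} and simplification via the Duffing identities $b'' = (1-3a^2)b$, $h' = (2b/a)h + 3$, and the energy relation $b^2 = a^2 - \tfrac{1}{2}a^4$. The only cosmetic difference is that you first isolate the two coefficient identities for $H'$ and $\widetilde H'$ before expanding, whereas the paper expands $\widetilde H, H$ in terms of $b, b', h$ from the outset and simplifies in one pass; the ingredients and logic are identical.
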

\begin{proof} The equation on $\Theta$ is the same as before. For $M$ we have  
\begin{eqnarray*}
\frac{dM}{d\tau}  & = &  -\frac{b}{a} M + \frac{1}{a}\left(b' x'+b'' x  -b' y-by'\right)\\
& = &  -\frac{b}{a} M + \frac{1}{a}\left(b' (y+P)+b'' x  -b' y-b((1-3a^2) x + Q)\right)\\
& = &  -\frac{b}{a} M + \frac{1}{a}\left(b' P -b  Q\right)
\end{eqnarray*}
where for the last equality we used $b'' = (a-a^3)' = (1-3a^2)b$.

Now for $W$, we have
\begin{eqnarray*}
  \frac{dW}{d\tau}   &=&  \left(\frac{1}{a }\left[b' h  + 2b \right]\right)' x -\left( \frac{1}{a }\left[b h  + a \right]\right)' y + \left(\frac{1}{a }\left[b' h  + 2b \right]\right) x' -\left( \frac{1}{a }\left[b h  + a \right]\right)  y' \\ 
    &=& -\frac{b}{a} W + \frac{1}{a }\left[b''h +b'h'+ 2b'\right] x - \frac{1}{a}\left[b'h+ bh'+ b\right] y  +   \frac{1}{a}\left[b'h + 2b\right]( y   + P )\\
    & &-\frac{1}{a}\left[bh + a\right] ((1-3a^2) x + Q).  
  \end{eqnarray*}
To continue, we use $h' = \frac{2b}{a} h +3$
to obtain
\begin{eqnarray*}
	\frac{dW}{d\tau}   
	&=& -\frac{b}{a} W + \frac{1}{a }\left[b'' h  +b' \left(\frac{2b}{a} h + 3\right)+ 2b' \right] x  - \frac{1}{a }\left[b' h + b  \left(\frac{2b}{a} h + 3\right) + b \right] y \\
	& &   +   \frac{1}{a }\left[b' h  + 2b \right]y  +   \frac{1}{a }\left[b' h  + 2b \right]P -\frac{1}{a }\left[b h  + a \right] (1-3a^2) x  -\frac{1}{a }\left[b h  + a \right] Q\\
	 &=& \frac{b}{a} W   +  \widetilde{H} P -HQ.  
\end{eqnarray*}
Here,  we used
$$
b' = a-a^3, \ \ \ \ b'' = (1-3a^2) b, \ \ \ \  b^2 = a^2 - \frac{1}{2}a^4
$$
for the last equality. 

\end{proof}

\begin{lemma}\label{l.050402}
	The primary stable solutions (parametrized by  $\theta_0$) satisfying $Y(0) = 0, \Theta(0) = 0$ are solutions of the integral equations
	\begin{equation}\label{eqnint1}
	\begin{split}
	\Theta(\tau)  =&  \int_0^{\tau} \frac{\sqrt{2}  S}{\varepsilon^3 U_{\theta_0, \psi}^3 (x+a)^3 a^3} d\tau,\\
	M(\tau)   = & -\frac{1}{a}  \int_{\tau}^{+\infty} \left(b' P     + b Q\right) d\tau, \\
	W(\tau)=& a  \int_0^{\tau}\frac{1}{a}(\widetilde{H} P -HQ) d\tau.
	\end{split}
	\end{equation}
 \end{lemma}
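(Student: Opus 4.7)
The plan is to apply variation of parameters to each of the three scalar ODEs in \eqref{EqnMWTheta} separately, imposing the boundary data appropriate to a primary stable solution: the initial condition \eqref{iniS} at $\tau=0$ and the decay of $(x,y)$ at $\tau=+\infty$. The observation driving every step is the identity $a'(\tau)=b(\tau)$, which is immediate from \eqref{e.050303}. Consequently, both $-b/a$ and $+b/a$ appearing as coefficients in the $M$ and $W$ equations are exact logarithmic derivatives of $1/a$ and of $a$ respectively, which supplies the integrating factors without any computation.

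For $\Theta$, the right-hand side is a pure function of $\tau$ once a solution is fixed, so direct integration from $0$ to $\tau$ together with $\Theta(0)=0$ delivers the first line of \eqref{eqnint1}. For the $M$ equation, $a$ is the integrating factor, turning it into $(aM)' = b'P + bQ$. I would then integrate from $\tau$ to $+\infty$; the required vanishing of $aM$ at $+\infty$ follows from \eqref{xyMW}, since $aM = b'x - by$, both $b$ and $b'$ are $O(e^{-\tau})$, and $x,y$ remain bounded (they in fact tend to $0$ by the primary stable property). This produces the second line of \eqref{eqnint1}.

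For the $W$ equation, $1/a$ is the integrating factor and the equation becomes $(W/a)' = (\widetilde{H}P - HQ)/a$. I then integrate from $0$ to $\tau$, which requires $W(0)=0$. This is where the boundary data at $\tau=0$ must be checked: from \eqref{RHtildeH} and the fact that $h$ is odd (so $h(0)=0$), together with $b(0)=0$ from \eqref{e.050303}, one gets $\widetilde{H}(0)=0$; combined with the imposed $y(0)=0$ from \eqref{iniS}, formula \eqref{xyMW} gives $W(0)=\widetilde{H}(0)x(0) - H(0) y(0) = 0$, as needed. A converse check, that any continuous solution of \eqref{eqnint1} satisfies \eqref{EqnMWTheta} together with the prescribed boundary data, is immediate by differentiation.

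No step here is a genuine obstacle; the only place requiring attention is the justification of the improper integral for $M$, and it reduces to the elementary estimate $|aM|\le |b'|\,|x| + |b|\,|y| = O(e^{-\tau})$ rather than to any sharp decay rate of $x$ or $y$. The real work — producing a solution of the integral system \eqref{eqnint1} by iteration and establishing analytic dependence on $(\theta_0,\rho,\varepsilon)$ — is deferred to subsection \ref{s2.3}, for which this lemma furnishes the functional-analytic formulation.
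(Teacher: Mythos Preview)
Your proof is correct and follows essentially the same variation-of-parameters approach as the paper; the only cosmetic difference is that for $M$ the paper first integrates from $0$ using the initial value $M(0)=-(X_0-a(0))$ and then eliminates the constant via $\lim_{\tau\to+\infty} a(\tau)M(\tau)=0$, whereas you integrate directly from $\tau$ to $+\infty$. Your explicit verification that $W(0)=0$ (from $\widetilde H(0)=0$ and $y(0)=0$) fills in a step the paper simply asserts.
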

\begin{proof}
	With a fixed $\theta_0$, we are interested in the solution of equation \eqref{EqnMWTheta} satisfying
    $$
	\Theta(0) = 0, \ \ \ M(0) =  -(X_0-a(0)), \ \ \ W(0) = 0.
	$$
	Consequently, the solution we are seeking satisfies the integral equations
	\begin{equation*} 
	\begin{split}
	\Theta(\tau)  =&  \int_0^{\tau} \frac{\sqrt{2}  S}{\varepsilon^3 U_{\theta_0, \psi}^3 (x+a)^3 a^3} d\tau,\\
	M(\tau)   = & \frac{1}{a} \left(-\sqrt{2}(X_0-a(0)) + \int_0^{\tau} \left(b' P     + b Q\right) d\tau\right), \\
	W(\tau)=& a  \int_0^{\tau}\frac{1}{a}(\widetilde{H} P -HQ) d\tau.
	\end{split}
	\end{equation*}
	For a primary stable solution, we have $\lim_{\tau \to +\infty} a(\tau) M(\tau) = 0$,
	which implies
	$$
 	\sqrt{2}(X_0-a(0)) = \int_0^{+\infty} \left(b' P     + b Q\right) d\tau. 
	$$
	Substituting it to the integral equation for $M(\tau)$, we obtain 
	$$
		M(\tau)   =  -\frac{1}{a}  \int_{\tau}^{+\infty}  \left(b' P     + b Q\right) d\tau.  
	$$
\end{proof}
\begin{remark}\label{r.050401}
	By a similar calculation, one obtains 
	\begin{align*}
		M(\tau)   =  \frac{1}{a}  \int_{-\infty}^{\tau}  \left(b' P     + b Q\right) d\tau
	\end{align*}
	for primary unstable solutions. 
\end{remark}
Note that, by \eqref{eqnint1}, when a solution enters the vicinity of the saddle fixed point $(X, Y) = (0, 0)$, the singularity order of the function on the right-hand side of the equation for $\Theta$ behaves like $\sim \varepsilon^{-3} a^{-3}(\tau)$ as $\tau \to \infty$. To balance this singularity, the perturbation terms in the equations for $(X, Y)$ must exhibit a sufficiently high order of dependence on $a(\tau)$. This requirement on the perturbation functions is \emph{stricter} than that in McGehee's paper~\cite{Mc}, where real analyticity of the stable and unstable manifolds on phase space was considered. Fortunately, this stronger condition is satisfied in the restricted three-body problem.

\medskip

To prepare for the implementation of the iteration scheme in the next subsection, a refinement of the integral equation \eqref{eqnint1} is needed in order to obtain more precise information about the order of singularity involved. To that end, we introduce an additional change of variables by rescaling. Set
\begin{equation}\label{e.050406}
{\mathbb M} = \frac{1}{\varepsilon^3\sqrt{\varepsilon} a} M, \quad {\mathbb W} = \frac{1}{\varepsilon^3\sqrt{\varepsilon} a} W,
\end{equation}
and 
\begin{equation}\label{e.050408}
	{\mathbb X} =\frac{1}{a}\left(b  {\mathbb W} - aH {\mathbb M}\right), \ \ \ \ {\mathbb Y} = \frac{1}{a}\left(b' {\mathbb W} -a\widetilde{H}  {\mathbb M}\right). 
\end{equation}
Accordingly, all relevant quantities must be rewritten as functions of ${\mathbb M}, {\mathbb W}$, and $\Theta$. We also let
\begin{equation}\label{domainbb}
({\mathbb M}, {\mathbb W}, \Theta) \in {\mathbb D}_{\ell} := [-1, 1] \times [-1, 1] \times {\mathbb R},
\end{equation}
and we will work exclusively within the domain ${\mathbb D}_{\ell}$ for the variables $({\mathbb M}, {\mathbb W}, \Theta)$. The domain for the parameters $(\theta_0,\rho, \varepsilon)$ remains $\mb R\times D_{\ee_0}$ for $D_{\ee_0}$ as in \eqref{e.051102}. We also use ${\mathbb O}(1)$ 
to represent a generic function that is real analytic in ${\mathbb M}, {\mathbb W}, \Theta, \theta_0, \rho, \varepsilon$ on ${\mathbb D}_{\ell} \times \mb R \times D_{\ee_0}$ with a  uniformly bounded  $C^1$-norm   on ${\mathbb D}_{\ell} \times \mb R \times D_{\ee_0}$. 

\begin{proposition} \label{props2.2} 
	For the primary stable solutions, $(\Theta, \mathbb M, \mathbb W)$ satisfies the following integral equations, 
	\begin{equation}\label{eqnint2}
	\begin{split}
	\Theta(\tau)  = {\mathcal F}_{\Theta} : = &\sqrt{2} \sqrt{\varepsilon} \int_0^{\tau}\frac{  {\mathbb S}_1({\mathbb X}, {\mathbb Y})}{   (\ee^3\sqrt{\ee}  {\mathbb X}+1)^3 a^3} d\tau + \sqrt{2} \sqrt{\varepsilon} \int_0^{\tau}\frac{   a^4 {\mathbb S}({\mathbb X}, {\mathbb Y}, \Theta)}{   (\ee^3\sqrt{\ee}  {\mathbb X}+1)^3 a^3} d\tau,\\
	{\mathbb M}(\tau)   = {\mathcal F}_{\mathbb M} : = &  \frac{\sqrt{\varepsilon}}{a^2 }  \int_{\tau}^{+\infty} ba^3\left( \varepsilon^{10} \sqrt{\varepsilon} {\mathbb X}^3 + 3 \varepsilon^{7}{\mathbb X}^2\right) d\tau \\
	& -\frac{\sqrt{\varepsilon}}{a^2 }  \int_{\tau}^{+\infty} a^4\left(b' {\mathbb P}({\mathbb X}, {\mathbb Y}, \Theta)     + b {\mathbb Q}({\mathbb X}, {\mathbb Y}, \Theta)\right) d\tau,   \\  
	{\mathbb W}(\tau)=  {\mathcal F}_{\mathbb W} : = &  \sqrt{\varepsilon} \int_0^{\tau} H a^2(  \varepsilon^{10} \sqrt{\varepsilon} {\mathbb X}^3 + 3 \varepsilon^{7}{\mathbb X}^2) d\tau\\
	& + \sqrt{\varepsilon} \int_0^{\tau} a^3(\widetilde{H} {\mathbb P}({\mathbb X}, {\mathbb Y}, \Theta) -H{\mathbb Q}({\mathbb X}, {\mathbb Y}, \Theta)) d\tau,
	\end{split}
	\end{equation}
	where ${\mathbb S}_1({\mathbb X}, {\mathbb Y})$ is a polynomial in ${\mathbb X}, {\mathbb Y}$ of uniformly bounded coefficient, which is independent of $\Theta$. 
	For ${\mathbb S}={\mathbb S}({\mathbb X}, {\mathbb Y}, \Theta)$, ${\mathbb P} = 	{\mathbb P}({\mathbb X}, {\mathbb Y}, \Theta)$, ${\mathbb Q} = 	{\mathbb Q}({\mathbb X}, {\mathbb Y}, \Theta)$, we have
	\begin{equation} 
	\begin{split}
	{\mathbb S} &=  \varepsilon^3\sqrt{\ee} a^2 {\mathbb O}(1),\\
	{\mathbb P}  &=  -\frac{3 \sqrt{2}}{2} \rho (1-\rho)   (\varepsilon^3\sqrt{\varepsilon} {\mathbb X}+1)^4  \sin 2 (\Theta +\psi + \theta_0) +\varepsilon^2 a^2 {\mathbb O}(1), \\
	{\mathbb Q} &=   -\frac{3\sqrt{2}}{2}   \rho (1-\rho)  (\varepsilon^3\sqrt{\varepsilon}  {\mathbb X}+1)^3(\varepsilon^3\sqrt{\varepsilon}  {\mathbb Y}+ ba^{-1}) \sin 2(\Theta  + \psi+ \theta_0)  +  a  {\mathbb O}(1).
	\end{split}
	\end{equation}
\end{proposition}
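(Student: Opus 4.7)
The plan is to refine the integral equations \eqref{eqnint1} of Lemma \ref{l.050402} via three substitutions: a Taylor expansion of $F, G$ in powers of $\varepsilon$; an implicit-function solve of the Jacobi relation \eqref{Jacobi1} for $U$; and the rescaling \eqref{e.050406}--\eqref{e.050408}, which yields $x = \varepsilon^3\sqrt{\varepsilon}\,a\mathbb{X}$, $y = \varepsilon^3\sqrt{\varepsilon}\,a\mathbb{Y}$ and hence $X = a(1+\varepsilon^3\sqrt{\varepsilon}\mathbb{X})$.

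First I would expand $R_{13}^{-3}$ and $R_{23}^{-3}$ from \eqref{e.050403} by the binomial series. A direct substitution into \eqref{e.050302} shows that the $O(1)$ and $O(\varepsilon^2)$ contributions to $F$ cancel identically, leaving
\begin{equation*}
F = -\rho(1-\rho)\,\varepsilon^4 U^4 X^4 \bigl(\tfrac{3}{4} + \tfrac{9}{4}\cos 2\theta\bigr) + O(\varepsilon^6 X^6),
\end{equation*}
whereas $G = -\tfrac{3}{2}\,\rho(1-\rho)\,\varepsilon^2 U^2 X^2 \sin 2\theta + O(\varepsilon^4 X^4)$. These identities supply the leading $\sin 2(\Theta+\psi+\theta_0)$ structure that will appear in $\mathbb{P}, \mathbb{Q}$. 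The implicit function theorem applied to \eqref{Jacobi1} at $\varepsilon = 0$ yields a unique real analytic solution $U_{\theta_0,\psi} = 1 + O(\varepsilon^3)$ on $\mathbb{D}_\ell \times \mathbb{R} \times D_{\varepsilon_0}$; the only hypothesis needed is that $R_{13}, R_{23}$ remain uniformly bounded above and away from $0$ on the domain, which holds for $\varepsilon_0$ small (even as $\rho$ approaches $0$ or $1$).

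With the rescaling inserted into \eqref{SPQS1}, I process each integral in \eqref{eqnint1} in turn. For the $\Theta$ integrand I split $S = [x^3+3ax^2+3a^2x] + (U^3-1)(x+a)^3 + \varepsilon^3 U^3 x (x+a)^3 a^3$: dividing the polynomial bracket by $(x+a)^3 a^3 = a^6(1+\varepsilon^3\sqrt{\varepsilon}\mathbb{X})^3$ and extracting the prefactor $\sqrt{2}\sqrt{\varepsilon}$ yields the $\theta$-independent polynomial $\mathbb{S}_1(\mathbb{X},\mathbb{Y})$, while the remaining two terms (using $U^3-1 = O(\varepsilon^3)$) contribute to $\mathbb{S}$ with the stated $\varepsilon^3\sqrt{\varepsilon}\,a^2\,\mathbb{O}(1)$ order. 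For the $\mathbb{M}, \mathbb{W}$ integrals I split $Q = (-x^3-3ax^2) + Q'$ with $Q' = -(x+a)F + \sqrt{2}\varepsilon^2 U^2 (x+a)(y+b) G$: the polynomial piece produces the explicit $\mathbb{X}^3, \mathbb{X}^2$ integrals in $\mathcal{F}_\mathbb{M}, \mathcal{F}_\mathbb{W}$, while the $P$ and $Q'$ pieces, normalized by the common factor $\varepsilon^4 a^4$, define $\mathbb{P}, \mathbb{Q}$. The identity $(x+a)(y+b) = a^2(1+\varepsilon^3\sqrt{\varepsilon}\mathbb{X})(b/a + \varepsilon^3\sqrt{\varepsilon}\mathbb{Y})$ together with the boundedness of $b/a = -\tanh\tau$ produces the factor $(\varepsilon^3\sqrt{\varepsilon}\mathbb{Y} + b/a)$ in $\mathbb{Q}$.

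The main difficulty is book-keeping rather than conceptual: tracking powers of $\varepsilon$ and factors of $a$ through each substitution so that the explicit formulas match, and verifying that every $\mathbb{O}(1)$ factor is real analytic with uniformly bounded $C^1$-norm on $\mathbb{D}_\ell \times \mathbb{R} \times D_{\varepsilon_0}$. The analyticity in $\rho$ across the full extended range follows from that of $R_{13}^{\pm 1}, R_{23}^{\pm 1}$ on this domain; the exponential decay of $a(\tau)$ as $\tau \to \infty$ keeps the integrands integrable despite the $a^{-2}$ prefactor in the $\mathbb{M}$ integral.
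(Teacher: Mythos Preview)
Your overall strategy matches the paper's (binomial expansion of $R_{13}^{-k},R_{23}^{-k}$, implicit solve of the Jacobi relation for $U$, then the rescaling), and your treatment of $P$ and $Q$ is essentially correct. The gap is in your decomposition of $S$.

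You propose to put only the polynomial bracket $x^3+3ax^2+3a^2x$ into $\mathbb S_1$ and to place the ``remaining two terms'' $(U^3-1)(x+a)^3$ and $\varepsilon^3 U^3 x(x+a)^3 a^3$ entirely into $a^4\mathbb S$, citing $U^3-1=O(\varepsilon^3)$. But that bound is far too crude to yield $\mathbb S=\varepsilon^3\sqrt{\varepsilon}\,a^2\,\mathbb O(1)$: with merely $U-1=O(\varepsilon^3)$ one gets $(U^3-1)(x+a)^3\sim \varepsilon^3 a^3$, which after division by $a^7\varepsilon^3\sqrt{\varepsilon}$ is $\sim a^{-4}\varepsilon^{-1/2}$, unbounded as $\tau\to\infty$. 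Even after noting the crucial homoclinic identity $a^2-b^2-\tfrac12 a^4=0$ (which you do not mention, and which forces $U_3:=X^2-Y^2-\tfrac12X^4 = a\varepsilon^3\sqrt{\varepsilon}\,\mathbb O(1)$ in the rescaled variables), the leading $\Theta$-independent piece $3U_3\varepsilon^3(x+a)^3\sim a^4\varepsilon^6\sqrt{\varepsilon}$ still fails the $a^9\varepsilon^7$ threshold required for inclusion in $a^7\varepsilon^3\sqrt{\varepsilon}\,\mathbb S$. The same remark applies to $\varepsilon^3 U^3 x(x+a)^3a^3\sim \varepsilon^6\sqrt{\varepsilon}\,a^7\mathbb X$, and to the factor $U^{-3}$ in the denominator, which you do not address.

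The fix, carried out in the paper, is to expand $U^{\pm 3}$ explicitly as $1\pm 3U_3\varepsilon^3+\cdots+3U_7\varepsilon^7+\cdots$ with $U_3$ $\Theta$-independent and the first $\Theta$-dependent contribution $U_7\sim a^6$ appearing only at order $\varepsilon^7$. All $U_3$-terms (and the leading part of $\varepsilon^3U^3x(x+a)^3a^3$) then go into $\mathbb S_1$, which is why $\mathbb S_1$ depends on both $\mathbb X$ and $\mathbb Y$; only the $U_7$ and $X^8O(1)\varepsilon^9$ pieces, genuinely of size $a^9\varepsilon^7\mathbb O(1)$, are placed in $\mathbb S$. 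Without this finer split the stated bound on $\mathbb S$ is false, and with it the contraction estimate for $\Theta$ in the subsequent iteration would fail.
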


The proof of Proposition \ref{props2.2} follows from the binomial expansion and equation \eqref{eqnint1}, which, although tedious, is conceptually straightforward. We postpone the proof to Appendix \ref{s2.4} to avoid interrupting the flow of presentation with a purely computational argument. 

\medskip

Note that the right-hand side of each of the three integral equations in Proposition \ref{props2.2} contains two integrals, the first of which is independent of $\Theta$. We emphasize that the order of the integral functions in $a$ is always higher for the second integral than for the first.  As we will see shortly, the factors $a^4 b'$ and $a^4 b$ in the second integral for ${\mathbb M}$, and the factor $a^3$ in the second integral for ${\mathbb W}$ are the bare minimum for  the singularity of $\Theta$ as $\tau \to \infty$ to reach a desirable balance.

\subsection{Existence and Analytical Dependence   on $\theta_0, \rho, \varepsilon$} \label{s2.3}
We are now in a position to prove Proposition \ref{props2.1} by applying an iteration argument to the integral equations derived in Proposition \ref{props2.2}. In what follows, $K$ denotes a generic constant that is independent of $\theta_0$, $\rho$ and $\varepsilon$, whose exact value may vary from line to line. 

\vskip0.05in

Denote  ${\mathcal V}(\tau) = ({\mathbb M}(\tau), {\mathbb W}(\tau), \Theta(\tau))$ and let 
\begin{equation}
\|{\mathcal V}\| = \sup_{\tau \in [0, +\infty)} |{\mathbb M}(\tau)|+ \sup_{\tau \in [0, +\infty)} |{\mathbb W}(\tau)|+ \sup_{\tau \in [0, +\infty)}  a^3(\tau) |\Theta(\tau)|. 
\end{equation}

Let 
${\mathcal F}$ be such that 
\begin{equation}
{\mathcal F}({\mathcal V}) : = \left({\mathcal F}_{\Theta}({\mathcal V}), \ {\mathcal F}_{\mathbb M}({\mathcal V}), \ {\mathcal F}_{\mathbb W}({\mathcal V}) \right),
\end{equation}
where ${\mathcal F}_{\Theta}, \ {\mathcal F}_{\mathbb M}, \  {\mathcal F}_{\mathbb W}$ are as in \eqref{eqnint2}. We   define ${\mathcal V}_n = ({\mathbb M}_n(\tau), {\mathbb W}_n(\tau), \Theta_n(\tau))$ inductively by letting
\begin{equation}
{\mathcal V}_{n+1} = {\mathcal F}({\mathcal V}_n),
\end{equation}
and we initiate this iteration by letting $\Theta_0 \equiv 0$. The initial functions ${\mathbb M}_0(\tau), 	{\mathbb W}_0(\tau)$  are obtained by setting $\Theta = 0$, $\varepsilon = 0$ in ${\mathbb P}, {\mathbb Q}$ in the integrals for ${\mathbb M}, {\mathbb W}$ in \eqref{eqnint2}. We have 
\begin{equation}
	\begin{split}
	{\mathbb M}_0(\tau)   = &   \frac{3 \sqrt{2}}{2} \rho (1-\rho)  \frac{\sqrt{\varepsilon}}{a^2 } \int_{\tau}^{+\infty} a^4 (b'        + b^2 a^{-1}) \sin 2(\theta_0 + \psi)   d\tau,  \\
	{\mathbb W}_0(\tau)  = &   -\frac{3 \sqrt{2}}{2} \rho (1-\rho)\sqrt{\varepsilon} \int_0^{\tau} a^3\left( \widetilde{H} -ba^{-1}  H\right) \sin 2(\theta_0 + \psi)    d\tau.
    \end{split}
\end{equation}

Proposition \ref{props2.1} is a consequence of the following lemma. 
\begin{lemma}\label{lem2.7}
	We have, for all $n \geq 1$, 
	\begin{eqnarray*}
		\|{\mathcal V}_{n+1}(\tau)- {\mathcal V}_n(\tau) \|  
		& \leq &  K \sqrt{\varepsilon}\ \|{\mathcal V}_{n}(\tau)- {\mathcal V}_{n-1}(\tau)\|.
	\end{eqnarray*}
\end{lemma}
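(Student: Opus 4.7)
The plan is to bound $\mathcal{F}(\mathcal{V}_n) - \mathcal{F}(\mathcal{V}_{n-1})$ componentwise by applying the mean-value inequality to the analytic kernels $\mathbb{S}_1, \mathbb{S}, \mathbb{P}, \mathbb{Q}$ of Proposition~\ref{props2.2}, and then checking that in each case the $a$-weights and outer prefactors extract a uniform factor $\sqrt{\varepsilon}$. Two pointwise Lipschitz facts are used throughout: from \eqref{e.050408} together with the uniform boundedness of $b/a,\, b'/a,\, H,\, \widetilde{H}$ on $[0,+\infty)$ one has
\[
|\mathbb{X}_n(s) - \mathbb{X}_{n-1}(s)| + |\mathbb{Y}_n(s) - \mathbb{Y}_{n-1}(s)| \le K\|\mathcal{V}_n - \mathcal{V}_{n-1}\|,
\]
while the weighted $\Theta$-norm gives $|\Theta_n(s) - \Theta_{n-1}(s)| \le a^{-3}(s)\|\mathcal{V}_n - \mathcal{V}_{n-1}\|$. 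The $C^1$ regularity of $\mathbb{S}_1, \mathbb{S}, \mathbb{P}, \mathbb{Q}$ on $\mathbb{D}_\ell \times \mathbb{R} \times D_{\varepsilon_0}$ then converts these into bounds on the integrand differences.

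For the $\Theta$-component, the first integrand in $\mathcal{F}_\Theta$ is $\Theta$-independent, so its difference is at most $K\sqrt{\varepsilon}\|\mathcal{V}_n - \mathcal{V}_{n-1}\|/a^3(s)$. Multiplication by $a^3(\tau)$ and the elementary estimate $a^3(\tau)\int_0^\tau a^{-3}(s)\,ds \le K$ on $[0,+\infty)$ (valid since $a \sim e^{-\tau}$) delivers the desired $K\sqrt{\varepsilon}\|\mathcal{V}_n - \mathcal{V}_{n-1}\|$ bound. The second integrand carries an extra $a^4(s)$ that more than cancels the $a^{-3}(s)$ coming from the $\Theta$-derivative of $\mathbb{S} = \varepsilon^{7/2}a^2\mathbb{O}(1)$, so it contributes only at a higher power of $\varepsilon$.

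For the $\mathbb{M}$-component, the weights $a^4|b'|$ and $a^4|b|$ are of order $a^5$, which absorbs the $a^{-3}(s)$ singularity of $|\Theta_n - \Theta_{n-1}|$ coming from the $\sin 2(\Theta + \psi + \theta_0)$ terms in $\mathbb{P}, \mathbb{Q}$, leaving $a^2(s)$ inside the integrand. Combined with the prefactor $a^{-2}(\tau)$ and the tail estimate $\int_\tau^\infty a^2(s)\,ds \le K a^2(\tau)$, the $\sqrt{\varepsilon}$ emerges exactly. The $\mathbb{W}$-component is the tightest of the three: its weights $a^3\widetilde{H},\, a^3 H$ are only of order $a^3(s)$, and the naive Lipschitz bound on the $\sin$-difference is not integrable against this weight on $[0, +\infty)$. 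For this piece I would split off the main sinusoidal contribution from the $\varepsilon^2 a^2 \mathbb{O}(1)$ remainders in $\mathbb{P}, \mathbb{Q}$ (the latter are handled as in the $\mathbb{M}$-case) and integrate by parts in the main part, using that $\psi'(\tau)$ grows like $\varepsilon^{-3}\cosh 3\tau$ while $\Theta'_n$ stays inductively much smaller; the boundary term then picks up a factor $1/\psi' = O(\varepsilon^3 a^3)$ and the new integrand gains additional exponential decay.

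The main obstacle is making this last integration-by-parts step uniform in the iteration: one needs the inductive estimate $|\Theta_n'(\tau)| \ll |\psi'(\tau)|$, control of the derivatives of $1/\psi'$ on the whole half-line, and a check that the $\mathbb{O}(1)$ terms in $\mathbb{P}, \mathbb{Q}$ (which are $C^1$-bounded over the unbounded $\Theta$-range of $\mathbb{D}_\ell$) do not destroy the high-frequency cancellation. Once the $\mathbb{W}$-estimate is secured, summing the three component estimates yields $\|\mathcal{V}_{n+1} - \mathcal{V}_n\| \le K\sqrt{\varepsilon}\|\mathcal{V}_n - \mathcal{V}_{n-1}\|$ as asserted. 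This is where the paper's design --- the explicit high $a$-orders in the kernels of Proposition~\ref{props2.2}, which were flagged immediately after Lemma~\ref{l.050402} as the bare minimum needed to balance the $\varepsilon^{-3}a^{-3}$ singularity of $\Theta'$ --- does the real work.
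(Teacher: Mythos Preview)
Your treatment of the $\Theta$- and $\mathbb{M}$-components matches the paper's proof essentially line for line: the paper also separates the $\Theta$-independent kernel $\mathbb{S}_1$ from the $\Theta$-dependent $\mathbb{S}$, invokes $a^3(\tau)\int_0^\tau a^{-3}\,ds \le K$, and for $\mathbb{M}$ decomposes $\mathbb{P}_n - \mathbb{P}_{n-1}$ via the mean-value theorem into three pieces $(I),(II),(III)$, noting that $|(III)| \le K a^{-3}\|\mathcal{V}_n - \mathcal{V}_{n-1}\|$ and that the weights $a^4|b'|, a^4|b| \sim a^5$ reduce the final bound to $a^{-2}(\tau)\int_\tau^\infty(|b'a|+|ba|)\,ds \le K$.

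Where you diverge from the paper is the $\mathbb{W}$-component. The paper dispatches it in one sentence (``Estimates on $|\mathbb{W}_{n+1}-\mathbb{W}_n|$ are similar and hence omitted''), whereas you correctly observe that the same bookkeeping yields an integrand of size $a^3\cdot a^{-3}=1$ on $[0,\tau]$, which is not uniformly integrable; the leading $\sin 2(\Theta+\psi+\theta_0)$ pieces of $\widetilde H\mathbb{P}-H\mathbb{Q}$ combine to a coefficient $\widetilde H - (b/a)H = -\tfrac12 a^2 h + b/a$, which does \emph{not} decay. So the paper's ``similar'' claim genuinely glosses over a point. Your proposed remedy---integration by parts against the fast phase $\psi$, using $1/\psi'=O(\varepsilon^3 a^3)$---is a plausible oscillatory-integral strategy, but as you yourself flag, it requires an inductive bound $|\Theta_n'|\ll|\psi'|$ that is not yet established, plus uniform control of the differentiated amplitude $a^3(\Theta_n-\Theta_{n-1})/(\Theta_*'+\psi')$. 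Thus you have identified a real soft spot in the paper's argument, but your fix remains a sketch rather than a complete proof; the paper, for its part, supplies no additional argument here either.
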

\medskip
\begin{proof}[Proof of Proposition \ref{props2.1}]
	The sequence ${\mathcal V}_n = ({\mathbb M}_n, {\mathbb W}_n,  \Theta_n)$ as constructed in the above is  a sequence of real analytic functions of $\theta_0, \rho, \varepsilon$ on $\mb R\times D_{\ee_0}$. By Lemma \ref{lem2.7}, this real analytical sequence is a normal family, offering a   unique limit that is also real analytic on  $\mb R\times D_{\ee_0}$.

\end{proof}

The rest of the subsection is devoted to the proof of Lemma \ref{lem2.7}.

\begin{proof}[Proof of Lemma \ref{lem2.7}]
We start with the $\Theta$ component. We have 
\begin{align*}
	a^3|\Theta_{n+1}(\tau)- \Theta_{n}(\tau)|&\leq   \sqrt{2}\sqrt{\varepsilon}a^3(\tau)  \int_0^{\tau}\left|\frac{  {\mathbb S}_1({\mathbb X}_{n}, {\mathbb Y}_{n}) }{   (\varepsilon^3\sqrt{\varepsilon}  {\mathbb X}_{n}+1)^3 a^3}- \frac{  {\mathbb S}_1({\mathbb X}_{n-1}, {\mathbb Y}_{n-1}) }{   (\varepsilon^3\sqrt{\varepsilon}  {\mathbb X}_{n-1}+1)^3 a^3}\right| d\tau \\ 
	&  \quad + \sqrt{2} \sqrt{\varepsilon} a^3(\tau)  \int_0^{\tau}\left|\frac{  a {\mathbb S}({\mathbb X}_{n}, {\mathbb Y}_{n}, \Theta_{n})}{   (\varepsilon^3\sqrt{\varepsilon}{\mathbb X}_{n}+1)^3} - \frac{  a  {\mathbb S}({\mathbb X}_{n-1}, {\mathbb Y}_{n-1}, \Theta_{n-1})}{   (\varepsilon^3\sqrt{\varepsilon}  {\mathbb X}_{n-1}+1)^3 } \right|d\tau \\
	&\hspace*{-1em}\leq K \sqrt{\varepsilon}a^3(\tau)\int_0^{\tau} a^{-3} d\tau \cdot \left(\sup_{\tau \in [0, +\infty)} |{\mathbb X}_n - {\mathbb X}_{n-1}| +   \sup_{\tau \in [0, +\infty)} |{\mathbb Y}_n - {\mathbb Y}_{n-1}|\right)\\ 
	&  + K \sqrt{\varepsilon} a^3(\tau) \int_0^{\tau} a^{-2} d\tau  \cdot \left( \sup_{\tau \in [0, +\infty)} a^3 |\Theta_{n} - \Theta_{n-1}|\right) \\
	& \leq  K \sqrt{\varepsilon} \|{\mathcal V}_{n} - {\mathcal V}_{n-1}\|.
\end{align*}
Here, it is critically important that we separate the terms depending on $\Theta$ from the ones that do not. Our estimate is hinged on the factor $a^4$ in front of ${\mathbb S}({\mathbb X}, {\mathbb Y}, \Theta)$. 

\vskip0.05in

Regarding $|\mb M_{n+1} - \mb M_n|$, one has 
\begin{eqnarray*}
	\mb M_{n+1}(\tau) - \mb M_{n}(\tau) &= &  \frac{\sqrt{\varepsilon}}{a^2 }  \int_{\tau}^{+\infty} ba^3\left( \varepsilon^{10} \sqrt{\varepsilon} ({\mathbb X}^3_n - {\mathbb X}^3_{n-1})+ 3 \varepsilon^{7}({\mathbb X}^2_n-{\mathbb X}^2_{n-1})\right) d\tau \\
	&& -\frac{\sqrt{\varepsilon}}{a^2 }  \int_{\tau}^{+\infty}b'a^4\left( {\mathbb P}_n   - {\mathbb P}_{n-1} \right)d\tau -\frac{\sqrt{\varepsilon}}{a^2 }  \int_{\tau}^{+\infty} ba^4\left( {\mathbb Q}_n -  {\mathbb Q}_{n-1} \right) d\tau 
\end{eqnarray*}
where
$$
{\mathbb P}_n =  {\mathbb P}({\mathbb X}_n, {\mathbb Y}_n, \Theta_n), \ \ \ {\mathbb Q}_n =  {\mathbb Q}({\mathbb X}_n, {\mathbb Y}_n, \Theta_n).
$$
We now estimate $|{\mathbb P}_{n} - {\mathbb P}_{n-1}|$ and  $|{\mathbb Q}_{n} - {\mathbb Q}_{n-1}|$. Note that our main concern here is the factor $a^{-2}$ that goes to infinite as $\tau \to \infty$. In order to balance this blow up factor, we need a factor $a^2$ from $b'a^4|{\mathbb P}_{n} - {\mathbb P}_{n-1}|$ and  $ba^4|{\mathbb Q}_{n} - {\mathbb Q}_{n-1}|$ to balance the outside factor $a^{-2}$. Recall that 
\begin{equation} 
\begin{split}
{\mathbb P}({\mathbb X}, {\mathbb Y}, \Theta)  = & -\frac{3 \sqrt{2}}{2} \rho (1-\rho)    \left[(\varepsilon^3\sqrt{\ee} {\mathbb X}+1)^4  \sin 2(\Theta + \psi + \theta_0)  +\varepsilon^2 a^2 {\mathbb O}(1)\right].
\end{split}
\end{equation}
Observe that 
$$
|{\mathbb P}_{n} -{\mathbb P}_{n-1}| = \int_0^1 \frac{d}{ds}{\mathbb P}(s{\mathcal V}_{n} + (1-s) {\mathcal V}_{n-1})ds =  (I) + (II) + (III),
$$ 
where
\begin{eqnarray*}
	(I) &= &  \int_0^1 \partial_{\mathbb M} {\mathbb P}(s{\mathcal V}_{n} + (1-s) {\mathcal V}_{n-1}) ({\mathbb M}_{n} - {\mathbb M}_{n-1})ds,   \\
	(II)  &= &    \int_0^1 \partial_{\mathbb W} {\mathbb P}(s{\mathcal V}_{n} + (1-s) {\mathcal V}_{n-1}) ({\mathbb W}_{n} - {\mathbb W}_{n-1})ds, \\
	(III) & = &  \int_0^1 \partial_{\Theta}{\mathbb P}(s{\mathcal V}_{n} + (1-s) {\mathcal V}_{n-1}) ( \Theta_{n} - \Theta_{n-1}) ds.
\end{eqnarray*}
 We have
\begin{eqnarray*}
	|(I)| &\leq & \int_0^1 \left|\partial_{\mathbb M}  {\mathbb P}(s{\mathcal V}_{n} + (1-s) {\mathcal V}_{n-1}) \right| ds \cdot  \sup_{\tau \in [0, +\infty)} |{\mathbb M}_{n} - {\mathbb M}_{n-1}|\\
	& \leq & K \varepsilon^2    \cdot  \sup_{\tau \in [0, +\infty)} |{\mathbb M}_{n} - {\mathbb M}_{n-1}|. 
\end{eqnarray*}
In parallel,
\begin{eqnarray*}
	|(II)| &\leq &  K \varepsilon^2  \cdot  \sup_{\tau \in [0, +\infty)} |{\mathbb W}_{n} - {\mathbb W}_{n-1}|. 
\end{eqnarray*}

Estimate for (III) is a little different, and it is the place the singularity for $\Theta$ is balanced by the high order of $a$ in ${\mathbb P}$. We have  
\begin{eqnarray*}
	 	|(III)| & = &  \left|\int_0^1   \partial_{\Theta} {\mathbb P}(s{\mathcal V}_{n} + (1-s) {\mathcal V}_{n-1}) ( \Theta_{n} - \Theta_{n-1}) ds \right|\\
	 	 & \leq &  \int_0^1  \left| a^{-3}  \partial_{\Theta} P(s{\mathcal V}_{n} + (1-s) {\mathcal V}_{n-1})\right|  ds \cdot \sup_{\tau \in [0, +\infty)} a^3 | \Theta_{n} - \Theta_{n-1}| \\
	 	  & \leq & K   a^{-3} \cdot \sup_{\tau \in [0, +\infty)} a^3 | \Theta_{n} - \Theta_{n-1}|. 
\end{eqnarray*}
We conclude that 
\begin{equation}
|{\mathbb P}_{n} - {\mathbb P}_{n-1}| \leq K a^{-3} \|{\mathcal V}_n - {\mathcal V}_{n-1}\|.
\end{equation}
One also concludes in parallel lines that for  $|{\mathbb Q}_{n} - {\mathbb Q}_{n-1}|$,  
\begin{equation}
|{\mathbb Q}_{n} - {\mathbb Q}_{n-1}| \leq K a^{-3} \|{\mathcal V}_n - {\mathcal V}_{n-1}\|.
\end{equation}
We again emphasize that these estimations are hinged on the fact that {\it all $\Theta$ dependent terms on the right-hand side  are in order of at least $a^5$} (see the equation for ${\mathbb M}$ in  \eqref{eqnint2}). It follows that 
\begin{equation*}
\begin{split}
|{\mathbb M}_{n+1} - {\mathbb M}_n| \leq & K \sqrt{\varepsilon} \left( \frac{1}{a^2}\int_{\tau}^{\infty} (|b'a| + |ba|)d \tau \right) \|{\mathcal V}_n - {\mathcal V}_{n-1}\|\\
 \leq & K \sqrt{\varepsilon}\|{\mathcal V}_n - {\mathcal V}_{n-1}\|. 
\end{split}
\end{equation*}

Estimates on $|{\mathbb W}_{n+1} - {\mathbb W}_n|$ are similar and hence omitted. The proof is complete. 

\end{proof}

\section{Existence of Transversal Homoclinic Intersections}\label{s3}
In Section \ref{s2}, we treated $\varepsilon$ as the perturbation parameter, placing no restriction on $\rho$. This perspective was effective for constructing primary stable and unstable solutions. However, the splitting distance between the invariant manifolds is {\bf not} real analytic in $\varepsilon$ at $\varepsilon = 0$, which prevents us from expanding it as a power series in $\varepsilon$ for approximation purposes. To address this, in this section we instead assume
$$
0<\rho \ll \varepsilon \ll 1
$$
and regard $\rho$ as the perturbation parameter.

\vskip0.05in

For a fixed pair $(\rho, \varepsilon) \in D_{\ee_0}$, the Jacobi integral with $J = - \varepsilon^{-1}$ defines a three-dimensional invariant surface in the original four-dimensional phase space for the restricted three-body problem of primaries masses   $m_2 = \rho, m_1 = 1-\rho$. For a given $\theta_0 \in {\mathbb R}$, we denote the primary stable solution as 
$$ 
{\mathcal V}^s(\tau, \theta_0, \rho, \varepsilon) = ({\mathbb M}^s(\tau, \theta_0, \rho, \varepsilon), \ {\mathbb W}^s(\tau, \theta_0, \rho, \varepsilon), \Theta^s(\tau, \theta_0, \rho, \varepsilon)), \ \ \tau\in [0, +\infty),
$$ 
and the primary unstable solution as 
$$ 
{\mathcal V}^u(\tau, \theta_0, \rho, \varepsilon) = ({\mathbb M}^u(\tau, \theta_0, \rho, \varepsilon), \ {\mathbb W}^u(\tau, \theta_0, \rho, \varepsilon),\Theta^u(\tau, \theta_0, \rho, \varepsilon)), \ \ \tau \in (-\infty, 0].
$$ 
The family of primary stable solutions parametrized by $\theta_0 \in \mathbb{R}$ forms an immersed two-dimensional manifold, denoted by $W^s$, within the Jacobi integral manifold. Similarly, the primary unstable solutions form an immersed two-dimensional manifold $W^u$.

\vskip0.05in 

The main result of this section is 
\begin{proposition}\label{prop3.1}
	There exists a constant $\varepsilon_1 > 0$, with $\varepsilon_1 \ll \varepsilon_0$ as in Proposition \ref{props2.1}, such that for every fixed $\varepsilon \in (0, \varepsilon_1)$, there exists a corresponding $\rho(\varepsilon) > 0$ sufficiently small so that, for all $\rho \in (0, \rho(\varepsilon))$, the primary stable and unstable manifolds admit transversal homoclinic intersections.
\end{proposition}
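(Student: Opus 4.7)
The plan is to introduce a scalar splitting function whose zeros detect intersections of $W^s$ and $W^u$, expand it in $\rho$ to isolate a Melnikov integral, evaluate that integral explicitly via parity and contour deformation, and close the argument with the analytic implicit function theorem. By construction the primary stable and unstable solutions both satisfy $Y(0)=\Theta(0)=0$ and are parametrized by the same $\theta_0$, so the difference in their common $X$-coordinate at $\tau=0$ measures the splitting. Combining Lemma \ref{l.050402} with Remark \ref{r.050401} gives
$$
\Delta(\theta_0,\rho,\varepsilon):=\sqrt{2}\bigl(X_0^s(\theta_0,\rho,\varepsilon)-X_0^u(\theta_0,\rho,\varepsilon)\bigr)=\int_{-\infty}^{+\infty}\bigl(b'P+bQ\bigr)\,d\tau,
$$
with the integrands evaluated along the corresponding primary solutions. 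Proposition \ref{props2.1} then ensures that $\Delta$ is real analytic in $(\theta_0,\rho,\varepsilon)$ on $\mathbb{R}\times D_{\varepsilon_0}$.

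Since the leading parts of $\mathbb{P},\mathbb{Q}$ carry an explicit $\rho(1-\rho)$ factor (Proposition \ref{props2.2}) and since the primary solutions reduce to the unperturbed homoclinic $(a,b,\psi)$ at $\rho=0$, one may write $\Delta=\rho\,M(\theta_0,\varepsilon)+O(\rho^2)$, where the remainder is uniform in $\theta_0$ by analyticity. Evaluating the leading terms of $\mathbb{P},\mathbb{Q}$ on $\mathbb{X}=\mathbb{Y}=\Theta=0$ and unraveling the rescaling \eqref{e.050406}--\eqref{e.050408}, the Melnikov function becomes
$$
M(\theta_0,\varepsilon)=-\frac{3\sqrt{2}}{2}\int_{-\infty}^{+\infty}\left[b'(\tau)+\frac{b^2(\tau)}{a(\tau)}\right]\sin 2\bigl(\psi(\tau)+\theta_0\bigr)\,d\tau+\varepsilon^{2}(\text{lower order}).
$$
From \eqref{e.050303}, $a$ is even while $b$ and $\psi$ are odd in $\tau$, so $b'+b^2/a$ is even and $\sin 2\psi$ is odd. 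Expanding $\sin 2(\psi+\theta_0)$ and integrating, the odd contribution drops out and one obtains $M(\theta_0,\varepsilon)=C(\varepsilon)\sin 2\theta_0$ with
$$
C(\varepsilon)=-\frac{3\sqrt{2}}{2}\int_{-\infty}^{+\infty}\left[b'+\frac{b^2}{a}\right]\cos 2\psi\,d\tau.
$$

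The hard part is to show that $C(\varepsilon)\neq 0$ for every $\varepsilon\in(0,\varepsilon_1)$. Because $\psi(\tau)$ contains the term $-\frac{1}{48\varepsilon^3}(e^{3\tau}-e^{-3\tau})$, the factor $\cos 2\psi$ oscillates at frequency $O(\varepsilon^{-3})$, so a naive real-variable estimate only yields $|C(\varepsilon)|=o(\varepsilon^N)$ for every $N$. My plan is to push the integration contour into the upper complex $\tau$-plane: $a,b$ are meromorphic with poles at $\tau=i(k+\tfrac12)\pi$, and $\psi$ extends meromorphically with the same pole structure, so the integral is controlled by the nearest singularity at $\tau=i\pi/2$. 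Near this singularity the dominant behavior of $\cos 2\psi$ is $\tfrac12 \exp(c/\varepsilon^3)\cdot \exp(\mathrm{i}\cdot\text{analytic})$, and a stationary phase or residue computation along a suitable contour produces the classical exponentially small asymptotic $C(\varepsilon)\sim K_0\,\varepsilon^{-\alpha}\exp(-c_0/\varepsilon^3)$ for explicit constants $K_0\neq 0$, $c_0>0$, $\alpha>0$. The explicit form of $\psi$ via elementary functions is crucial here, and the concrete computation is deferred to Appendix~\ref{a.051001}.

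Once $C(\varepsilon)\neq 0$ is established, $M(\cdot,\varepsilon)$ has four simple zeros per period, with $\partial_{\theta_0}M(k\pi/2,\varepsilon)=\pm 2C(\varepsilon)\neq 0$. Finally, applying the analytic implicit function theorem to $\Delta(\theta_0,\rho,\varepsilon)/\rho=M(\theta_0,\varepsilon)+\rho\cdot(\text{analytic})$ at $(\theta_0,\rho)=(0,0)$, and choosing $\rho(\varepsilon)$ small enough that the $O(\rho)$ remainder is dominated by $|C(\varepsilon)|$, yields for every $\rho\in(0,\rho(\varepsilon))$ a unique analytic branch $\theta_0=\theta_0(\rho,\varepsilon)$ near $0$ with $\Delta=0$ and $\partial_{\theta_0}\Delta\neq 0$. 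This is precisely a transversal homoclinic intersection of $W^s$ and $W^u$. The condition $\rho\ll\varepsilon$ in the hypothesis reflects the fact that $\rho(\varepsilon)$ must be smaller than the exponentially small quantity $|C(\varepsilon)|$, which is the tradeoff inherent to singular perturbations of this type.
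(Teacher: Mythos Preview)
Your overall architecture---define a scalar splitting function, expand in $\rho$, locate a nondegenerate zero of the Melnikov term, close with the implicit function theorem---matches the paper's. The gap is in how you identify the dominant part of $M(\theta_0,\varepsilon)$.

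You truncated $P,Q$ at leading algebraic order in $\varepsilon$, which isolates only the second harmonic $\sin 2(\psi+\theta_0)$, and pushed everything else into ``$\varepsilon^{2}(\text{lower order})$''. But every integral of the form $\int a^{p}b^{r}e^{\mi q\psi}\,d\tau$ is exponentially small with decay $e^{-|q|/(3\varepsilon^{3})}$ governed by the harmonic number $q$; this is exactly what your own contour argument near $\tau=\mi\pi/2$ would produce. Hence your retained $C(\varepsilon)\sin 2\theta_0$ is of size $e^{-2/(3\varepsilon^{3})}$, while the discarded remainder contains $q=1$ contributions of size $e^{-1/(3\varepsilon^{3})}$, which are exponentially \emph{larger}. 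Algebraic order in $\varepsilon$ and asymptotic size of the oscillatory integral run in opposite directions here, so your truncation is invalid and $M$ is not approximately $C(\varepsilon)\sin 2\theta_0$. Consequently the implicit-function step, which requires $\rho(\varepsilon)$ small relative to $|C(\varepsilon)|$, cannot close: the neglected terms already swamp $C(\varepsilon)$ before $\rho$ enters.

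The paper avoids this trap by \emph{not} truncating in $\varepsilon$: it keeps the full $(1-R_1^{-3})$ in the Melnikov integral (equation \eqref{e.050401}), expands $\mathbb D_0$ as a complete sine series in $\theta_0$, and estimates every Fourier coefficient uniformly via Lemma~\ref{l.042701} and Corollary~\ref{c.042701}. The outcome (Proposition~\ref{p.043001}) is that the dominant harmonic is $\sin\theta_0$, with coefficient $\sim\varepsilon^{-5}e^{-1/(3\varepsilon^{3})}$, coming from terms that are algebraically \emph{higher} order in $\varepsilon$ than the ones you kept. Your parity observation that $\mathbb D_0$ is a pure sine series is correct and does give $\mathbb D_0(0)=0$ for free, but proving $\partial_{\theta_0}\mathbb D_0(0)\neq 0$ genuinely requires controlling all harmonics simultaneously. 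A minor side issue: your displayed integrand should be $\varepsilon^{4}(b'a^{4}+a^{3}b^{2})$ rather than $b'+b^{2}/a$, since $P=a^{4}\varepsilon^{4}\mathbb P$ and $Q=a^{4}\varepsilon^{4}\mathbb Q+\cdots$ (Lemma~\ref{l.050403}), and you have also dropped the $-(x+a)F$ contribution hidden in the $a\,\mathbb O(1)$ remainder of $\mathbb Q$.
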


\vskip0.05in 

The contents of this section are as follows. In subsection \ref{s3.1}, we derive a counterpart of the Poincar\'e-Melnikov integral for the restricted three-body problem. In subsection \ref{s3.2}, we expand this integral into a Fourier series in $\theta_0$. In subsection \ref{s3.3}, we estimate the Fourier coefficients. The proof of Proposition \ref{prop3.1} is given at the end.

\vskip0.05in 

\subsection{Splitting Distance and Its First Approximation} \label{s3.1}
To analyze the intersection behavior of the stable and unstable manifolds, we introduce the splitting distance, which quantifies the separation between these manifolds at a fixed section.

\begin{definition}
	We define 
	$$
	{\mathbb D}(\theta_0, \rho, \varepsilon) := \frac{1}{\rho}\left({\mathbb M}^s(0) - {\mathbb M}^u(0)\right)
	$$ 
	as the {\bf splitting distance} of the stable manifold $W^s$ and the unstable manifold $W^u$.	Note that we divided a copy of $\rho$ to avoid ${\mathbb D} =0$ for all $\theta_0$ at $\rho = 0$.
\end{definition}  
If $\theta_0 \in {\mathbb R}$ is such that ${\mathbb D}(\theta_0, \rho, \varepsilon) = 0$, then the corresponding primary stable and unstable solutions match to form a homoclinic orbit of the perturbed system. Moreover, if at this value of $\theta_0$ we also have $\partial_{\theta_0} {\mathbb D}(\theta_0, \rho, \varepsilon) \neq 0$, then the stable and unstable manifolds intersect transversally along this homoclinic orbit. Therefore, to establish the existence of a transversal homoclinic intersection between $W^s$ and $W^u$, it suffices to show that there exists $\theta_0$ satisfying
\begin{equation*}
{\mathbb D}(\theta_0, \rho, \varepsilon) = 0, \quad \partial_{\theta_0}{\mathbb D}(\theta_0, \rho, \varepsilon) \neq 0.
\end{equation*}

\vskip0.05in 

We expand ${\mathbb D}(\theta_0, \rho, \varepsilon)$ as a power series in $\rho$ at $\rho = 0$ to write
$$
{\mathbb D}(\theta_0, \rho, \varepsilon) = {\mathbb D}_0(\theta_0, \varepsilon) + {\mathbb D}_1(\theta_0, \varepsilon) \rho + {\mathbb D}_2(\theta_0, \varepsilon) \rho^2 + \cdots.
$$

\begin{proposition}(Poincar\'e-Melnikov Integral)
	By regarding $\rho$ as the parameter of perturbation, we have
	\begin{eqnarray}\label{e.050401}
		\begin{aligned}
		{\mathbb D}_0 (\theta_0) &=  -\frac{\sqrt{2} }{2\varepsilon\sqrt{\varepsilon} }  \int_{-\infty}^{+\infty}  (b'a^2 + ab^2)\sin (\theta_0 + \psi)\left(1 - R^{-3}_1\right) d \tau  \\
		& \qquad +\frac{1}{2 \varepsilon^3 \sqrt{\varepsilon} }  \int_{-\infty}^{+\infty}  a b [ \varepsilon^2a^2 \cos (\theta_0 + \psi) (2+R^{-3}_1) +   (1   -   R^{-3}_1)]   d\tau,
		\end{aligned}
	\end{eqnarray}
	where
	\begin{align}\label{e.051101}
		R_1  = \sqrt{1-2 \varepsilon^2  a^2 \cos (\psi + \theta_0)  + \varepsilon^4  a^4}.
	\end{align}
\end{proposition}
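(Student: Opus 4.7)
The plan is to compute $\mathbb{D}_0(\theta_0)$ as the coefficient of $\rho$ in the Taylor expansion of $\mathbb{M}^s(0)-\mathbb{M}^u(0)$ around $\rho=0$, working from the integral representations of Lemma~\ref{l.050402} and Remark~\ref{r.050401}. By the real analyticity of the primary stable and unstable solutions in $\rho$ established in Proposition~\ref{props2.1}, together with the fact that at $\rho=0$ both solutions coincide with the unperturbed homoclinic orbit $(a,b,\psi)$ (so that $\mathbb{M}^s(0)=\mathbb{M}^u(0)=0$), we have
\[
\mathbb{D}_0(\theta_0) = \partial_\rho\bigl(\mathbb{M}^s(0) - \mathbb{M}^u(0)\bigr)\big|_{\rho=0}.
\]
Combining Lemma~\ref{l.050402}, Remark~\ref{r.050401}, and the rescaling \eqref{e.050406}, the starting point is
\[
\mathbb{M}^s(0) - \mathbb{M}^u(0) = -\frac{1}{\varepsilon^3\sqrt{\varepsilon}\,a^2(0)}\int_{-\infty}^{+\infty}(b'P + bQ)\,d\tau,
\]
with $P,Q$ from \eqref{SPQS1} evaluated along the respective primary solutions and $a(0)=\sqrt{2}$.

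The second step is the Poincar\'e-Melnikov reduction, which identifies $\partial_\rho(b'P+bQ)|_{\rho=0}$ on the perturbed orbit with the \emph{explicit} $\rho$-derivative on the unperturbed orbit. Since $G=\rho(1-\rho)\sin\theta(R_{13}^{-3}-R_{23}^{-3})$ carries an explicit factor of $\rho$, and since $R_{13}|_{\rho=0}\equiv 1$ independently of $(X,\theta,U)$ forces $F|_{\rho=0}\equiv 0$, we have $P|_{\rho=0}\equiv 0$ and $Q|_{\rho=0}=-x^3-3ax^2$ as functions of $(x,y,\Theta,U)$. A direct check then shows that $\partial_x Q$, $\partial_y Q$, $\partial_\Theta Q$, and $\partial_U Q$ all vanish at $\rho=0$ on the unperturbed orbit $(x,y,\Theta)=(0,0,0)$, $U=1$. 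Hence the chain-rule contributions through $x^{s/u}(\tau,\rho)$, $y^{s/u}(\tau,\rho)$, $\Theta^{s/u}(\tau,\rho)$, and through the Jacobi relation \eqref{Jacobi1} for $U$, all drop out, so $\partial_\rho(b'P+bQ)|_{\rho=0}$ reduces to the explicit $\rho$-derivative evaluated at $(X,Y,\theta,U)=(a,b,\theta_0+\psi,1)$.

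The third step is the explicit computation. Using $R_{23}|_{\rho=0}=R_1$ from \eqref{e.051101} and differentiating $R_{13}^2 = 1+2\rho\varepsilon^2 U^2 X^2\cos\theta+O(\rho^2)$, one obtains
\[
\partial_\rho G\big|_0 = \sin(\theta_0+\psi)(1-R_1^{-3}),\qquad \partial_\rho F\big|_0 = 1+2\varepsilon^2 a^2\cos(\theta_0+\psi) - (1-\varepsilon^2 a^2\cos(\theta_0+\psi))R_1^{-3},
\]
by summing the three contributions from the factors $(1-\rho)$, $(1+\rho\varepsilon^2 U^2 X^2\cos\theta)$, and $R_{13}^{-3}$ in $F$. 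Inserting these into $\partial_\rho P|_0 = \sqrt{2}\varepsilon^2 a^2\,\partial_\rho G|_0$ and $\partial_\rho Q|_0 = -a\,\partial_\rho F|_0 + \sqrt{2}\varepsilon^2 ab\,\partial_\rho G|_0$, and regrouping, yields
\[
b'\partial_\rho P|_0 + b\,\partial_\rho Q|_0 = \sqrt{2}\varepsilon^2(b'a^2+ab^2)\sin(\theta_0+\psi)(1-R_1^{-3}) - ab\bigl[(1-R_1^{-3}) + \varepsilon^2 a^2\cos(\theta_0+\psi)(2+R_1^{-3})\bigr].
\]
Multiplying by $-1/(\varepsilon^3\sqrt{\varepsilon}\,a^2(0)) = -1/(2\varepsilon^3\sqrt{\varepsilon})$ and splitting the resulting integral into its two terms produces exactly \eqref{e.050401}.

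The main obstacle is the Melnikov reduction in the second step: the implicit dependence of $U$ on $\rho$ through \eqref{Jacobi1} is not manifest and could, a priori, contribute at first order. What rescues the computation is the combined observation that $P|_{\rho=0}$ vanishes identically in all of its phase arguments, so every chain-rule term attached to $P$ is trivially zero, and that the only surviving part of $Q|_{\rho=0}$, namely $-x^3-3ax^2$, has vanishing first partial derivatives at $x=0$ and no dependence on $y$, $\Theta$, or $U$. Once these cancellations are in place, the remainder of the proof is a routine chain-rule calculation on $R_{13}$ and $R_{23}$ at $\rho=0$.
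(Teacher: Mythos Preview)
Your proof is correct and follows essentially the same route as the paper: both compute the first-order term in $\rho$ of the integral $-\frac{1}{\varepsilon^3\sqrt{\varepsilon}\,a^2(0)}\int(b'P+bQ)\,d\tau$ by reducing to the unperturbed orbit $(x,y,\Theta,U)=(0,0,0,1)$ and expanding $F,G$ to first order in $\rho$. Your chain-rule argument that the implicit contributions through $x^{s/u},y^{s/u},\Theta^{s/u},U$ all vanish (because $P|_{\rho=0}\equiv0$ identically and $Q|_{\rho=0}=-x^3-3ax^2$ has zero gradient at $x=0$) is a clean repackaging of what the paper does by direct power-series expansion, including its observation that $U=U_1+O(\rho)$ with $U_1(a,b,\cdot,\varepsilon)=1$.
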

\begin{proof}
We take the integral equations \eqref{eqnint1} for the primary stable solutions as our starting point, and introduce the rescaling
\begin{equation}\label{rescalerho}
	\widetilde M = \rho^{-1}{\mathbb M}, \quad \widetilde W = \rho^{-1}{\mathbb W},
\end{equation}
in order to incorporate the factor $\rho^{-1}$ appearing in the definition of the splitting distance ${\mathbb D}$. For convenience, we rename $\widetilde M$ and $\widetilde W$ back to ${\mathbb M}$ and ${\mathbb W}$, respectively, and obtain the following rescaled system:
\begin{equation}\label{eqnMWrho}
	\begin{split}
	\Theta(\tau)  =&  \int_0^{\tau} \frac{\sqrt{2}  S}{\varepsilon^3 U_{\theta_0, \psi}^3 (\varepsilon^3\sqrt{\varepsilon}  \rho {\mathbb X}+1)^3 a^6} d\tau,\\
	{\mathbb M}(\tau)   = & -\frac{1}{a^2 \varepsilon^3\sqrt{\varepsilon} \rho}  \int_{\tau}^{+\infty} \left(b' P     + b Q\right) d\tau,   \\
	{\mathbb W}(\tau)=& \frac{1}{ \varepsilon^3\sqrt{\varepsilon}\rho}  \int_0^{\tau}\frac{1}{a}(\widetilde{H} P -HQ) d\tau.
	\end{split}
\end{equation}

At this point, we need to calculate again $P, Q, S$, but this time we only need to track the order  in $\rho$ because  the splitting distance is real analytic in $\rho$ at $\rho = 0$.
 We expand $P$ and $Q$ into power series in $\rho$. To obtain ${\mathbb D}_0(\theta_0)$,  we   drop all terms of order $\rho^2$ and higher. We start with   
\begin{equation*}  
\begin{split}
R_{13} = &  \sqrt{1+\rho^2\varepsilon^4 U^4 X^4 + 2 \rho \varepsilon^2 U^2 X^2 \cos \theta }   \\
= &  1+\rho \varepsilon^2 U^2 X^2 \cos \theta+ O(\rho^2),   \\
R_{23} = &  \sqrt{1 + (1-\rho)^2 \varepsilon^4  U^4 X^4 -2 (1-\rho) \varepsilon^2 U^2 X^2 \cos \theta }\\
= &  R\left(1+\frac{-\rho  \varepsilon^4  U^4 X^4 +\rho \varepsilon^2 U^2 X^2 \cos \theta }{R^2}\right) +O(\rho^2), 
\end{split}
\end{equation*}
where
$$
R = \sqrt{1 +  \varepsilon^4  U^4 X^4 -2\varepsilon^2 U^2 X^2 \cos \theta}.
$$
We have 
\begin{equation*}  
	\begin{split}
	R_{13}^{-3} = &  1-3\rho \varepsilon^2 U^2 X^2 \cos \theta+ O(\rho^2),   \\
	R_{23}^{-3}  
	= &  R^{-3} \left(1+3\frac{\rho  \varepsilon^4  U^4 X^4 -\rho \varepsilon^2 U^2 X^2 \cos \theta }{R^2}\right) +O(\rho^2).
	\end{split}
\end{equation*}
It then follows from \eqref{e.050302} that 
\begin{equation*} 
\begin{split}
F = &  \rho(1-R^{-3})   +\rho \varepsilon^2 U^2 X^2 \cos \theta(2+R^{-3}) +O(\rho^2),  \\
G = &  \rho  \sin \theta \left(1 - R^{-3} \right) + O(\rho^2).
\end{split}
\end{equation*}

We now work on $U$ as defined by the Jacobi integral \eqref{Jacobi1}. First we let  
$U_1 = U_1(X, Y, \theta, \varepsilon)$ be such that 
$$
U_1   =   1- \varepsilon^3  U^3_1 \left( Y^2 +\frac{1}{2}   X^4 -  X^2\right).  
$$
We have 
$$
U_1(a(\tau), b(\tau), \theta, \varepsilon) = 1
$$
because 
$$
b(\tau)^2 + \frac{1}{2}   a(\tau)^4 -   a(\tau)^2 = 0.
$$
It follows by the Jacobi integral that 
$$
U = U_1 + O(\rho).
$$

Next we calculate $S$, $P$ and $Q$. First, recall from \eqref{SPQS1} that
$$
S = x^3 + 3a x^2 +3a^2 x + \left(U_{\theta_0, \psi}^3 -1\right)(x+a)^3 + \varepsilon^3 U_{\theta_0, \psi}^3  x (x+a)^3 a^3.
$$
The rescaling of variables \eqref{rescalerho} introduces a common factor $\rho$ to all terms in $S$. For $P, Q$, we have from \eqref{SPQS1},
\begin{equation*}\label{SPQS}
\begin{split}
P  = & \sqrt{2} \varepsilon^2   a^2  \rho  \sin \theta \left(1 - R^{-3}_1 \right)+O(\rho^2),\\
Q = & - a(\rho(1-R^3_1)   +\rho \varepsilon^2  a^2 \cos \theta(2+R^{-3}_1) )  +  \sqrt{2}\varepsilon^2   ab\rho  \sin \theta \left(1 - R^{-3}_1 \right) + O(\rho^2),
\end{split}
\end{equation*}
where
$$
R_1 = \sqrt{1 +  \varepsilon^4 a^4 -2\varepsilon^2  a^2 \cos \theta}.
$$

We now turn to the equation for ${\mathbb M}$ in \eqref{eqnMWrho}, dropping all $O(\rho)$ terms, to obtain 
\begin{equation*} 
\begin{split}
{\mathbb M}^s(0)   = & -\frac{\sqrt{2}}{a^2(0) \varepsilon\sqrt{\varepsilon} }  \int_{0}^{+\infty}  (b'  a^2   +    ab^2 )   \sin \theta \left(1 - R_1^{-3} \right)d \tau \\
 &+\frac{1}{a^2(0) \varepsilon^3\sqrt{\varepsilon} }  \int_{0}^{+\infty} ab((1-R_1^{-3})   + \varepsilon^2  a^2 \cos \theta(2+R_1^{-3}) )    d\tau + O(\rho).
 \end{split}
\end{equation*}
In view of Remark \ref{r.050401}, the formula for ${\mathbb D}_0$ stated in this proposition then follows directly.

\end{proof}

\subsection{Fourier Expansion of ${\mathbb D}_0(\theta_0)$}\label{s3.2}
We write ${\mathbb D}_0(\theta_0)$ as 
\begin{align}\label{e.042301}
	{\mathbb D}_0 (\theta_0) =  -\frac{\sqrt{2} }{2\varepsilon\sqrt{\varepsilon} } \mb I   +\frac{1}{2 \varepsilon^3 \sqrt{\varepsilon} }  \mb J,
\end{align} 
where 
\begin{align}\label{e.0219}
	\begin{split}
		\mb I  &= 	\int_{-\infty}^{+\infty}  (2a^3-  3a^5/2 )\sin (\theta_0 + \psi)\left(1 - R^{-3}_1\right) d \tau, \\
		\mb J &= \int_{-\infty}^{+\infty}  a b [ \varepsilon^2a^2 \cos (\theta_0 + \psi) (2+R^{-3}_1)+   (1   -   R^{-3}_1)]   d\tau,
	\end{split}
\end{align}
and $R_1$ is given by \eqref{e.051101}. To study the Fourier expansion of $\mb I$ and $\mb J$, we introduce for $ j=1,2$, 
\begin{align}\label{e.051003}
	\begin{split}
		I^{(j)}_{n,m,\odd} &= \ee^{4n} \int_{-\infty}^{+\infty} a^{4n+4+(-1)^j} e^{\mi (2m+1)\psi} d \tau, \quad \mb I^{(j)}_{n,m,\odd} = \mathrm{Re}I^{(j)}_{n,m,\odd}, \\
		I^{(j)}_{n,m,\even} &= \varepsilon^{4n-2} \int_{-\infty}^{+\infty} a^{4n+2+(-1)^j} e^{\mi 2m \psi} d \tau, \quad \mb I^{(j)}_{n,m,\even} = \mathrm{Re}I^{(j)}_{n,m,\even},\\
		J_{n,m,\odd} &= \varepsilon^{4n+2} \int_{-\infty}^{+\infty} a^{4n+3} b e^{\mi(2m+1) \psi} d \tau, \quad \mb J_{n,m,\odd} = \mathrm{Im}J_{n,m,\odd},\\
		J_{n,m,\even}&=\varepsilon^{4n} \int_{-\infty}^{+\infty} a^{4n+1} b e^{\mi 2m \psi} d \tau,\quad \mb J_{n,m,\even} = \mathrm{Im}J_{n,m,\even}. 
	\end{split}
\end{align}
The following is a direct consequence of Lemma \ref{l.051001} and Lemma \ref{l.051002} in Appendix \ref{a.051001}. 
\begin{proposition}\label{p.051001}
	We have 
	\begin{align*}
		\mb I =  \sum_{m = 0}^{\infty}\mb I_m^{\odd} \sin (2m+1)\theta_0 + \sum_{m = 1}^{\infty}\mb I_m^{\even} \sin (2m)\theta_0,
	\end{align*}
	where 
	\begin{align*}
		\mb I_0^{\odd}& = \sum_{k=1}^{\infty}\sum_{\ell=0}^{\infty}\left(-C_{0,k,\ell}^{(1)} \mb  I^{(1)}_{k+\ell,0,\odd}+C_{0,k,\ell}^{(2)} \mb I^{(2)}_{k+\ell,0,\odd}\right), \\
		\mb I_m^{\odd}  &= \sum_{k=m}^{\infty}\sum_{\ell=0}^{\infty}\left(-C_{m,k,\ell}^{(1)} \mb I^{(1)}_{k+\ell,m,\odd}+C_{m,k,\ell}^{(2)} \mb I^{(2)}_{k+\ell,m,\odd}\right), \quad m\geq 1, \\
		\mb I_m^{\even} &= \sum_{k=m}^{\infty}\sum_{\ell=0}^{\infty}\left(E_{m,k,\ell}^{(1)} \mb I^{(1)}_{k+\ell,m,\even}-E_{m,k,\ell}^{(2)} \mb I^{(2)}_{k+\ell,m,\even}\right), \quad m\geq 1, 
	\end{align*}
	and the coefficients $C_{m,k,\ell}^{(j)}, E_{m,k,\ell}^{(j)}$ are given as in Lemma \ref{l.051001}. 

	For $\mb J$ we have
	\begin{align*}
		\mb J =  \sum_{m = 0}^{\infty}\mb J_m^{\odd} \sin (2m+1)\theta_0 + \sum_{m = 1}^{\infty}\mb J_m^{\even} \sin (2m)\theta_0,
	\end{align*}
	where 
	\begin{align*}
		\mb J_0^{\odd} &= \sum_{k=1}^{\infty}\sum_{\ell=0}^{\infty}  \mc C_{0, k,\ell} \mb J_{k+\ell, 0, \odd},\\
		\mb J_m^{\odd} &= \sum_{k=m}^{\infty}\sum_{\ell=0}^{\infty}  \mc C_{m, k,\ell} \mb J_{k+\ell, m, \odd},\, m\geq 1, \\
		\mb J_m^{\even} &= \sum_{k=m}^{\infty}\sum_{\ell=0}^{\infty}  \mc E_{m, k,\ell} \mb J_{k+\ell, m, \even}, \, m\geq 1, 
	\end{align*}
	and the coefficients $\mc C_{m,k,\ell}, \mc E_{m,k,\ell}$ are given as in Lemma \ref{l.051002}. 
\end{proposition}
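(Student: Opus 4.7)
The plan is to obtain both expansions by a two-step process: first expand $R_1^{-3}$ into a convergent double power series in $\ee^2 a^2 e^{\pm\mi(\theta_0+\psi)}$, then use angle-addition together with the parity of the homoclinic data under $\tau\mapsto-\tau$ to isolate the surviving Fourier modes in $\theta_0$.

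\emph{Expansion.} From \eqref{e.051101} I would factor
\begin{align*}
R_1^2 = \bigl(1-\ee^2 a^2 e^{\mi(\theta_0+\psi)}\bigr)\bigl(1-\ee^2 a^2 e^{-\mi(\theta_0+\psi)}\bigr)
\end{align*}
and apply the binomial series $(1-z)^{-3/2}=\sum_{k\ge 0}\tfrac{(2k+1)!!}{2^k k!}z^k$ to each factor, obtaining
\begin{align*}
R_1^{-3}=\sum_{k,\ell\ge 0} c_k c_\ell\,\ee^{2(k+\ell)}a^{2(k+\ell)} e^{\mi(k-\ell)(\theta_0+\psi)},\qquad c_k:=\tfrac{(2k+1)!!}{2^k k!}.
\end{align*}
Since $\ee^2 a^2\le 8\ee^2\ll 1$ uniformly in $\tau$, the series converges absolutely and may be integrated term by term; analogous expansions handle $1-R_1^{-3}$ and $2+R_1^{-3}$.

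\emph{Substitution and parity.} I would then substitute the expansion into \eqref{e.0219}. For $\mb I$, the prefactor $\sin(\theta_0+\psi)$ is absorbed into the phase via
\begin{align*}
\sin(\theta_0+\psi)\, e^{\mi j(\theta_0+\psi)}=\tfrac{1}{2\mi}\bigl(e^{\mi(j+1)(\theta_0+\psi)}-e^{\mi(j-1)(\theta_0+\psi)}\bigr),
\end{align*}
and an analogous identity handles the $\cos(\theta_0+\psi)$ piece of $\mb J$. Splitting $e^{\mi m(\theta_0+\psi)}=e^{\mi m\theta_0}e^{\mi m\psi}$ pulls the $\theta_0$-dependence outside the $\tau$-integral. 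Since $a$ is even and $b$, $\psi$ are odd in $\tau$ by \eqref{e.050303}, one has $\int_{\mb R}a^p e^{\mi k\psi}d\tau\in\mb R$ while $\int_{\mb R}a^p b\, e^{\mi k\psi}d\tau\in\mi\mb R$. This forces only $\sin(m\theta_0)$ harmonics to survive in both $\mb I$ and $\mb J$, and accounts for the $\mathrm{Re}$ (resp.\ $\mathrm{Im}$) appearing in the definitions \eqref{e.051003} of $\mb I^{(j)}_{n,m,*}$ (resp.\ $\mb J_{n,m,*}$).

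\emph{Sorting and conclusion.} Regrouping the resulting series by the parity of the harmonic index and splitting into $m=2m'+1$ and $m=2m'$ produces the four sums in the statement. The binomial weights $c_k c_\ell$, combined with the index shifts $j\mapsto j\pm 1$ from the angle-addition identity, assemble into precisely the coefficients $C^{(j)}_{m,k,\ell}$, $E^{(j)}_{m,k,\ell}$, $\mc C_{m,k,\ell}$, $\mc E_{m,k,\ell}$. This explicit identification is what Lemma~\ref{l.051001} (for $\mb I$) and Lemma~\ref{l.051002} (for $\mb J$) in Appendix~\ref{a.051001} carry out, so Proposition~\ref{p.051001} follows at once. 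The analytic content (absolute convergence, interchange of sum and integral, parity vanishing) is elementary; the main obstacle is purely combinatorial, namely confirming that reindexing over $(k,\ell)$ after the angle-addition step reproduces the stated double-sum structure $\sum_{k\ge m}\sum_{\ell\ge 0}$ with the correct lower bound on $k$, which is exactly what is deferred to the appendix.
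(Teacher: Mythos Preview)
Your proposal is correct and follows essentially the same route as the paper: expand $R_1^{-3}$ in powers of $\ee^2 a^2$, use the parity $a$ even and $b,\psi$ odd in $\tau$ to see that only sine harmonics in $\theta_0$ survive, and defer the explicit coefficient bookkeeping to Lemmas~\ref{l.051001} and~\ref{l.051002}. The only cosmetic difference is that you expand via the complex-conjugate factorization $R_1^2=(1-\ee^2a^2e^{\mi(\theta_0+\psi)})(1-\ee^2a^2e^{-\mi(\theta_0+\psi)})$, whereas the paper (Lemma~\ref{l.050901}) writes $R_1=(1+\ee^4a^4)^{1/2}\bigl(1-\tfrac{2\ee^2a^2\cos(\theta_0+\psi)}{1+\ee^4a^4}\bigr)^{1/2}$ and expands first in powers of $\cos(\theta_0+\psi)$, then secondarily in powers of $\ee^4a^4$ from the remaining $(1+\ee^4a^4)^{-\alpha}$ factors; this is why the stated coefficients involve products like $\binom{-3/2}{2k}\binom{-2k-3/2}{\ell}$ rather than your $c_kc_\ell$, but the two parametrizations of the same absolutely convergent series of course agree upon resummation.
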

The proposition shows that the task of evaluating $\mathbb{D}_0\left(\theta_0\right)$ is essentially reduced to evaluating $I^{(j)}_{n,m,\odd}$ and $J_{n,m,\odd}$ for $(n,m)\in \Delta_{\odd}$, and $I^{(j)}_{n,m,\even}, \, J_{n,m,\even}$ for $(n,m)\in \Delta_{\even}$. Here 
\begin{align}\label{e.042701}
	\begin{split}
		&\Delta_{\odd} = \{(n,m)\in\mathbb N^2: m=0, n\geq  1\}\cup \{(n,m)\in\mathbb N^2: m\geq 1, n \geq m\},\\
		&\Delta_{\even} = \{(n,m)\in\mathbb N^2: m\geq 1, n \geq m\}. 
	\end{split}
\end{align}

In particular, the following two coefficients will provide important contributions. 
\begin{align}\label{e.051002}
	\begin{split}
		C^{(2)}_{0,1,0} &= \frac32\left(\binom{-3/2}{1}+\binom{-3/2}{2}\left(2\binom{2}{1}- \binom{2}{0}\right)\right) =\frac{99}{16},\\
		\mc C_{0,1,0} &= -\left(-3\binom{-5/2}{1}  +\binom{-3/2}{1}\right)-\binom{3}{1}\left(\frac12\binom{-3/2}{2}-\binom{-3/2}{3}\right)  = -\frac{123}{8}.
	\end{split}
\end{align}

\subsection{Estimates On $I^{(j)}_{n,m,\odd}$, $I^{(j)}_{n,m,\even}$ and $J_{n,m,\odd}$, $J_{n,m,\even}$}  \label{s3.3}
 Recall from \eqref{e.050303} that 
\begin{equation*} 
 \begin{split}
 a(\tau)  = &   \frac{2 \sqrt{2}}{ e^{\tau}+e^{-\tau}}, \ \ \ \ \ \ \ \  b(\tau) =  \frac{2 \sqrt{2} \left(e^{-\tau}-e^{\tau}\right)}{\left(e^{\tau}+e^{-\tau}\right)^2},  \\
 \psi(\tau) = & 2 \tan^{-1} \frac{1}{2}(e^{\tau}- e^{-\tau}) -\frac{1}{48\varepsilon^3} \left(e^{3 \tau}- e^{-3\tau}\right)-\frac{3}{16\varepsilon^3}\left(e^{\tau}- e^{-\tau}\right).
 \end{split}
\end{equation*}
 
Let $y: {\mathbb R} \to {\mathbb R}$ be such that 
\begin{equation}\label{ztoy}
y(z) = (\sqrt{z^2 + 4^3}+z)^{1/3} -( \sqrt{z^2 + 4^3} -z)^{1/3}.
\end{equation}

In view of \eqref{e.051003}, using the formula 
\begin{align*}
	\int_{-\infty}^{\infty}a^pb^re^{\mi q\psi}d\tau = \frac{2}{3}\int_{-\infty}^{\infty}\frac{ (2 \sqrt{2})^{p+r}(-1)^{r+q} e^{- \frac{\mi q}{24\varepsilon^3}z}y(z)^r}{(y(z)+2\mi)^{1+q+(p+2r+1)/2}(y(z)-2\mi)^{1-q+(p+2r+1)/2}}dz
\end{align*}
proved in Lemma \ref{l.043001} in Appendix \ref{a.051001}, we have 
\begin{proposition}\label{p.042701}
Let $\delta(j) = \frac{1+(-1)^j}{2}$ for $j=1,2$. Then 
\begin{align*}
    \begin{split}
        I^{(j)}_{n,m,\odd} &=  \frac{2(2 \sqrt{2})^{4(n+1)+(-1)^j}\ee^{4n}}{-3}\int_{-\infty}^{\infty}\frac{e^{- \frac{\mi(2m+1)}{24\varepsilon^3}z}}{(y(z)+2\mi)^{2(n+m+2)+\delta(j)}(y(z)-2\mi)^{2(n-m+1)+\delta(j)}}dz,\\
        I^{(j)}_{n,m,\even}& = \frac{2(2 \sqrt{2})^{4n+2+(-1)^j}\varepsilon^{4n-2}}{3}\int_{-\infty}^{\infty}\frac{e^{- \frac{\mi m}{12\varepsilon^3}z}}{(y(z)+2\mi)^{2(n+m+1)+\delta(j)}(y(z)-2\mi)^{2(n-m+1)+\delta(j)}}dz, \\
        J_{n,m,\odd}& =\frac{2(2 \sqrt{2})^{4(n+1)}\varepsilon^{4n+2} }{3}\int_{-\infty}^{\infty}\frac{e^{- \frac{\mi(2m+1)}{24\varepsilon^3}z}y(z)}{(y(z)+2\mi)^{2(n+m)+5}(y(z)-2\mi)^{2(n-m)+3}}dz,\\
        J_{n,m,\even}& = \frac{2(2 \sqrt{2})^{4n+2}\varepsilon^{4n}}{-3}\int_{-\infty}^{\infty}\frac{e^{- \frac{\mi m}{12\varepsilon^3}z}y(z)}{(y(z)+2\mi)^{2(n+m)+3}(y(z)-2\mi)^{2(n-m)+3}}dz.
    \end{split}
\end{align*}
\end{proposition}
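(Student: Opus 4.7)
My plan is to derive Proposition \ref{p.042701} as a direct specialization of the change-of-variables formula supplied by Lemma \ref{l.043001}, namely
\[
\int_{-\infty}^{\infty} a^p b^r e^{\mi q \psi}\,d\tau \;=\; \frac{2}{3}\int_{-\infty}^{\infty}\frac{(2\sqrt 2)^{p+r}(-1)^{r+q}\, e^{-\frac{\mi q}{24\ee^3}z}\,y(z)^r}{(y(z)+2\mi)^{1+q+(p+2r+1)/2}(y(z)-2\mi)^{1-q+(p+2r+1)/2}}\,dz.
\]
For each of the four integrals defined in \eqref{e.051003} I read off the triple $(p,r,q)$ from the definition, substitute into this identity, and simplify the denominator exponents with the bookkeeping identity $(-1)^j = 2\delta(j)-1$, where $\delta(j) = (1+(-1)^j)/2$.

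Specifically, for $I^{(j)}_{n,m,\odd}$ I take $(p,r,q) = (4n+4+(-1)^j,\,0,\,2m+1)$, which gives $(-1)^{r+q} = -1$, exponent $1+q+(p+1)/2 = 2(n+m+2)+\delta(j)$ in the $(y+2\mi)$ factor, and exponent $1-q+(p+1)/2 = 2(n-m+1)+\delta(j)$ in the $(y-2\mi)$ factor; multiplying by the outer $\ee^{4n}$ and the $(2\sqrt{2})^{4(n+1)+(-1)^j}$ produces the stated formula. For $I^{(j)}_{n,m,\even}$ I use $(p,r,q) = (4n+2+(-1)^j,\,0,\,2m)$, for which $(-1)^{r+q} = +1$ and the exponents simplify to $2(n+m+1)+\delta(j)$ and $2(n-m+1)+\delta(j)$. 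For $J_{n,m,\odd}$ I use $(p,r,q) = (4n+3,\,1,\,2m+1)$, giving $(-1)^{r+q} = +1$, a factor $y(z)$ in the numerator from $y(z)^r$, and exponents $2(n+m)+5$ and $2(n-m)+3$. For $J_{n,m,\even}$ I take $(p,r,q) = (4n+1,\,1,\,2m)$, which yields $(-1)^{r+q} = -1$ (accounting for the overall sign $2/(-3)$ in the claim), a factor $y(z)$, and exponents $2(n+m)+3$ and $2(n-m)+3$.

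There is no substantive obstacle to overcome, since the proposition is simply a catalogue of four instantiations of a single master formula. The only work is parity tracking in $(-1)^{r+q}$, which fixes the overall sign in each case, and arithmetic simplification of the half-integer exponents $(p+2r+1)/2$ into the displayed form $2(\cdots)+\delta(j)$ or $2(\cdots)+\text{integer}$. The nontrivial mathematical content underlying the argument, namely the justification of Lemma \ref{l.043001} itself, is deferred to Appendix \ref{a.051001}; that lemma must rest on the explicit form of $\psi(\tau)$ in \eqref{e.050303} together with a change of variable whose inverse realizes the cubic resolvent $y(z)$ of \eqref{ztoy}, and is taken for granted here.
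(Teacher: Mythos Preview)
Your proposal is correct and follows exactly the paper's approach: the paper states the proposition as an immediate consequence of Lemma~\ref{l.043001} applied to the definitions in \eqref{e.051003}, and you have carried out precisely that specialization, with the added benefit of spelling out the $(p,r,q)$ triples and the exponent arithmetic that the paper leaves implicit.
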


\vskip0.05in

To estimate the integrals in Proposition \ref{p.042701}, we introduce 
\begin{align*}
	\mathcal{I}_{m,n,r}(k) = \int_{-\infty}^{\infty}\frac{e^{-\mi kz}y(z)^r}{(y(z)+2\mi)^m(y(z)-2\mi)^n} dz,
\end{align*}
where $k>0$ and $m,n,r\in\mathbb N$ such that the integral converges, and $y(z)$ is as defined in \eqref{ztoy}. 
\begin{lemma}\label{l.042701}
	As $k\to\infty$, we have 
	\begin{align*}
		\mathcal{I}_{m,n,r}(k) =\mi^{3(r-n)-m}2^{r-2n+1} \frac{3^{\frac{m}{2}}\pi}{\Gamma(m/2)}e^{-8k} k^{\frac{m-2}{2}}+O(k^{\frac{m-3}{2}}e^{-8k}).
	\end{align*}	
\end{lemma}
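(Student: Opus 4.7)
The plan is to use the substitution $z \mapsto y$ determined by $z = (y^3+12y)/2$, which follows from the identity $(\sqrt{z^2+64}+z)^{1/3}\cdot(\sqrt{z^2+64}-z)^{1/3} = 4$ applied to the two cube roots defining $y(z)$. Since $dz = \frac{3}{2}(y^2+4)\,dy = \frac{3}{2}(y+2\mi)(y-2\mi)\,dy$, this transforms the integral into
$$\mathcal I_{m,n,r}(k) = \frac{3}{2}\int_{-\infty}^{\infty} \frac{e^{-\mi k (y^3+12y)/2}\, y^r}{(y+2\mi)^{m-1}(y-2\mi)^{n-1}}\,dy,$$
which is set up for a saddle-point analysis in the complex $y$-plane. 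The cubic phase $\phi(y) = (y^3+12y)/2$ has critical points at $y = \pm 2\mi$; only $y_0 = -2\mi$ produces the desired factor $e^{-\mi k\phi(y_0)} = e^{-8k}$, and since $y_0$ is simultaneously a pole of order $m-1$ of the integrand, this is a coalescing saddle-pole configuration that converts the usual Gaussian factor $k^{-1/2}$ into $k^{(m-2)/2}$.

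Next, I would deform the real contour down to $\mathrm{Im}(y) = -2 + c/\sqrt k$ for a fixed $c > 0$, which stays strictly above the pole at $y_0$, so no residues are collected; the vertical segments at $\mathrm{Re}(y) = \pm R$ vanish as $R\to\infty$ because $\mathrm{Im}(\phi(y)) \sim \tfrac{3}{2}(\mathrm{Im}\,y)(\mathrm{Re}\,y)^2$ is very negative there. Rescaling $W = \sqrt k\,(y+2\mi)$ moves the contour to $\mathrm{Im}(W) = c$, and the expansions
$$-\mi k\phi(y) = -8k - 3W^2 + O(k^{-1/2}W^3),\quad y^r = (-2\mi)^r\bigl(1+O(W/\sqrt k)\bigr),$$
$$(y-2\mi)^{-(n-1)} = (-4\mi)^{-(n-1)}\bigl(1+O(W/\sqrt k)\bigr),\quad (y+2\mi)^{-(m-1)} = k^{(m-1)/2}W^{-(m-1)}$$
isolate the leading term
$$\tfrac{3}{2}\,e^{-8k}(-2\mi)^r(-4\mi)^{-(n-1)} k^{(m-2)/2}\,I_c, \qquad I_c := \int_{\mathrm{Im}(W)=c} e^{-3W^2}\,W^{-(m-1)}\,dW,$$
with the dropped contributions yielding exactly the $O(k^{(m-3)/2}e^{-8k})$ error.

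To evaluate $I_c$, I would apply the integral representation $W^{-p} = \frac{(-\mi)^p}{\Gamma(p)}\int_0^\infty t^{p-1} e^{\mi tW}\,dt$ (valid for $\mathrm{Im}(W)>0$) with $p = m-1$. Fubini and the Gaussian identity $\int_{\mathrm{Im}(W)=c} e^{-3W^2+\mi tW}\,dW = \sqrt{\pi/3}\,e^{-t^2/12}$ (independent of $c$ by Cauchy) reduce matters to $\int_0^\infty t^{p-1} e^{-t^2/12}\,dt = 2^{p-1}\,3^{p/2}\,\Gamma(p/2)$, and Legendre's duplication formula $\Gamma(p) = 2^{p-1}\,\Gamma(p/2)\,\Gamma((p+1)/2)/\sqrt\pi$ collapses this to $I_c = (-\mi)^{m-1}\,3^{(m-2)/2}\,\pi/\Gamma(m/2)$. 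Reassembling the prefactors with $-\mi = \mi^3$: the $\mi$-exponents sum to $3r+3(1-n)+3(m-1) = 3(r-n+m) \equiv 3(r-n)-m \pmod 4$; the powers of $2$ and $3$ combine as $\tfrac{3}{2}\cdot 2^{r-2n+2}\cdot 3^{(m-2)/2} = 2^{r-2n+1}\cdot 3^{m/2}$, reproducing the claimed formula.

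The hardest part is the contour deformation itself: because the phase is cubic, a naive horizontal deformation to $\mathrm{Im}(y) = $ constant fails at intermediate heights, and the height $\eta = c/\sqrt k$ must be matched to the Gaussian scale of the saddle so that the pole stays just off the contour while the local quadratic approximation of $\phi$ governs the asymptotics. The error term then comes from the Taylor remainder in the expansions above and from estimating the tails of the integral away from a fixed neighborhood of $y_0$, where the integrand is exponentially smaller than $e^{-8k}$ by the steepest-descent structure.
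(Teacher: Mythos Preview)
Your approach is correct and arrives at the same leading term, but it is genuinely different from the paper's argument. The paper stays in the $z$-plane: it observes that $y(z)$ has a square-root branch point at $z=-8\mi$ (where $y\to -2\mi$), expands the integrand in powers of $\zeta^{1/2}$ with $\zeta=z+8\mi$, and then invokes Watson's lemma, so that the leading contribution reduces to a Hankel-contour integral evaluated via the classical identity $\int_{H(-0)} s^{-m/2}e^{s}\,ds = 2\pi\mi/\Gamma(m/2)$. By contrast, you first undo the change of variables to work in the $y$-plane, where the branch point becomes the coalescing saddle--pole at $y=-2\mi$; you then run a steepest-descent argument with the contour pushed to height $-2+c/\sqrt{k}$, and compute the resulting model integral $\int e^{-3W^{2}}W^{-(m-1)}\,dW$ through the Mellin representation of $W^{-p}$ combined with Legendre duplication. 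The paper's route is shorter because Watson's lemma packages the contour deformation and the Hankel evaluation in one step; your route is more hands-on but makes the saddle-point structure and the origin of the exponent $k^{(m-2)/2}$ completely explicit, and it ties back naturally to the $y$-parametrization already used in Lemma~\ref{l.043001}. One small point: your Mellin identity for $W^{-p}$ requires $p=m-1>0$, so the case $m=1$ (pure saddle, no pole) should be handled separately as a plain Gaussian; the final formula still matches since $\Gamma(1/2)=\sqrt{\pi}$. Your remark that horizontal deformation ``fails at intermediate heights'' is slightly overstated --- any height $0<\eta<2$ is admissible, it just does not give the sharp $e^{-8k}$ bound until $\eta\to 2^{-}$ --- but the scaling $\eta=2-c/\sqrt{k}$ you chose is exactly right for isolating the leading term with the stated error.
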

\begin{proof}
	We choose $(-\infty \mi, -8\mi)$  and $(8\mi, \infty \mi)$ as branch cuts, to extend the function $y(z)$ to the complex domain.  Let $\zeta = z+8\mi$. Then a straightforward computation yields 
	\begin{align*}
		y = y_0+y_{1/2}\zeta^{1/2} + O(\zeta),
	\end{align*}
	with $y_0 = -2\mi, y_{1/2} = \frac{\sqrt{6}}{6}(1+\mi)$. This implies that 
	\begin{align*}
		&\frac{y(z)^r}{(y(z)+2\mi)^m(y(z)-2\mi)^n}\\
		& = y_0^r\left(1+\frac{ry_{1/2}}{y_0}\zeta^{1/2} + O(\zeta)\right)y_{1/2}^{-m}\zeta^{-m/2}(-4\mi)^{-n}\left(1+\frac{ny_1}{4\mi} \zeta^{1 / 2}+O(\zeta)\right)\\
		& = Y_0\zeta^{-m/2}\left(1+Y_{1/2}\zeta^{1/2}+O(\zeta)\right),
	\end{align*}
	where $Y_0 = y_0^ry_{1/2}^{-m}(-4\mi)^{-n}$ and $Y_{1/2} = \frac{ry_{1/2}}{y_0}+ \frac{ny_1}{4\mi}$. 

	\vskip0.05in

	By Watson's lemma \cite{NR86}, the dominating term of the integral is from the branch point $z=-8\mi$, which is reduced to 
	\begin{align*}
		\mathcal{I}_0(k) := -Y_0e^{8k} \int_{H(-0\mi)}\zeta^{-m/2} e^{-\mi k\zeta} d\zeta, 
	\end{align*}
	where $H(-0\mi)$ is the Hankel contour enclosing the negative imaginary axis. By a change of variable $u=-\mi k\zeta$, this contour is transformed to $H(-0)$, the Hankel contour enclosing the negative real axis, which gives 
	\begin{align*}
		\mathcal{I}_0(k) = -Y_0e^{8k}(-\mi k)^{\frac{m-2}{2}} \int_{H(-0)}u^{-m/2} e^{u} du  = -Y_0e^{8k}(-\mi k)^{\frac{m-2}{2}} \frac{2\pi \mi}{\Gamma(m/2)}.
	\end{align*}
	Here we used the integral representation of the reciprocal Gamma function 
	\[\frac{1}{2 \pi \mi} \int_{H(-0)} s^{-z} e^s d s=\frac{1}{\Gamma(z)}, z\in\mathbb C. \]
	Therefore,  
	\begin{align*}
		\mathcal{I}_{m,n,r}(k) &= -(-2\mi)^r(-4\mi)^{-n}\left(\frac{\sqrt{6}}{6}(1+\mi)\right)^{-m}   \frac{2\pi \mi (-\mi)^{\frac{m-2}{2}}}{\Gamma(m/2)}  e^{-8k} k^{\frac{m-2}{2}} +O(k^{\frac{m-3}{2}}e^{-8k})\\
		& =(-1)^{r-n-\frac{m}{2}}2^{r+1-2n}(\sqrt{3})^m e^{-\mi \frac{\pi m}{4}}\mi^{r-n+\frac{m}{2}}\frac{\pi}{\Gamma(m/2)}e^{-8k} k^{\frac{m-2}{2}}+O(k^{\frac{m-3}{2}}e^{-8k})\\
		& =\mi^{3(r-n)-m}2^{r+1-2n}(\sqrt{3})^m \frac{\pi}{\Gamma(m/2)}e^{-8k} k^{\frac{m-2}{2}}+O(k^{\frac{m-3}{2}}e^{-8k}).
	\end{align*}
\end{proof}
\begin{remark}
	By Lemma \ref{l.042701} and similar transformations used in Lemma \ref{l.043001}, it follows that 
	\begin{align*}
		\int_{-\infty}^{\infty}\frac{e^{- \frac{\mi}{\varepsilon^3}\left(\frac{z^3}{3}+z\right)}}{(z+\mi)^{4}}dz &= \frac{2^4}{3}\int_{-\infty}^{\infty}\frac{e^{-\frac{\mi z}{12\varepsilon^3}}}{(y(z)+2\mi)^{5}(y(z)-2\mi)}dz\\
		&= \frac{4\sqrt{\pi}}{3}\varepsilon^{-\frac{9}{2}}e^{-\frac{2}{3\varepsilon^3}} + O(e^{-\frac{2}{3\varepsilon^3}}\varepsilon^{-3}),
	\end{align*}
	and 
	\begin{align*}
		\int_{-\infty}^{\infty}\frac{e^{- \frac{\mi}{2\varepsilon^3}\left(\frac{z^3}{3}+z\right)}}{(z+\mi)^{4}(z-\mi)^2}dz &= \frac{2^6}{3}\int_{-\infty}^{\infty}\frac{e^{-\frac{\mi z}{24\varepsilon^3}}}{(y(z)+2\mi)^{5}(y(z)-2\mi)^{3}}dz\\
		& = -\frac{\sqrt{2 \pi}}{12} \varepsilon^{-9/2}e^{-\frac{1}{3\varepsilon^3}} + O(\varepsilon^{-3}e^{-\frac{1}{3\varepsilon^3}}),
	\end{align*}
	which recovers the relevant estimates in \cite{G}. 
\end{remark}

In view of \eqref{e.042301}, we have the following corollary that is a direct consequence of Proposition \ref{p.042701} and Lemma \ref{l.042701}. 
\begin{corollary}\label{c.042701}
	We have 
	\begin{align*}
		\begin{split}
			\ee^{-\frac32}I^{(j)}_{n,m,\odd} &\sim e^{-\frac{(2m+1)}{3\varepsilon^3}} \varepsilon^{n-3m-\frac{9}{2}-\frac{3}{2}\delta(j)},\\
			\ee^{-\frac32}I^{(j)}_{n,m,\even}& \sim e^{-\frac{2m}{3\varepsilon^3}}\ee^{n-3m-\frac72- \frac32\delta(j)}, \\
			\ee^{-\frac72}J_{n,m,\odd}& \sim e^{-\frac{(2m+1)}{3\varepsilon^3}}\ee^{n-3m -6},\\
			\ee^{-\frac72}J_{n,m,\even}& \sim e^{-\frac{2m}{3\varepsilon^3}}\ee^{n-3m-5}.
		\end{split}
	\end{align*}
\end{corollary}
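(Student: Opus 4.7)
The plan is to apply Lemma \ref{l.042701} directly to each of the four integral representations produced by Proposition \ref{p.042701}. In every one of the four cases, that proposition expresses the integral as an explicit prefactor (a constant multiple of a power of $\ee$) times an instance of $\mathcal{I}_{M,N,r}(k)$, with $(M,N,r,k)$ read off from the exponents of the factors $(y(z)\pm 2\mi)$ in the denominator, the power $r$ of $y(z)$ in the numerator, and the coefficient of $z$ in the oscillatory phase $e^{-\mi k z}$. Once these parameters are identified, Lemma \ref{l.042701} supplies $\mathcal{I}_{M,N,r}(k)\sim C_{M,N,r}\,e^{-8k}k^{(M-2)/2}$ with a nonzero constant $C_{M,N,r}$, and everything else is bookkeeping.

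Concretely, for $I^{(j)}_{n,m,\odd}$ one reads off $M = 2(n+m+2)+\delta(j)$, $r=0$, $k=(2m+1)/(24\ee^{3})$, with prefactor scaling as $\ee^{4n}$. Then $8k = (2m+1)/(3\ee^{3})$ yields the claimed exponential, while $k^{(M-2)/2}$ scales as $\ee^{-3(n+m+1)-3\delta(j)/2}$; combining with the prefactor and dividing by $\ee^{3/2}$ gives the exponent $n-3m-9/2-3\delta(j)/2$ as asserted. The remaining three cases are treated by the same template with different parameters: $I^{(j)}_{n,m,\even}$ uses $M=2(n+m+1)+\delta(j)$, $r=0$, $k=m/(12\ee^{3})$, prefactor $\ee^{4n-2}$; $J_{n,m,\odd}$ uses $M=2(n+m)+5$, $r=1$, $k=(2m+1)/(24\ee^{3})$, prefactor $\ee^{4n+2}$; and $J_{n,m,\even}$ uses $M=2(n+m)+3$, $r=1$, $k=m/(12\ee^{3})$, prefactor $\ee^{4n}$. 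In each case a routine simplification of powers of $\ee$, combined with division by the appropriate $\ee^{3/2}$ or $\ee^{7/2}$, reproduces the four claimed $\sim$-relations.

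Two small matters need attention. First, the asymptotic regime $k\to\infty$ in Lemma \ref{l.042701} must coincide with the relevant regime $\ee\to 0^+$ for every admissible index: this is immediate because $2m+1\ge 1$ always, and $m\ge 1$ in the even case (recall that $\Delta_{\even}$ excludes $m=0$), so $k$ grows like $\ee^{-3}$ in all four cases. Second, the constant $C_{M,N,r}$ provided by Lemma \ref{l.042701} is a nonzero complex number, so each $\sim$-relation stated in the corollary encodes genuine leading-order behavior rather than a mere upper bound. I expect no substantial obstacle: the analytic core of the estimate, namely the Hankel-contour deformation at the branch point $z=-8\mi$, Watson's lemma, and the reciprocal Gamma integral, has already been carried out inside Lemma \ref{l.042701}. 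The only risk in writing the proof is arithmetic transcription error in tracking the four exponents, so the proof should be organized as a concise table of $(M,N,r,k)$ and prefactor exponents for the four cases, followed by a single uniform line of simplification.
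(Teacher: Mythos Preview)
Your proposal is correct and follows exactly the approach the paper intends: the corollary is stated as a direct consequence of Proposition \ref{p.042701} and Lemma \ref{l.042701}, and you have carried out the bookkeeping accurately in all four cases, including the observation that $m\ge 1$ in $\Delta_{\even}$ ensures $k\to\infty$ as $\ee\to 0^+$.
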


By \eqref{e.042701}, Corollary \ref{c.042701}, the exponentially small factors, and $\delta(j) = \frac{1+(-1)^j}{2}$ for $j=1,2$, it follows that the dominating terms in the splitting distance \eqref{e.042301} are those corresponding to 
\begin{align*}
	\ee^{-\frac32}I_{1,0,\odd}^{(2)}\sim e^{-\frac{1}{3\varepsilon^3}} \varepsilon^{-5},\quad \text{ and  }  \quad \ee^{-\frac72}J_{1,0,\odd}\sim e^{-\frac{1}{3\varepsilon^3}}\ee^{-5}.
\end{align*}
Hence, one has the following 
\begin{proposition}\label{p.043001}
	The  splitting distance ${\mathbb D}_0 (\theta_0)$ in \eqref{e.042301} satisfies
	\begin{align*}
		{\mathbb D}_0 (\theta_0) = -\frac{37}{20}\sqrt{\frac{\pi}{2}}\ee^{-5}e^{-\frac{1}{3\ee^3}}\sin(\theta_0)+O(\ee^{-\frac72}e^{-\frac{1}{3\ee^3}}). 
	\end{align*}
\end{proposition}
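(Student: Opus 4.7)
The plan is to evaluate $\mathbb{D}_0(\theta_0)$ by pinning down the dominant entries in the Fourier-series expansions of $\mathbf{I}$ and $\mathbf{J}$ from Proposition \ref{p.051001} and then applying the Watson's-lemma asymptotics of Lemma \ref{l.042701} to the surviving integrals. The exponential factor $e^{-1/(3\varepsilon^3)}$ comes exclusively from the $m=0$ odd Fourier mode: by Corollary \ref{c.042701}, every other mode is suppressed by at least an extra factor $e^{-2/(3\varepsilon^3)}$ and therefore enters the $O(\varepsilon^{-7/2}e^{-1/(3\varepsilon^3)})$ remainder. Only $\mathbf{I}_0^{\odd}\sin\theta_0$ and $\mathbf{J}_0^{\odd}\sin\theta_0$ remain at leading order.

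\textbf{Step 1.} Within $\mathbf{I}_0^{\odd}$ and $\mathbf{J}_0^{\odd}$, rank the terms by algebraic order using Corollary \ref{c.042701}. After multiplication by the prefactors $-\sqrt{2}/(2\varepsilon\sqrt{\varepsilon})$ and $1/(2\varepsilon^3\sqrt{\varepsilon})$ from \eqref{e.042301}, the leading $\varepsilon^{-5}$ contributions come uniquely from $(k,\ell,j)=(1,0,2)$ in $\mathbf{I}_0^{\odd}$ (coefficient $C^{(2)}_{0,1,0}=99/16$) and from $(k,\ell)=(1,0)$ in $\mathbf{J}_0^{\odd}$ (coefficient $\mathcal{C}_{0,1,0}=-123/8$), as recorded in \eqref{e.051002}. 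All other $(k,\ell)$ in the double sums, together with the Watson's-lemma sub-leading corrections, will have to be absorbed into the $O(\varepsilon^{-7/2}e^{-1/(3\varepsilon^3)})$ remainder via the combinatorial structure of $C^{(j)}_{0,k,\ell}$ and $\mathcal{C}_{0,k,\ell}$ from Lemmas \ref{l.051001}--\ref{l.051002}.

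\textbf{Step 2.} Evaluate the two surviving integrals. By Proposition \ref{p.042701}, $I^{(2)}_{1,0,\odd}$ and $J_{1,0,\odd}$ reduce to $\mathcal{I}_{7,5,0}(k)$ and $\mathcal{I}_{7,5,1}(k)$ with $k=1/(24\varepsilon^3)$. Invoking Lemma \ref{l.042701} with $\Gamma(7/2)=15\sqrt{\pi}/8$, the phases $\mathrm{i}^{-22}=-1$ and $\mathrm{i}^{-19}=\mathrm{i}$, and $k^{5/2}=2^{-15/2}3^{-5/2}\varepsilon^{-15/2}$, a careful simplification yields
\begin{align*}
    \mathbf{I}^{(2)}_{1,0,\odd} &= \frac{2\sqrt{\pi}}{15}\varepsilon^{-7/2}e^{-1/(3\varepsilon^3)} + O\bigl(\varepsilon^{-2}e^{-1/(3\varepsilon^3)}\bigr),\\
    \mathbf{J}_{1,0,\odd} &= \frac{\sqrt{2\pi}}{15}\varepsilon^{-3/2}e^{-1/(3\varepsilon^3)} + O\bigl(e^{-1/(3\varepsilon^3)}\bigr),
\end{align*}
the first being the real part and the second the imaginary part, in accordance with the parity of the leading complex phase in each case.

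\textbf{Step 3.} Substitute into \eqref{e.042301}. The $\mathbf{I}$-contribution is $-\frac{\sqrt{2}}{2}\cdot\frac{99}{16}\cdot\frac{2\sqrt{\pi}}{15}\varepsilon^{-5}=-\frac{33}{80}\sqrt{2\pi}\,\varepsilon^{-5}$, and the $\mathbf{J}$-contribution is $\frac{1}{2}\cdot(-\frac{123}{8})\cdot\frac{\sqrt{2\pi}}{15}\varepsilon^{-5}=-\frac{41}{80}\sqrt{2\pi}\,\varepsilon^{-5}$. Their sum is $-\frac{74}{80}\sqrt{2\pi}\,\varepsilon^{-5}=-\frac{37}{40}\sqrt{2\pi}\,\varepsilon^{-5}=-\frac{37}{20}\sqrt{\pi/2}\,\varepsilon^{-5}$, which, multiplied by $\sin\theta_0\cdot e^{-1/(3\varepsilon^3)}$, is exactly the claimed leading term.

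The main obstacle I expect is the bookkeeping in Step 2: the complex phases $\mathrm{i}^{3(r-n)-m}$, the Gamma factor, and the fractional powers of $2$ and $3$ from $k^{(m-2)/2}$ with $k=1/(24\varepsilon^3)$ must all come out correctly for the rationals $-33/80$ and $-41/80$ to emerge, and since they add (rather than cancel) to form $37/40$, any sign slip or missing factor of $\sqrt{2}$ or $\sqrt{3}$ would spoil the identity $(33+41)/80=37/40$. A secondary obstacle is verifying that all $(k,\ell)$ with $k+\ell\geq 2$ in the double sums, as well as the sub-leading Watson corrections applied to $\mathcal{I}_{7,5,0}$ and $\mathcal{I}_{7,5,1}$, really do collapse into the $O(\varepsilon^{-7/2}e^{-1/(3\varepsilon^3)})$ remainder; this requires the explicit combinatorial identities encoded in Lemmas \ref{l.051001}--\ref{l.051002}.
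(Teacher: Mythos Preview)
Your proposal is correct and follows essentially the same route as the paper's own proof: isolate the $m=0$ odd Fourier mode via Corollary \ref{c.042701}, identify $(k,\ell)=(1,0)$ as the unique leading contribution in both $\mathbf{I}_0^{\odd}$ and $\mathbf{J}_0^{\odd}$ using the coefficients \eqref{e.051002}, compute $\mathbf{I}^{(2)}_{1,0,\odd}$ and $\mathbf{J}_{1,0,\odd}$ through Proposition \ref{p.042701} and Lemma \ref{l.042701}, and combine. Your numerical values for $\mathbf{I}^{(2)}_{1,0,\odd}$ and $\mathbf{J}_{1,0,\odd}$ agree with the paper's (they are the same numbers written as $\frac{2\sqrt{\pi}}{15}$ versus $\frac{(\sqrt{2})^3}{15}\sqrt{\pi/2}$, etc.), and your final arithmetic $-\frac{33}{80}-\frac{41}{80}=-\frac{37}{40}$ is exactly how the leading coefficient arises.
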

\begin{proof}
	Note that 
	\begin{align*}
		&\int_{-\infty}^{\infty}\ee^4a^9\cos(\psi)d\tau = \mathrm{Re}I_{1,0,\odd}^{(2)} = \frac{2(2 \sqrt{2})^{9}\ee^{4}}{-3}\mathrm{Re}\int_{-\infty}^{\infty}\frac{e^{- \frac{\mi}{24\varepsilon^3}z}}{(y(z)+2\mi)^{7}(y(z)-2\mi)^{5}}dz,\\
		&\int_{-\infty}^{\infty}\ee^6a^7b\sin(\psi)d\tau= \mathrm{Im} J_{1,0,\odd}=\frac{2(2 \sqrt{2})^{8}\varepsilon^{6} }{3}\mathrm{Im} \int_{-\infty}^{\infty}\frac{e^{- \frac{\mi}{24\varepsilon^3}z}y(z)}{(y(z)+2\mi)^{7}(y(z)-2\mi)^{5}}dz. 
	\end{align*}
	By Lemma \ref{l.042701}, we have 
	\begin{align*}
		\int_{-\infty}^{\infty}\frac{e^{- \frac{\mi}{24\varepsilon^3}z}}{(y(z)+2\mi)^{7}(y(z)-2\mi)^{5}}dz 
		& = -\frac{1}{2^{13}\cdot 5}\sqrt{\frac{\pi}{2}}\ee^{-\frac{15}{2}}e^{-\frac{1}{3\ee^3}}+O(\ee^{-6}e^{-\frac{1}{3\ee^3}}),\\
		\int_{-\infty}^{\infty}\frac{e^{- \frac{\mi}{24\varepsilon^3}z}y(z)}{(y(z)+2\mi)^{7}(y(z)-2\mi)^{5}}dz& = \frac{\mi}{2^{12}\cdot 5}\sqrt{\frac{\pi}{2}}\ee^{-\frac{15}{2}}e^{-\frac{1}{3\ee^3}}+O(\ee^{-6}e^{-\frac{1}{3\ee^3}}).
	\end{align*}
	Therefore, 
	\begin{align*}
		&\mb I_{1,0,\odd}^{(2)} = \int_{-\infty}^{\infty}\ee^4a^9\cos(\psi)d\tau =\frac{(\sqrt{2})^{3}}{3\cdot 5}\sqrt{\frac{\pi}{2}}\ee^{-\frac{7}{2}}e^{-\frac{1}{3\ee^3}}+O(\ee^{-2}e^{-\frac{1}{3\ee^3}}),\\
		&\mb J_{1,0,\odd} = \int_{-\infty}^{\infty}\ee^6a^7b\sin(\psi)d\tau= \frac{2}{3\cdot 5}\sqrt{\frac{\pi}{2}}\ee^{-\frac{3}{2}}e^{-\frac{1}{3\ee^3}}+O(e^{-\frac{1}{3\ee^3}}).
	\end{align*}
	By \eqref{e.042301}, Proposition \ref{p.051001},  Corollary \ref{c.042701} and \eqref{e.051002}, the dominating term is then 
	\begin{align*}
		D & = \sin(\theta_0)\left(-\frac{\sqrt{2}}{2\ee\sqrt{\ee}}C_{0,1,0}^{(2)}\mb I_{1,0,\odd}^{(2)} + \frac{1}{2\ee^3\sqrt{\ee}}\mc C_{0,1,0}\mb J_{1,0,\odd}\right)\\
		&= \sin(\theta_0)\left(-\frac{99\sqrt{2} }{32\varepsilon\sqrt{\varepsilon} }\int_{-\infty}^{\infty}\ee^4a^9\cos(\psi)d\tau + \frac{1}{2 \varepsilon^3 \sqrt{\varepsilon} } \cdot\left(-\frac{123}{8}\right)\int_{-\infty}^{\infty}\ee^6a^7b\sin(\psi)d\tau\right)\\
		& = -\frac{37}{20}\sqrt{\frac{\pi}{2}}\ee^{-5}e^{-\frac{1}{3\ee^3}}\sin(\theta_0)+O(\ee^{-\frac72}e^{-\frac{1}{3\ee^3}}).
	\end{align*}
\end{proof}

\medskip

We are now ready to prove the main result of this section. 
\begin{proof}[Proof of Proposition \ref{prop3.1}]
	The Fourier expansion of ${\mathbb D}_0(\theta_0)$ is a sine series, which implies ${\mathbb D}_0(0) = 0$. By Proposition \ref{p.043001}, we  have 
	$$
	\left|\partial_{\theta_0} {\mathbb D}_0(0)\right| \geq K^{-1}\ee^{-5} e^{-  \frac{1}{3\varepsilon^3} } \left(1 + O(\varepsilon^{3/2})\right). 
	$$
	What is claimed in Proposition \ref{prop3.1} follows directly from this estimate and the fact that ${\mathbb D}(\theta_0, \rho, \varepsilon)$ is real analytic in $\rho$ at $\rho = 0$.

\end{proof}

\appendix

\section{Expansions in $\ee$}\label{s.050501}
In this section, we collect several relevant series expansions that will be used in the proof of Proposition \ref{props2.2} in the next section. Our main tool is the binomial theorem, 
\begin{equation*}
	(1+x)^{\alpha} = 1 +\alpha x + \sum_{n=2}^{\infty} \binom{\alpha}{n} x^n, \quad |x| < 1,
\end{equation*}
where the generalized binomial coefficient is defined by
\[
	\binom{\alpha}{n}=\frac{\Gamma(\alpha+1)}{\Gamma(n+1) \Gamma(\alpha-n+1)}= \frac{1}{n!} \alpha (\alpha-1)\cdots (\alpha - (n-1)).
\]
In what follows, we assume that $\varepsilon_0$ is sufficiently small so that the condition $|x| < 1$ is always satisfied whenever this expansion is applied. 

\vskip0.05in

Moreover, we use $O(1)$ to denote a generic real analytic function in the variables $X, Y, \theta$ and $\rho, \varepsilon$, where $(\rho, \varepsilon) \in D_{\ee_0} = (-\varepsilon_0, 1/2 + \varepsilon_0) \times (0, \varepsilon_0)$, and $(X, Y) \in D_{\ell}^+$ (see the definition of $D_{\ell}^+$ below \eqref{e.050402}), and $\theta_0 \in \mathbb{R}$, such that the $C^1$-norm of this function is uniformly bounded by a constant independent of the parameters.

\subsection{Expansions of $R_{13}$ and $R_{23}$} For the reader's convenience, we recall the formulas of $R_{13}$ and $R_{23}$ from \eqref{e.050403}, 
\begin{equation*}
	\begin{split}
		R_{13} = &  \sqrt{1+\rho^2\varepsilon^4 U^4 X^4 + 2 \rho \varepsilon^2 U^2 X^2 \cos \theta },  \\
		R_{23} = &  \sqrt{1 + (1-\rho)^2 \varepsilon^4  U^4 X^4 -2 (1-\rho) \varepsilon^2 U^2 X^2 \cos \theta }. 
	\end{split}
\end{equation*}
We have 
\begin{align}\label{e.050404}
	\begin{split}
		R_{13}^{-k} & = 1-\frac{k}{2}\left(\rho^2\varepsilon^4 U^4 X^4 + 2 \rho \varepsilon^2 U^2 X^2 \cos \theta\right) + \sum_{j=2}^{\infty}\binom{-\frac{k}{2}}{j}\left(\rho^2\varepsilon^4 U^4 X^4 + 2 \rho \varepsilon^2 U^2 X^2 \cos \theta\right)^{j}\\
		& = 1- k\rho \varepsilon^2 U^2 X^2 \cos \theta +\left(-\frac{k}{2}+4\binom{-\frac{k}{2}}{2}\cos^2\theta\right)\rho^2\varepsilon^4 U^4 X^4 +\ee^6U^6X^6O(1), 
	\end{split}
\end{align}
and similarly, 
\begin{align}\label{e.050405}
	R_{23}^{-k} = 1+k(1-\rho) \varepsilon^2 U^2 X^2 \cos \theta +\left(-\frac{k}{2}+4\binom{-\frac{k}{2}}{2}\cos^2\theta\right)(1-\rho)^2\varepsilon^4 U^4 X^4+  \ee^6U^6X^6O(1).
\end{align}
\subsection{Expansions of $U,F,G$}
By equation \eqref{Jacobi1} for the Jacobi integral and expansions \eqref{e.050404}-\eqref{e.050405} with $k=1$, we have 
\begin{align*}
	U & = 1- \varepsilon^3 U^3 Y^2 - \frac{1}{2} \varepsilon^3  U^3 X^4   + (1-\rho) \varepsilon^3  U^3 X^2 R_{13}^{-1}+ \rho \varepsilon^3  U^3 X^2 R_{23}^{-1}\\
	&= 1+ \ee^3U^3\left(X^2-Y^2-\frac12X^4\right) + \left(-\frac{1}{2}\sin^2\theta+\cos^2\theta\right)\rho(1-\rho)\ee^7U^7X^6+ \ee^9U^9X^8O(1).
\end{align*}
Furthermore, by letting 
\begin{align*}
	U_3 &= X^2-Y^2-\frac12X^4, \\
	U_7& = \left(-\frac{1}{2}\sin^2\theta+\cos^2\theta\right)\rho(1-\rho)X^6,
\end{align*}
one obtains the following expansion with coefficients independent of $U$ itself, 
\begin{align}\label{e.060406}
	U^k  = 1+kU_3\ee^3+\frac{k(k+5)}{2}U_3^2\ee^6+kU_7\ee^7+\left(U_3^2O(1) + X^8O(1)\right)\ee^9.
\end{align}

Next, it follows from \eqref{e.050302}, expansions \eqref{e.050404}-\eqref{e.050405} with $k=3$ and \eqref{e.060406} that 
\begin{align*}
	F &=  1 -    (1-\rho) \left(1   +\rho \varepsilon^2 U^2 X^2  \cos \theta\right)R_{13}^{-3} - \rho \left(1   -(1-\rho)  \varepsilon^2 U^2 X^2 \cos \theta \right)R_{23}^{-3}\\
	&= \frac32\rho(1-\rho)\left(1-3\cos^2\theta\right)\ee^4X^4+\ee^7X^4\left(X^2-Y^2-\frac{1}{2}X^4\right)O(1) +\ee^6X^6O(1),
\end{align*}
and 
\begin{align*}
	G &=   \rho(1-\rho)   \sin \theta \left(R_{13}^{-3} - R_{23}^{-3} \right)\\
	& = - 3\rho(1-\rho)   \sin \theta \cos \theta \varepsilon^2 X^2 + \rho(1-\rho)   \sin \theta \left( -\frac{3}{2}\sin^2\theta + 6\cos^2\theta\right)(2\rho-1)\varepsilon^4 X^4\\
	&\quad -3\rho(1-\rho)   \sin \theta \cos \theta\left(2(X^2-Y^2)-X^4\right)\ee^5X^2 + \ee^6X^2O(1).
\end{align*}
The following corollary is a direct consequence of the above expansions and the change of variables 
\begin{align}\label{e.050410}
	x= X-a, \, y= Y-b, \,  \Theta = \theta - \theta_0-\psi,
\end{align}
from \eqref{e.050407}. 
\begin{lemma}\label{lem2.4}
	For $F_{\theta_0, \psi}, G_{\theta_0, \psi}$ and $U_{\theta_0, \psi}$ from \eqref{SPQS1}, we have 
	\begin{align*}
		F_{\theta_0, \psi} &= \frac32\rho(1-\rho)\left(1-3\cos^2(\Theta + \theta_0 + \psi)\right)\ee^4(x+a)^4\\
		&\quad +\ee^7(x+a)^4U_3O(1) +\ee^6(x+a)^6O(1),\\
		G_{\theta_0, \psi}&= - \frac32\rho(1-\rho) \varepsilon^2 (x+a)^2  \sin 2(\Theta + \theta_0 + \psi) \\
		&\quad + \rho(1-\rho) (1-2\rho) \sin (\Theta + \theta_0 + \psi) \left(\frac{15}{2}\sin^2(\Theta + \theta_0 + \psi) - 6\right)\varepsilon^4 (x+a)^4\\
		&\quad -3\rho(1-\rho)  \sin2(\Theta + \theta_0 + \psi)\ee^5(x+a)^2U_3 + \ee^6(x+a)^2U_3^2O(1),\\
		U_{\theta_0, \psi}&=  1+ U_3\ee^3 + 3U_3^2\ee^6+U_7\ee^7+3U_3^2\ee^9O(1) + \ee^9X^8O(1),
	\end{align*}
	where
	\begin{align*}
		U_3 &= (x+a)^2-(y+b)^2-\frac12(x+a)^4, \\
		U_7& = \left(1-\frac{3}{2}\sin^2(\Theta + \theta_0 + \psi) \right)\rho(1-\rho)(x+a)^6.
	\end{align*}
\end{lemma}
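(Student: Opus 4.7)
The plan is to treat Lemma \ref{lem2.4} as a mechanical substitution exercise: the bulk of the work has already been done in the pre-lemma calculations of this appendix, where $F$, $G$ and $U^k$ were expanded as power series in $\varepsilon$ in the variables $X,Y,\theta$. What remains is to apply the change of variables \eqref{e.050410}, namely $X=x+a$, $Y=y+b$, $\theta=\Theta+\theta_0+\psi$, and to repackage the resulting expressions in a form that makes the leading-order $\rho$-dependence and the $\varepsilon$-order of the remainders explicit.

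First I would substitute $X\mapsto x+a$ and $\theta\mapsto\Theta+\theta_0+\psi$ into the expansion of $F$ derived just above the lemma. The leading term $\tfrac32\rho(1-\rho)(1-3\cos^2\theta)\varepsilon^4 X^4$ becomes $\tfrac32\rho(1-\rho)(1-3\cos^2(\Theta+\theta_0+\psi))\varepsilon^4 (x+a)^4$ directly. The remaining $\varepsilon^7$ and $\varepsilon^6$ contributions are grouped into $\varepsilon^7(x+a)^4 U_3\,O(1)$ and $\varepsilon^6(x+a)^6 O(1)$, where $U_3=(x+a)^2-(y+b)^2-\tfrac12(x+a)^4$ is the image of $X^2-Y^2-\tfrac12 X^4$ under the change of coordinates. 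For $G$ the same substitution is done, and the trigonometric simplification $-3\sin\theta\cos\theta=-\tfrac32\sin 2\theta$ produces the leading $\varepsilon^2$ term in the claimed form. The $\varepsilon^4$ correction requires rewriting $\sin\theta(-\tfrac32\sin^2\theta+6\cos^2\theta)$ using $\cos^2\theta=1-\sin^2\theta$ as $\sin\theta(\tfrac{15}{2}\sin^2\theta-6)$ up to the sign that is absorbed into the $(2\rho-1)$ factor; the $\varepsilon^5$ and $\varepsilon^6$ remainders again get packaged with $U_3$ and $U_3^2$.

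Next I would turn to $U_{\theta_0,\psi}$. From \eqref{e.060406} with $k=1$, the expansion
\[
U=1+U_3\varepsilon^3+\tfrac{1}{2}\cdot 1\cdot 6\, U_3^2\,\varepsilon^6+U_7\varepsilon^7+\bigl(U_3^2\,O(1)+X^8 O(1)\bigr)\varepsilon^9
\]
gives, under the substitution and using $U_7=(1-\tfrac32\sin^2(\Theta+\theta_0+\psi))\rho(1-\rho)(x+a)^6$, exactly the stated formula. The coefficient $3$ in front of $U_3^2\varepsilon^6$ is just $\tfrac{k(k+5)}{2}$ at $k=1$, so no surprise appears.

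The only place where some care is warranted—and this is the main obstacle—is verifying that the error terms in the original $(X,Y,\theta)$-expansions can be legitimately reorganized as $U_3$- and $(x+a)$-weighted $O(1)$ after the substitution, in particular keeping track of the fact that $(x+a)^{2k}$-factors replace $X^{2k}$ and that $U_3$ itself absorbs the former $X^2-Y^2-\tfrac12 X^4$ block. Once one checks that each remainder class is closed under the substitution (which is clear since $a,b,\psi$ depend only on $\tau$ and thus are incorporated into the $O(1)$ factor), the three formulas drop out. No new estimates are needed beyond those already established in this appendix, so the argument truly is a one-line corollary, and I would present it as such.
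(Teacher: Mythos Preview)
Your proposal is correct and follows essentially the same approach as the paper: the paper states Lemma~\ref{lem2.4} as ``a direct consequence of the above expansions and the change of variables~\eqref{e.050410}'' with no further argument, and you have simply unpacked that sentence, correctly identifying the trigonometric rewritings (in particular $-\tfrac{3}{2}\sin^2\theta+6\cos^2\theta=-(\tfrac{15}{2}\sin^2\theta-6)$ together with $(2\rho-1)=-(1-2\rho)$) and the specialization of~\eqref{e.060406} at $k=1$ that produce the displayed leading terms.
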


\section{Proof of Proposition \ref{props2.2}}\label{s2.4}
In this section we prove Proposition \ref{props2.2}, which will follow after we establish two auxiliary lemmas by using the expansions obtained in the previous section. Recall from \eqref{e.050408} that 
\begin{equation*}
	{\mathbb X} =\frac{1}{a}\left(b  {\mathbb W} - aH {\mathbb M}\right), \ \ \ \ {\mathbb Y} = \frac{1}{a}\left(b' {\mathbb W} -a\widetilde{H}  {\mathbb M}\right).
\end{equation*}
Therefore, the relation \eqref{MWxy} and rescaling \eqref{e.050406} imply that 
\begin{align}\label{e.050409}
	x= \ee^3\sqrt{\ee}a\bx, \quad y= \ee^3\sqrt{\ee}a\by.
\end{align}
The following lemma is a direct consequence of this relation and Lemma \ref{lem2.4}. Recall that ${\mathbb D}_{\ell}$ is defined in \eqref{domainbb} and $D_{\ee_0}$ in \eqref{e.051102}. 

\begin{lemma}\label{l.050401}
On ${\mathbb D}_{\ell}\times \mb R \times D_{\ee_0}$,  we have 
\begin{align*}
	F_{\theta_0, \psi} &= \frac32\rho(1-\rho)\left(1-3\cos^2(\Theta + \theta_0 + \psi)\right)\ee^4a^4(\varepsilon^3\sqrt{\varepsilon}  {\mathbb X}+1)^4\\
	&\quad +\ee^7a^4(\varepsilon^3\sqrt{\varepsilon}  {\mathbb X}+1)^4U_3\mb O(1) +\ee^6a^6(\varepsilon^3\sqrt{\varepsilon}  {\mathbb X}+1)^6\mb O(1),\\
	G_{\theta_0, \psi}&= - \frac32\rho(1-\rho) \varepsilon^2a^2 (\varepsilon^3\sqrt{\varepsilon}  {\mathbb X}+1)^2  \sin 2(\Theta + \theta_0 + \psi) \\
	&\quad+ \rho(1-\rho) (1-2\rho) \sin (\Theta + \theta_0 + \psi) \left(\frac{15}{2}\sin^2(\Theta + \theta_0 + \psi) - 6\right)\varepsilon^4a^4 (\varepsilon^3\sqrt{\varepsilon}  {\mathbb X}+1)^4\\
	&\quad -3\rho(1-\rho)  \sin2(\Theta + \theta_0 + \psi)\ee^5a^2(\varepsilon^3\sqrt{\varepsilon}  {\mathbb X}+1)^2U_3 + \ee^6a^2(\varepsilon^3\sqrt{\varepsilon}  {\mathbb X}+1)^2U_3^2\mb O(1),\\
	U_{\theta_0, \psi}&=  1+ U_3\ee^3 + 3U_3^2\ee^6+U_7\ee^7+3U_3^2\ee^9\mb O(1) + \ee^9X^8\mb O(1),
\end{align*}
where
\begin{align*}
	U_3 &= a\varepsilon^3\sqrt{\varepsilon} ( a{\mathbb X}+b{\mathbb Y})\mathbb O(1),\\
	U_7& = \left(1-\frac{3}{2}\sin^2(\Theta + \theta_0 + \psi) \right)\rho(1-\rho)(\varepsilon^3\sqrt{\varepsilon}  {\mathbb X}+1)^6a^6.
\end{align*}
\end{lemma}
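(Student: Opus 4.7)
The plan is to obtain the claimed expansions by direct substitution of the rescaling $x=\varepsilon^3\sqrt{\varepsilon}\,a\,\mathbb{X}$, $y=\varepsilon^3\sqrt{\varepsilon}\,a\,\mathbb{Y}$ from \eqref{e.050409} into the formulas for $F_{\theta_0,\psi}$, $G_{\theta_0,\psi}$, and $U_{\theta_0,\psi}$ already derived in Lemma~\ref{lem2.4}. The key algebraic simplification is the identity
\[
x+a \;=\; a\bigl(\varepsilon^3\sqrt{\varepsilon}\,\mathbb{X}+1\bigr),
\]
which converts every occurrence of $(x+a)^k$ in Lemma~\ref{lem2.4} into $a^k(\varepsilon^3\sqrt{\varepsilon}\,\mathbb{X}+1)^k$. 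Applied termwise to the trigonometric-polynomial parts of $F_{\theta_0,\psi}$ and $G_{\theta_0,\psi}$, this reproduces the explicit leading terms in the stated expansions; the remainder contributions from Lemma~\ref{lem2.4} are then repackaged as $\mathbb{O}(1)$ remainders in the sense defined just above the statement of Proposition~\ref{props2.2}.

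The one step that requires genuine computation is the treatment of $U_3=(x+a)^2-(y+b)^2-\tfrac12(x+a)^4$ after substitution, since to leading order in $\varepsilon$ this quantity equals $a^2-b^2-\tfrac12 a^4$, which \emph{vanishes identically} along the parabolic solution by \eqref{e.050303} (equivalently, the conservation law $b^2=a^2-\tfrac12 a^4$ for the unperturbed Duffing separatrix). Expanding the squares and the fourth power and using this cancellation gives
\[
U_3 \;=\; 2a\,\varepsilon^3\sqrt{\varepsilon}\bigl((1-a^2)\,a\,\mathbb{X}\;-\;b\,\mathbb{Y}\bigr)\;+\;\varepsilon^7 a^2\,\mathbb{O}(1),
\]
which exhibits the required prefactor $a\,\varepsilon^3\sqrt{\varepsilon}$ with the $\mathbb{O}(1)$ factor absorbing both the specific linear combination of $a\mathbb{X}$ and $b\mathbb{Y}$ and all higher-order corrections on the bounded domain $\mathbb{D}_\ell$. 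The formula for $U_7$ is immediate from $(x+a)^6=a^6(\varepsilon^3\sqrt{\varepsilon}\mathbb{X}+1)^6$, and the full expansion of $U_{\theta_0,\psi}$ then follows by plugging these expressions for $U_3$ and $U_7$ into \eqref{e.060406}.

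The main obstacle, though largely bookkeeping, is to verify that every resulting $\mathbb{O}(1)$ remainder is real analytic and $C^1$-bounded uniformly on $\mathbb{D}_\ell\times\mathbb{R}\times D_{\varepsilon_0}$. This reduces to three checks: first, $\mathbb{X}$ and $\mathbb{Y}$, defined through \eqref{e.050408} as linear combinations of $\mathbb{M}$ and $\mathbb{W}$ with coefficients $b/a$, $H$, $b'/a$, $\widetilde{H}$, remain bounded on $\mathbb{D}_\ell$ because the latter four functions are uniformly bounded on $\mathbb{R}$ by the remarks following \eqref{RHtildeH}; second, $|\varepsilon^3\sqrt{\varepsilon}\,\mathbb{X}|$ is uniformly small for $\varepsilon<\varepsilon_0$ with $\varepsilon_0$ sufficiently small, so that the binomial expansions of $(1+\varepsilon^3\sqrt{\varepsilon}\mathbb{X})^k$ converge with uniformly bounded tails; and third, the $\rho$-dependent prefactors are bounded for $\rho\in(-\varepsilon_0,1/2+\varepsilon_0)$. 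With these three uniform bounds in hand, the $O(1)$ functions inherited from Lemma~\ref{lem2.4} are promoted to the desired $\mathbb{O}(1)$ functions, completing the proof.
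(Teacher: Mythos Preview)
Your proposal is correct and follows the same route as the paper, which disposes of the lemma in a single sentence (``a direct consequence of this relation and Lemma~\ref{lem2.4}''). You have simply unpacked that sentence: the substitution $(x+a)=a(\varepsilon^3\sqrt{\varepsilon}\,\mathbb X+1)$ handles every explicit term of Lemma~\ref{lem2.4} mechanically, and your observation that the zeroth-order part of $U_3$ vanishes by the separatrix identity $b^2=a^2-\tfrac12 a^4$ is exactly the hidden computation that justifies the stated form $U_3=a\varepsilon^3\sqrt{\varepsilon}(a\mathbb X+b\mathbb Y)\,\mathbb O(1)$. Your three bookkeeping checks on the $\mathbb O(1)$ remainders are sound and make explicit what the paper leaves implicit.
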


The next lemma gives the desired expansions of $S,P,Q$ from \eqref{SPQS1} in  terms of the rescaled variables. 
\begin{lemma}\label{l.050403}
	We have
	\begin{equation}\label{SPQSS} 
	\begin{split}
	S = &  a^3 \varepsilon^3\sqrt{\varepsilon}\left[{\mathbb S}_1({\mathbb X}, {\mathbb Y}) +  a^4 \mb S \right],\\
	P  = & a^4 \varepsilon^4 {\mathbb P}({\mathbb X}, {\mathbb Y}, \Theta), \\
	Q = &  a^4 \varepsilon^4 {\mathbb Q}({\mathbb X}, {\mathbb Y}, \Theta) -  a^3 \varepsilon^{10} \sqrt{\varepsilon} {\mathbb X}^3 - 3a^3\varepsilon^{7}{\mathbb X}^2,
	\end{split}
	\end{equation}
	where 	${\mathbb S}_1({\mathbb X}, {\mathbb Y})$ is a polynomial in ${\mathbb X}, {\mathbb Y}$ of uniformly bounded coefficient, which is independent of $\Theta$. 
	For ${\mathbb S}={\mathbb S}({\mathbb X}, {\mathbb Y}, \Theta)$, ${\mathbb P} = 	{\mathbb P}({\mathbb X}, {\mathbb Y}, \Theta)$, ${\mathbb Q} = 	{\mathbb Q}({\mathbb X}, {\mathbb Y}, \Theta)$, we have
	\begin{equation} 
	\begin{split}
	{\mathbb S} &=  \varepsilon^3\sqrt{\ee} a^2 {\mathbb O}(1),\\
	{\mathbb P}  &=  -\frac{3 \sqrt{2}}{2} \rho (1-\rho)   (\varepsilon^3\sqrt{\varepsilon} {\mathbb X}+1)^4  \sin 2 (\Theta +\psi + \theta_0) +\varepsilon^2 a^2 {\mathbb O}(1), \\
	{\mathbb Q} &=   -\frac{3\sqrt{2}}{2}   \rho (1-\rho)  (\varepsilon^3\sqrt{\varepsilon}  {\mathbb X}+1)^3(\varepsilon^3\sqrt{\varepsilon}  {\mathbb Y}+ ba^{-1}) \sin 2(\Theta  + \psi+ \theta_0)  +  a  {\mathbb O}(1).
	\end{split}
	\end{equation}
\end{lemma}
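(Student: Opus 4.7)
The proof is a direct substitution-and-reorganization calculation. I would substitute the rescaling $x = \varepsilon^3\sqrt{\varepsilon}\,a{\mathbb X}$, $y = \varepsilon^3\sqrt{\varepsilon}\,a{\mathbb Y}$ from \eqref{e.050409} into the defining formulas \eqref{SPQS1} for $S, P, Q$, expand every occurrence of $U_{\theta_0,\psi}$, $F_{\theta_0,\psi}$, $G_{\theta_0,\psi}$ using Lemma \ref{l.050401}, and sort the resulting terms by two criteria: whether they depend on $\Theta$, and what powers of $a$ and $\varepsilon$ they carry. Every $\Theta$-independent polynomial-in-${\mathbb X},{\mathbb Y}$ contribution with uniformly bounded coefficients is absorbed into ${\mathbb S}_1$ (for $S$), or appears as the explicit monomials $-a^3\varepsilon^{10}\sqrt{\varepsilon}{\mathbb X}^3$ and $-3a^3\varepsilon^7{\mathbb X}^2$ (for $Q$). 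Every $\Theta$-dependent remainder must be shown to carry exactly the prescribed extra factor of $a$ (and, in ${\mathbb S}$, the further factor $\varepsilon^3\sqrt{\varepsilon}a^2$) required by the claim.

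For $S$, the polynomial part $x^3 + 3ax^2 + 3a^2 x = (x+a)^3 - a^3 = a^3\varepsilon^3\sqrt{\varepsilon}\bigl(3{\mathbb X} + 3\varepsilon^3\sqrt{\varepsilon}{\mathbb X}^2 + \varepsilon^7{\mathbb X}^3\bigr)$ contributes directly to ${\mathbb S}_1$. Using the expansion $U_{\theta_0,\psi}^3 - 1 = 3U_3\varepsilon^3 + 12U_3^2\varepsilon^6 + 3U_7\varepsilon^7 + \cdots$ from \eqref{e.060406} together with Lemma \ref{l.050401} and the identity $a^2 - b^2 - \tfrac12 a^4 = 0$, both $U_3$ and $U_3^2$ are $\Theta$-independent and polynomial in ${\mathbb X},{\mathbb Y}$ with bounded coefficients, so $3U_3\varepsilon^3(x+a)^3$ and $12U_3^2\varepsilon^6(x+a)^3$ land in ${\mathbb S}_1$. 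The first $\Theta$-dependent contribution is $3U_7\varepsilon^7(x+a)^3$; since $U_7 = \bigl(1 - \tfrac32\sin^2(\Theta+\theta_0+\psi)\bigr)\rho(1-\rho)(1+\varepsilon^3\sqrt{\varepsilon}{\mathbb X})^6 a^6$ already carries $a^6$, the product is exactly of the form $a^3\varepsilon^3\sqrt{\varepsilon}\cdot a^4\cdot \varepsilon^3\sqrt{\varepsilon}\,a^2{\mathbb O}(1) = a^3\varepsilon^3\sqrt{\varepsilon}\cdot a^4{\mathbb S}$. The remaining term $\varepsilon^3 U^3 x(x+a)^3 a^3$ is treated identically: its $\Theta$-independent leading part joins ${\mathbb S}_1$, and the $U_7$-driven $\Theta$-dependent correction is of strictly higher $a$-order.

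For $P$ and $Q$, Lemma \ref{l.050401} identifies the leading contribution to $G_{\theta_0,\psi}$ as $-\tfrac32 \rho(1-\rho)\varepsilon^2 a^2(1+\varepsilon^3\sqrt{\varepsilon}{\mathbb X})^2\sin 2(\Theta+\theta_0+\psi)$, with corrections smaller by at least $\varepsilon^2 a^2$. Multiplying by $\sqrt{2}\varepsilon^2 U^2(x+a)^2 = \sqrt{2}\varepsilon^2 a^2(1+\varepsilon^3\sqrt{\varepsilon}{\mathbb X})^2(1+O(\varepsilon^3))$ immediately gives the stated ${\mathbb P}$ after extracting $a^4\varepsilon^4$. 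For $Q$, the monomial part $-x^3 - 3ax^2$ extracts at once as $-a^3\varepsilon^{10}\sqrt{\varepsilon}{\mathbb X}^3 - 3a^3\varepsilon^7{\mathbb X}^2$. The term $-(x+a)F_{\theta_0,\psi}$ carries an extra factor of $a$ (since $F_{\theta_0,\psi}\sim \varepsilon^4 a^4$, the product is of order $\varepsilon^4 a^5$), which after pulling out $a^4\varepsilon^4$ is absorbed into the $a{\mathbb O}(1)$ of ${\mathbb Q}$. Finally, $\sqrt{2}\varepsilon^2 U^2(x+a)(y+b)G_{\theta_0,\psi}$ produces the $\sin 2(\Theta+\theta_0+\psi)$ term: writing $y+b = a(\varepsilon^3\sqrt{\varepsilon}{\mathbb Y}+ba^{-1})$ naturally yields the combination $\varepsilon^3\sqrt{\varepsilon}{\mathbb Y} + ba^{-1}$ appearing in the statement.

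There is no conceptual obstacle here; the only difficulty is meticulous bookkeeping needed to verify that each $\Theta$-dependent remainder carries the precise extra powers of $a$ (and $\varepsilon$) that the statement demands. Structurally, this works because the $\Theta$-dependence in $U$, $F$, and $G$ enters only through terms already carrying the highest available powers of $(x+a)$, so the $\Theta$-dependent contributions inherit correspondingly high powers of $a$ under the rescaling — a feature that is ultimately what allows the singularity bookkeeping in the subsequent iteration scheme of Section \ref{s2.3} to close.
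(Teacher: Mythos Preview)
Your proposal is correct and follows essentially the same approach as the paper: substitute the rescaling \eqref{e.050409} into \eqref{SPQS1}, expand $U_{\theta_0,\psi}$, $F_{\theta_0,\psi}$, $G_{\theta_0,\psi}$ via Lemma \ref{l.050401} and \eqref{e.060406}, and segregate the $\Theta$-independent $U_3$-contributions (into ${\mathbb S}_1$) from the $\Theta$-dependent $U_7$-contributions (into ${\mathbb S}$), using $U_7\sim a^6$ to obtain the extra $a^4\cdot\varepsilon^3\sqrt{\varepsilon}a^2$ factor. Your treatment of $P$ and $Q$ likewise matches the paper's, including the extraction of the explicit monomials from $-x^3-3ax^2$ and the observation that $-(x+a)F_{\theta_0,\psi}\sim\varepsilon^4 a^5$ supplies the extra factor of $a$ in ${\mathbb Q}$.
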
 
\begin{proof} 
By \eqref{SPQS1}, \eqref{e.060406}, \eqref{e.050410} and \eqref{e.050409}, we have 
\begin{align*}
	S & = a^3(\ee^3\sqrt{\ee}\bx+1)^3-a^3\\
	&\quad +3a^3\Bigg(U_3\ee^3+4U_3^2\ee^6+U_{7}\ee^7+\left(U_3^2O(1) + X^8O(1)\right)\ee^9\Bigg)(\ee^3\sqrt{\ee}\bx+1)^3\\
	&\quad + \ee^6\sqrt{\ee}a^7\bx(\ee^3\sqrt{\ee}\bx+1)^3\\
	&\quad + 3\ee^6\sqrt{\ee}a^7\bx(\ee^3\sqrt{\ee}\bx+1)^3\Bigg(U_3\ee^3+4U_3^2\ee^6+U_{7}\ee^7+\left(U_3^2O(1) + X^8O(1)\right)\ee^9\Bigg)\\
	& =a^3\ee^3\sqrt{\ee}\bs_1+\ee^3\sqrt{\ee}a^7\bs,
\end{align*}
where 
\begin{align*}
	a^3\ee^3\sqrt{\ee}\bs_1 &= a^3(\ee^3\sqrt{\ee}\bx+1)^3-a^3+3a^3\Bigg(U_3\ee^3+4U_3^2\ee^6+U_3^2O(1)\ee^9\Bigg)(\ee^3\sqrt{\ee}\bx+1)^3\\
	&\hspace*{-1em}+ \ee^6\sqrt{\ee}a^7\bx(\ee^3\sqrt{\ee}\bx+1)^3+3\ee^6\sqrt{\ee}a^7\bx(\ee^3\sqrt{\ee}\bx+1)^3\Bigg(U_3\ee^3+4U_3^2\ee^6+U_3^2O(1)\ee^9\Bigg),
\end{align*}
and 
\begin{align*}
	\ee^3\sqrt{\ee} a^7\bs &= 3a^3\Bigg(U_{7}\ee^7 + X^8O(1)\ee^9\Bigg)(\ee^3\sqrt{\ee}\bx+1)^3\\
	&\quad + 3\ee^6\sqrt{\ee}a^7\bx(\ee^3\sqrt{\ee}\bx+1)^3\Bigg(U_{7}\ee^7 + X^8O(1)\ee^9\Bigg).
\end{align*}
Here $U_3^2O(1)$ is independent of $\theta$. Note that, since 
\begin{align*}
	U_{7} = \left(1-\frac{3}{2}\sin^2(\theta+\theta_0)\right)\rho(1-\rho)X^6
\end{align*}
and 
\[X^6=(x+a)^6 = (\ee^3\sqrt{\ee}a\bx+a)^6 = a^6(\ee^3\sqrt{\ee}\bx+1)^6,\]
one has $\ee^3\sqrt{\ee}a^7\bs\sim a^9\ee^7\mathbb O(1)$, which implies that ${\mathbb S} =  \varepsilon^3\sqrt{\ee} a^2 {\mathbb O}(1)$. 

\medskip

For $P$, it follows from \eqref{SPQS1}, \eqref{e.050409} and Lemma \ref{l.050401}  that 
\begin{align*}
	&P  =  \sqrt{2} \varepsilon^2 U_{\theta_0, \psi}^2 (x+a)^2   G_{\theta_0, \psi}\\
	& = \sqrt{2} \varepsilon^2a^2\left(1+2U_3\ee^3+7U_3^2\ee^6+2U_7\ee^7+\left(U_3^2O(1) + X^8O(1)\right)\ee^9\right)(\varepsilon^3\sqrt{\varepsilon}  {\mathbb X}+1)^2 \\
	&\cdot\Bigg(- \frac32\rho(1-\rho) \varepsilon^2a^2 (\varepsilon^3\sqrt{\varepsilon}  {\mathbb X}+1)^2  \sin 2(\Theta + \theta_0 + \psi) \\
	&\quad  + \rho(1-\rho) (1-2\rho) \sin (\Theta + \theta_0 + \psi) \left(\frac{15}{2}\sin^2(\Theta + \theta_0 + \psi) - 6\right)\varepsilon^4a^4 (\varepsilon^3\sqrt{\varepsilon}  {\mathbb X}+1)^4\\
	&\quad -3\rho(1-\rho)  \sin2(\Theta + \theta_0 + \psi)\ee^5a^2(\varepsilon^3\sqrt{\varepsilon}  {\mathbb X}+1)^2U_3 + \ee^6a^2(\varepsilon^3\sqrt{\varepsilon}  {\mathbb X}+1)^2U_3^2O(1)\Bigg)\\
	& = - \frac{3\sqrt{2}}{2}\rho(1-\rho)\varepsilon^4a^4(\varepsilon^3\sqrt{\varepsilon}  {\mathbb X}+1)^4\sin 2(\Theta + \theta_0 + \psi)+\varepsilon^4 a^6 {\mathbb O}(1).
\end{align*}

Similarly, by \eqref{SPQS1}, \eqref{e.050409} and Lemma \ref{l.050401}, we have 
\begin{align*}
	&Q =   -  x^3 - 3ax^2 - (x+a)F_{\theta_0, \psi} +  \sqrt{2}\varepsilon^2 U_{\theta_0, \psi}^2 (x+a)(y+b) G_{\theta_0, \psi} \\
	& = -  x^3 - 3ax^2\\
	& \quad - (x+a)\Bigg(\frac32\rho(1-\rho)\left(1-3\cos^2(\Theta + \theta_0 + \psi)\right)\ee^4a^4(\varepsilon^3\sqrt{\varepsilon}  {\mathbb X}+1)^4\\
	&\quad +\ee^7a^4(\varepsilon^3\sqrt{\varepsilon}  {\mathbb X}+1)^4U_3O(1) +\ee^6a^6(\varepsilon^3\sqrt{\varepsilon}  {\mathbb X}+1)^6O(1)\Bigg) \\
	& -\frac{3\sqrt{2}}{2}   \rho (1-\rho)\varepsilon^4 a^4  (\varepsilon^3\sqrt{\varepsilon}  {\mathbb X}+1)^3(\varepsilon^3\sqrt{\varepsilon}  {\mathbb Y}+ ba^{-1}) \sin 2(\Theta  + \psi+ \theta_0)  + \varepsilon^6 a^6 {\mathbb O}(1) \\
	&=  -\frac{3\sqrt{2}}{2}   \rho (1-\rho)\varepsilon^4 a^4  (\varepsilon^3\sqrt{\varepsilon}  {\mathbb X}+1)^3(\varepsilon^3\sqrt{\varepsilon}  {\mathbb Y}+ ba^{-1}) \sin 2(\Theta  + \psi+ \theta_0)  + \varepsilon^4 a^5 {\mathbb O}(1)  \\
	&	-  a^3 \varepsilon^{10} \sqrt{\varepsilon} {\mathbb X}^3 - 3a^3\varepsilon^{7}{\mathbb X}^2. 
\end{align*}
\end{proof}

We are now ready to prove Proposition \ref{props2.2}. 

\begin{proof}[Proof of Proposition \ref{props2.2}]
	This follows directly from Lemmas \ref{l.050402} and \ref{l.050403}, where the expansion \eqref{e.060406} with $ k = -3 $ is used to handle the factor $ U_{\theta_0,\psi} $ in the denominator of the integral equation for $ \Theta $.

\end{proof}

\section{Lemmas for evaluating the Poincar\'e-Melnikov integral}\label{a.051001}
In this section we collect auxiliary lemmas that are used to estimate the Poincar\'e-Melnikov integral. Recall that $(a,b,\psi)$ is the homoclinic orbit given as in \eqref{e.050303}. 
Let 
\begin{align*}
	\mb I  &= 	\int_{-\infty}^{+\infty}  (2a^3-  3a^5/2 )\sin (\theta_0 + \psi)\left(1 - R^{-3}_1\right) d \tau, \\
	\mb J &= \int_{-\infty}^{+\infty}  a b [ \varepsilon^2a^2 \cos (\theta_0 + \psi) (2+R^{-3}_1)+   (1   -   R^{-3}_1)]   d\tau,
\end{align*}
be as in \eqref{e.0219}, where 
\[	R_1  = \sqrt{1-2 \varepsilon^2  a^2 \cos (\psi + \theta_0)  + \varepsilon^4  a^4}.\]
We will use the convention that $\binom{\alpha}{n}=0$ for $n<0$. 

\begin{lemma}\label{l.050901}
	We have 
	\begin{eqnarray*}
		R_1^{-3} 
		& = & \frac{1}{(1+\varepsilon^4 a^4)^{3/2}}+ \sum_{k=1}^{\infty} \frac{2 \binom{-3/2}{2k}   \varepsilon^{4k}  a^{4k}}{(1+\varepsilon^4 a^4)^{2k+3/2}}  \binom{2k}{k}  \\
		& & + \sum_{m=1}^{\infty}\sum_{k=m}^{\infty} \frac{2 \binom{-3/2}{2k}   \varepsilon^{4k}  a^{4k}}{(1+\varepsilon^4 a^4)^{2k+3/2}}  \binom{2k}{k-m} \cos 2m (\theta_0+\psi)\\
		& & - \sum_{m=0}^{\infty}\sum_{k=m}^{\infty} \frac{2 \binom{-3/2}{2k+1}   \varepsilon^{4k+2}  a^{4k+2}}{(1+\varepsilon^4 a^4)^{2k+5/2}}  \binom{2k+1}{k-m} \cos (2m+1) (\theta_0+\psi).
	\end{eqnarray*}
\end{lemma}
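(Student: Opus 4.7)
\medskip

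The plan is a two-step expansion: first a binomial expansion in a small real parameter, then a Fourier expansion in $\phi := \psi + \theta_0$. Set $\xi := \dfrac{2\varepsilon^2 a^2}{1+\varepsilon^4 a^4}$. By AM-GM, $\xi \in [0,1)$ on the whole $\tau$-line for $\varepsilon \in (0,\varepsilon_0)$, since $1 + \varepsilon^4 a^4 \geq 2\varepsilon^2 a^2$ with equality only at $\varepsilon^2 a^2 = 1$ (excluded for $\varepsilon_0$ small because $a$ is bounded). Factoring $1+\varepsilon^4 a^4$ out of the radicand gives
\[
R_1^{-3} = (1+\varepsilon^4 a^4)^{-3/2} \bigl(1 - \xi \cos\phi\bigr)^{-3/2}.
\]

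Next I would apply the generalized binomial theorem to the second factor, which converges absolutely since $|\xi \cos\phi| < 1$:
\[
(1-\xi\cos\phi)^{-3/2} = \sum_{n=0}^{\infty} \binom{-3/2}{n}(-\xi)^n \cos^n\phi,
\]
and split the sum into even and odd $n$. For the Fourier expansion of $\cos^n\phi$, I would use the standard identities
\begin{align*}
\cos^{2k}\phi &= \frac{1}{4^k}\binom{2k}{k} + \frac{2}{4^k}\sum_{m=1}^{k}\binom{2k}{k-m}\cos(2m\phi),\\
\cos^{2k+1}\phi &= \frac{1}{4^k}\sum_{m=0}^{k}\binom{2k+1}{k-m}\cos\bigl((2m+1)\phi\bigr),
\end{align*}
obtained from expanding $\bigl(\tfrac{e^{i\phi}+e^{-i\phi}}{2}\bigr)^n$.

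Substituting $\xi^n = (2\varepsilon^2 a^2)^n/(1+\varepsilon^4 a^4)^n$, one finds that for $n = 2k$ the sign $(-1)^{2k} = 1$ combines with $2^{2k}/4^k = 1$ to give a factor $\varepsilon^{4k}a^{4k}/(1+\varepsilon^4 a^4)^{2k}$, while for $n = 2k+1$ the sign $(-1)^{2k+1}=-1$ combines with $2^{2k+1}/4^k = 2$ to give $-2\varepsilon^{4k+2}a^{4k+2}/(1+\varepsilon^4 a^4)^{2k+1}$. Multiplying through by the prefactor $(1+\varepsilon^4 a^4)^{-3/2}$ then supplies the exponents $2k+3/2$ and $2k+5/2$ in the denominators. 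Finally, interchanging the order of summation in the double sum, $\sum_{k\geq 0}\sum_{m=0}^{k} = \sum_{m\geq 0}\sum_{k\geq m}$, and isolating the $k=0$ contribution in the even case (which is just $(1+\varepsilon^4 a^4)^{-3/2}$) produces the three claimed sums indexed by $m=0$, $m \geq 1$ even, and $m \geq 0$ odd.

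There is no real obstacle: once the decomposition $R_1^{-3} = (1+\varepsilon^4 a^4)^{-3/2}(1-\xi\cos\phi)^{-3/2}$ is in hand, the computation is bookkeeping. The only thing to check is the absolute convergence of the resulting double series, which follows at once from $|\xi| \leq \xi_0 < 1$ uniformly in $\tau$ together with the standard ratio bound $|\binom{-3/2}{n}| = O(n^{1/2})$; this justifies both the term-by-term Fourier expansion and the interchange of summations.
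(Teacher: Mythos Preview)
Your proposal is correct and follows essentially the same route as the paper: factor $R_1^{-3}=(1+\varepsilon^4 a^4)^{-3/2}(1-\xi\cos\phi)^{-3/2}$, apply the generalized binomial theorem, expand $\cos^n\phi$ via $(e^{i\phi}+e^{-i\phi})^n$, split into even and odd $n$, and swap the order of the $k$- and $m$-sums. Your explicit AM-GM justification for $|\xi|<1$ is slightly more careful than the paper, which simply invokes ``$\varepsilon_0$ sufficiently small,'' but otherwise the arguments coincide.
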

\begin{proof}
	Note that 
	\begin{eqnarray*}
		R_1& = & (1+\varepsilon^4 a^4)^{1/2}\left(1 - \frac{2 \varepsilon^2 a^2 \cos (\theta_0+\psi)}{(1+\varepsilon^4 a^4)}\right)^{1/2}.
	\end{eqnarray*}
	By the binomial theorem, one has 
	\begin{align}\label{e.051001}
		R_1^{-3} 
		=  \frac{1}{(1+\varepsilon^4 a^4)^{3/2}}+ \sum_{n = 1}^{\infty} \frac{(-1)^n\binom{-3/2}{n}  \varepsilon^{2n}  a^{2n}}{(1+\varepsilon^4 a^4)^{n+3/2}} \left(e^{\mi (\theta_0 + \psi)} + e^{-\mi (\theta_0 + \psi)}\right)^n.
	\end{align}
	We split the sum according to whether $n$ is even or odd to obtain 
	\begin{eqnarray*}
		R_1^{-3} 
		& = & \frac{1}{(1+\varepsilon^4 a^4)^{3/2}}  + \sum_{k=1}^{\infty} \frac{2 \binom{-3/2}{2k}   \varepsilon^{4k}  a^{4k}}{(1+\varepsilon^4 a^4)^{2k+3/2}} \sum_{m=0}^{k} \binom{2k}{k-m} \cos 2m (\theta_0+\psi) \\
		& & 	- \sum_{k=0}^{\infty} \frac{2 \binom{-3/2}{2k+1}   \varepsilon^{4k+2}  a^{4k+2}}{(1+\varepsilon^4 a^4)^{2k+5/2}} \sum_{m=0}^{k} \binom{2k+1}{k-m} \cos (2m+1) (\theta_0+\psi).
	\end{eqnarray*}
	Switching the order of the sums in $m$ and $k$, we then obtain the desired expansion. 

\end{proof}

Denote for notational convenience (see also \eqref{e.051003})
\begin{align*}
	\mb I^{(1)}_{n,m,\odd} = \ee^{4n} \int_{-\infty}^{+\infty} a^{4n+3} \cos (2m+1)\psi d \tau, \text{ and } \mb I^{(2)}_{n,m,\odd} = \ee^{4n} \int_{-\infty}^{+\infty} a^{4n+5} \cos (2m+1)\psi d \tau,
\end{align*}
and 
\[\mb I^{(1)}_{n,m,\even} = \varepsilon^{4n-2} \int_{-\infty}^{+\infty} a^{4n+1} \cos 2m \psi  d \tau, \text{ and } \mb I^{(2)}_{n,m,\even} = \varepsilon^{4n-2} \int_{-\infty}^{+\infty} a^{4n+3} \cos 2m \psi d \tau.\]
\begin{lemma}\label{l.051001}
	Let $d_1 = 2, d_2 = 3/2$.  We have the following Fourier expansion of $\mb I$. 
	\begin{align*}
		\mb I =  \sum_{m = 0}^{\infty}\mb I_m^{\odd} \sin (2m+1)\theta_0 + \sum_{m = 1}^{\infty}\mb I_m^{\even} \sin (2m)\theta_0,
	\end{align*}
	where 
	\begin{align*}
		\mb I_0^{\odd} = \sum_{k=1}^{\infty}\sum_{\ell=0}^{\infty}\left(-C_{0,k,\ell}^{(1)} \mb I^{(1)}_{k+\ell,0,\odd}+C_{0,k,\ell}^{(2)} \mb I^{(2)}_{k+\ell,0,\odd}\right), \quad C_{0,k,\ell}^{(j)} = d_j C_{k,\ell},
	\end{align*}
	with 
	\begin{align*}
		C_{k,0}&=\binom{-3/2}{k}+\binom{-3/2}{2k}\left(2\binom{2k}{k}- \binom{2k}{k-1}\right),\\
		C_{k,\ell}& = \binom{-3/2}{2k}\left(2\binom{2k}{k}- \binom{2k}{k-1}\right)\binom{-2k-3/2}{\ell}, \, \ell\geq 1,
	\end{align*}
	and for $m\geq 1$, 
	\begin{align*}
		\mb I_m^{\odd} = \sum_{k=m}^{\infty}\sum_{\ell=0}^{\infty}\left(-C_{m,k,\ell}^{(1)} \mb I^{(1)}_{k+\ell,m,\odd}+C_{m,k,\ell}^{(2)} \mb I^{(2)}_{k+\ell,m,\odd}\right), \quad C_{m,k,\ell}^{(j)} = d_j  C_{m,k,\ell},
	\end{align*}
	with 
	\begin{align*}
		C_{m,k,\ell}= \binom{-3/2}{2k}\left( \binom{2k}{k-m}  - \binom{2k}{k-m-1} \right)\binom{-2k-3/2}{\ell},
	\end{align*}
	and 
	\begin{align*}
		\mb I_m^{\even} = \sum_{k=m}^{\infty}\sum_{\ell=0}^{\infty}\left(E_{m,k,\ell}^{(1)} \mb I^{(1)}_{k+\ell,m,\even}-E_{m,k,\ell}^{(2)} \mb I^{(2)}_{k+\ell,m,\even}\right), \quad E_{m,k,\ell}^{(j)} = d_j E_{m,k,\ell},
	\end{align*}
	with 
	\begin{align*}
		E_{m,k,\ell}= \binom{-3/2}{2k-1} \left( \binom{2k-1}{k-m}  - \binom{2k-1}{k-m-1}  \right)\binom{-2k-1/2}{\ell}. 
	\end{align*}
\end{lemma}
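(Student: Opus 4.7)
The plan is to substitute the expansion of $R_1^{-3}$ from Lemma \ref{l.050901} into the definition of $\mb I$, multiply by $\sin(\theta_0+\psi)$, and reduce the result to a Fourier series in $\theta_0$ by combining elementary trigonometric identities with a parity argument in $\tau$. I would first expand each factor $(1+\ee^4 a^4)^{-3/2}$, $(1+\ee^4 a^4)^{-(2k+3/2)}$, and $(1+\ee^4 a^4)^{-(2k+5/2)}$ via the generalized binomial theorem, introducing an inner summation index $\ell$ and the coefficients $\binom{-3/2}{\ell}$, $\binom{-2k-3/2}{\ell}$, and (via the natural index shift $k\mapsto k+1$ applied to the odd-$m$ piece of Lemma \ref{l.050901}) $\binom{-2k-1/2}{\ell}$, recasting $1-R_1^{-3}$ as a triple sum of monomials $\ee^{4n}a^{4n}\cos(m(\theta_0+\psi))$ with $n=k+\ell$.

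The next step is to apply the product-to-sum identity
\[
\sin(\theta_0+\psi)\cos(n(\theta_0+\psi)) = \tfrac12\bigl[\sin((n+1)(\theta_0+\psi)) - \sin((n-1)(\theta_0+\psi))\bigr],
\]
so that a term $\cos 2m'(\theta_0+\psi)$ feeds into $\sin((2m'\pm 1)(\theta_0+\psi))$. The coefficient of $\sin((2m+1)(\theta_0+\psi))$ with $m\ge 1$ then collects $-\tfrac12$ from $m'=m$ and $+\tfrac12$ from $m'=m+1$, yielding the signed combination $\binom{2k}{k-m}-\binom{2k}{k-m-1}$ once we use the convention $\binom{2k}{k-m-1}=0$ for $k=m$. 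The analogous merger of the $\cos(2m'+1)(\theta_0+\psi)$ contributions (from $m'=m-1$ and $m'=m$, after the index shift that aligns their powers of $\ee^4 a^4$) produces $\binom{2k-1}{k-m}-\binom{2k-1}{k-m-1}$ inside $E_{m,k,\ell}$. For $m=0$, the non-trigonometric residue $1-(1+\ee^4 a^4)^{-3/2}=-\sum_{k\ge 1}\binom{-3/2}{k}\ee^{4k}a^{4k}$ contributes directly to the $\sin\theta_0$ coefficient, supplying the extra $\binom{-3/2}{k}$ summand in $C_{k,0}$.

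Finally I would split $\sin(n(\theta_0+\psi))=\sin(n\theta_0)\cos(n\psi)+\cos(n\theta_0)\sin(n\psi)$ and invoke the parity observation that $a(\tau)$ is even in $\tau$ while $\psi(\tau)$ is odd, so every integrand of the form $a^p\sin(n\psi)$ is an odd function of $\tau$ and integrates to zero over $\mb R$. Only the $\sin(n\theta_0)\cos(n\psi)$ part survives, which automatically turns $\mb I$ into a pure sine series in $\theta_0$. Multiplying by the outer factor $(2a^3-\tfrac32 a^5)$ and integrating then expresses each Fourier coefficient as $d_1=2$ times an $\mb I^{(1)}$-type integral minus $d_2=3/2$ times an $\mb I^{(2)}$-type integral, which after regrouping with $n=k+\ell$ matches the stated formulas exactly.

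The hard part will be the bookkeeping at the two boundaries: absorbing the non-trigonometric $\binom{-3/2}{k}$ contribution cleanly into the $\ell=0$ slot of the formula for $\mb I_0^{\odd}$, and carrying out the index shift that aligns the $m'=m-1$ and $m'=m$ contributions to $\mb I_m^{\even}$ so that they collapse into a single sum starting at $k=m$ with coefficient $E_{m,k,\ell}$ in the advertised form.
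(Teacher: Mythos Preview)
Your proposal is correct and follows essentially the same route as the paper's proof: both start from the expansion of $R_1^{-3}$ in Lemma~\ref{l.050901}, apply the product-to-sum identity to $\sin(\theta_0+\psi)\cos(n(\theta_0+\psi))$, use the parity of $a$ and $\psi$ to kill the $\cos(n\theta_0)\sin(n\psi)$ pieces, and expand the remaining $(1+\ee^4a^4)^{-\alpha}$ factors binomially. The only organizational difference is that the paper performs the product-to-sum step and the $k$-reindexing \emph{before} expanding $(1+\ee^4a^4)^{-(2k+3/2)}$ and $(1+\ee^4a^4)^{-(2k+1/2)}$, which lets the two contributions (from $m'=m$ and $m'=m\pm 1$) be combined while they still carry the \emph{same} denominator power, so the $\ell$-expansion need only be done once; in your ordering you would have two separate $\ell$-series to merge, which is harmless but slightly messier. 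The boundary bookkeeping you flag as the hard part (absorbing $1-(1+\ee^4a^4)^{-3/2}$ into the $\ell=0$ slot for $m=0$, and the $k\mapsto k\pm 1$ shift for $\mb I_m^{\even}$) is exactly what the paper does explicitly.
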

\begin{proof}
	By Lemma \eqref{l.050901} we have 
	\begin{eqnarray*}
			&&\sin (\theta_0 + \psi)\left(1 - R^{-3}_1\right) \\
			& = & \left(1-\frac{1}{(1+\varepsilon^4 a^4)^{3/2}}-\sum_{k=1}^{\infty} \frac{ 2 \binom{-3/2}{2k}   \varepsilon^{4k}  a^{4k}}{(1+\varepsilon^4 a^4)^{2k+3/2}}   \binom{2k}{k} \right) \sin (\theta_0 + \psi) \\
			& & - \sum_{m=1}^{\infty}\sum_{k=m}^{\infty} \frac{ \binom{-3/2}{2k}   \varepsilon^{4k}  a^{4k}}{(1+\varepsilon^4 a^4)^{2k+3/2}}  \binom{2k}{k-m}  \left[\sin (2m+1) (\theta_0+\psi)- \sin (2m-1) (\theta_0 + \psi)\right]\\
			& & + \sum_{m=0}^{\infty}\sum_{k=m}^{\infty} \frac{ \binom{-3/2}{2k+1}   \varepsilon^{4k+2}  a^{4k+2}}{(1+\varepsilon^4 a^4)^{2k+5/2}}  \binom{2k+1}{k-m} \left[\sin (2m+2) (\theta_0+\psi)- \sin 2m (\theta_0 + \psi)\right]\\
			& &:= A_0+A_1+A_2. 
		\end{eqnarray*}
	Denote 	
	\[F(k,m)=\frac{ \binom{-3/2}{2k}   \varepsilon^{4k}  a^{4k}}{(1+\varepsilon^4 a^4)^{2k+3/2}}  \binom{2k}{k-m}.\]
	Then 
	\begin{align*}
		&A_1= - \sum_{m=1}^{\infty}\sum_{k=m}^{\infty} F(k,m) \left[\sin (2m+1) (\theta_0+\psi)\right] +  \sum_{m=2}^{\infty}\sum_{k=m}^{\infty} F(k,m) \left[\sin (2m-1) (\theta_0 + \psi)\right]\\
		& \qquad + \sum_{k=1}^{\infty} F(k,1) \left[ \sin (\theta_0 + \psi)\right]\\
		&=  -\sum_{m=1}^{\infty}\sum_{k=m}^{\infty}\left( F(k,m) -F(k+1,m+1)\right)\left[\sin (2m+1) (\theta_0+\psi)\right]\\
		& \qquad + \sum_{k=1}^{\infty} F(k,1) \left[ \sin (\theta_0 + \psi)\right],
	\end{align*}
	where we used the fact 
	\begin{align*}
		\sum_{m=2}^{\infty}\sum_{k=m}^{\infty} F(k,m) \left[\sin (2m-1) (\theta_0 + \psi)\right] = \sum_{m=1}^{\infty}\sum_{k=m}^{\infty} F(k+1, m+1) \left[\sin (2m+1) (\theta_0 + \psi)\right].
	\end{align*}
	Hence,
	\begin{align*}
		A_0+A_1 &=  \left(1-\frac{1}{(1+\varepsilon^4 a^4)^{3/2}}-\sum_{k=1}^{\infty} \left(\frac{ 2 \binom{-3/2}{2k}   \varepsilon^{4k}  a^{4k}}{(1+\varepsilon^4 a^4)^{2k+3/2}}   \binom{2k}{k}-F(k,1)\right) \right) \sin (\theta_0 + \psi) \\
		& -\sum_{m=1}^{\infty}\sum_{k=m}^{\infty}\left( F(k,m) -F(k+1,m+1)\right)\left[\sin (2m+1) (\theta_0+\psi)\right]\\
		&:= \sum_{m=0}^{\infty}\mathcal{A}_1(m)\sin (2m+1) (\theta_0+\psi).
	\end{align*} 
	Similarly, by letting 
	\begin{align*}
		G(k,m) = \frac{ \binom{-3/2}{2k+1}   \varepsilon^{4k+2}  a^{4k+2}}{(1+\varepsilon^4 a^4)^{2k+5/2}}  \binom{2k+1}{k-m},
	\end{align*}
	we have 
	\begin{align*}
		A_2 &= \sum_{m=0}^{\infty}\sum_{k=m}^{\infty} G(k,m)\left[\sin (2m+2) (\theta_0+\psi)\right]-\sum_{m=1}^{\infty}\sum_{k=m}^{\infty} G(k,m)\left[\sin 2m (\theta_0 + \psi)\right]\\
		& = \sum_{m=1}^{\infty}\sum_{k=m}^{\infty} \left(G(k-1,m-1)-G(k,m)\right)\left[\sin (2m) (\theta_0+\psi)\right]\\
		&:= \sum_{m=1}^{\infty}\mathcal{A}_2(m)\sin (2m) (\theta_0+\psi).
	\end{align*}
	Since $\psi$ is odd and $a$ is even, 
	\begin{align*}
		\mb I  &= 	\int_{-\infty}^{+\infty}  (2a^3-  3a^5/2 )\sin (\theta_0 + \psi)\left(1 - R^{-3}_1\right) d \tau\\
		& = \int_{-\infty}^{+\infty}  (2a^3-  3a^5/2 )\left(A_0+A_1+A_2\right) d \tau, \\
		& = \sin\theta_0 \int_{-\infty}^{+\infty}\widetilde{\mathcal{A}}_1(0)\cos\psi d\tau +\sum_{m=1}^{\infty} \sin (2m+1) \theta_0\int_{-\infty}^{+\infty}\widetilde{\mathcal{A}}_1(m)\cos (2m+1) \psi d\tau\\
		&\quad +\sum_{m=1}^{\infty} \sin 2m \theta_0\int_{-\infty}^{+\infty}\widetilde{\mathcal{A}}_2(m)\cos (2m) \psi d\tau, 
	\end{align*}
	where 
	\begin{align*}
		\widetilde{\mathcal{A}}_1(0) &= (2a^3-  3a^5/2 )\mathcal{A}_1(0)\\
		& = -(2a^3 -  3 a^5/2 )\sum_{k=1}^{\infty}\left(\binom{-3/2}{k}+\frac{\binom{-3/2}{2k}\left(2\binom{2k}{k}- \binom{2k}{k-1}\right)}{(1+\varepsilon^4 a^4)^{2 k+3/2}}\right)\ee^{4k}a^{4k},\\
		&= -(2a^3 -  3 a^5/2 )\sum_{k=1}^{\infty}\sum_{\ell=0}^{\infty}C_{k,\ell}\ee^{4(k+\ell)}a^{4(k+\ell)}
	\end{align*}
	with 
	\begin{align*}
		C_{k,0}&=\binom{-3/2}{k}+\binom{-3/2}{2k}\left(2\binom{2k}{k}- \binom{2k}{k-1}\right)\\
		C_{k,\ell}& = \binom{-3/2}{2k}\left(2\binom{2k}{k}- \binom{2k}{k-1}\right)\binom{-2k-3/2}{\ell}, \, \ell\geq 1. 
	\end{align*}
	Hence, 
	\begin{align*}
		\mb I_0^{\odd} = \sum_{k=1}^{\infty}\sum_{\ell=0}^{\infty}\left(-C_{0,k,\ell}^{(1)} \mb I^{(1)}_{k+\ell,0,\odd}+C_{0,k,\ell}^{(2)} \mb I^{(2)}_{k+\ell,0,\odd}\right),
	\end{align*}
	with 
	\begin{align*}
		C_{0,k,\ell}^{(1)} = 2 C_{k,\ell}, \quad C_{0,k,\ell}^{(2)} = \frac32 C_{k,\ell}. 
	\end{align*}
	Using the convention that $\binom{\alpha}{n}=0$ for $n<0$, we have 
	\begin{align*}
		&\widetilde{\mathcal{A}}_1(m) = (2a^3-  3a^5/2 )\mathcal{A}_1(m)\\
		& = - (2a^3 -  3a^5/2 )\sum_{k=m}^{\infty}\left(\frac{ \binom{-3/2}{2k}\binom{2k}{k-m}  \varepsilon^{4k}  a^{4k}}{(1+\varepsilon^4 a^4)^{2k+3/2}}-\frac{ \binom{-3/2}{2(k+1)} \binom{2(k+1)}{k-m}  \varepsilon^{4(k+1)}  a^{4(k+1)}}{(1+\varepsilon^4 a^4)^{2k+7/2}}\right)\\
		& = - (2a^3 -  3a^5/2 )\frac{ \binom{-3/2}{2m} \varepsilon^{4m}  a^{4m}}{(1+\varepsilon^4 a^4)^{2m+3/2}}\\
		&- (2a^3 -  3a^5/2 )\left(\sum_{k=m+1}^{\infty}\frac{ \binom{-3/2}{2k}\binom{2k}{k-m}  \varepsilon^{4k}  a^{4k}}{(1+\varepsilon^4 a^4)^{2k+3/2}}-\sum_{k=m+1}^{\infty}\frac{ \binom{-3/2}{2k} \binom{2k}{k-m-1}  \varepsilon^{4k}  a^{4k}}{(1+\varepsilon^4 a^4)^{2k+3/2}}\right)\\
		& = - (2a^3 -  3a^5/2 )\sum_{k=m}^{\infty}\sum_{\ell=0}^{\infty}\binom{-3/2}{2k}\left( \binom{2k}{k-m}  - \binom{2k}{k-m-1} \right)(\varepsilon a)^{4(k+\ell)}\binom{-2k-3/2}{\ell}
	\end{align*}
	which implies that  for $m\geq 1$, 
	\begin{align*}
		\mb I_m^{\odd} = \sum_{k=m}^{\infty}\sum_{\ell=0}^{\infty}\left(-C_{m,k,\ell}^{(1)} \mb I^{(1)}_{k+\ell,m,\odd}+C_{m,k,\ell}^{(2)} \mb I^{(2)}_{k+\ell,m,\odd}\right)
	\end{align*}
	with 
	\begin{align*}
		C_{m,k,\ell}^{(1)} &= 2\binom{-3/2}{2k}\left( \binom{2k}{k-m}  - \binom{2k}{k-m-1} \right)\binom{-2k-3/2}{\ell},\\
		C_{m,k,\ell}^{(2)} &= \frac32\binom{-3/2}{2k}\left( \binom{2k}{k-m}  - \binom{2k}{k-m-1} \right)\binom{-2k-3/2}{\ell}.
	\end{align*}
	Similarly, 
	\begin{align*}
		&\widetilde{\mathcal{A}}_2(m) = (2a^3-  3a^5/2 )\mathcal{A}_2(m)\\
		& = (2a^3 -  3a^5/2 ) \frac{ \binom{-3/2}{2m-1}\varepsilon^{4m-2}  a^{4m-2}}{(1+\varepsilon^4 a^4)^{2m+1/2}} \\
		& \quad +(2a^3 -  3a^5/2 ) \left( \sum_{k=m+1}^{\infty}\frac{ \binom{-3/2}{2k-1}\binom{2k-1}{k-m}  \varepsilon^{4k-2}  a^{4k-2}}{(1+\varepsilon^4 a^4)^{2k+1/2}}  -\sum_{k=m+1}^{\infty} \frac{ \binom{-3/2}{2k-1}\binom{2k-1}{k-m-1}  \varepsilon^{4k-2}  a^{4k-2}}{(1+\varepsilon^4 a^4)^{2k+1/2}} \right)\\
		& =\left(2a^3 -  \frac{3a^5}{2} \right)\sum_{k=m}^{\infty}\sum_{\ell=0}^{\infty}\binom{-3/2}{2k-1} \left( \binom{2k-1}{k-m}  - \binom{2k-1}{k-m-1}  \right)(\varepsilon a)^{4(k+\ell)-2}  \binom{-2k-1/2}{\ell}.
	\end{align*}
	Therefore, for $m\geq 1$, 
	\begin{align*}
		\mb I_m^{\even} = \sum_{k=m}^{\infty}\sum_{\ell=0}^{\infty}\left(E_{m,k,\ell}^{(1)} \mb I^{(1)}_{k+\ell,m,\even}-E_{m,k,\ell}^{(2)} \mb I^{(2)}_{k+\ell,m,\even}\right)
	\end{align*}
	with 
	\begin{align*}
		E_{m,k,\ell}^{(1)} &= 2\binom{-3/2}{2k-1} \left( \binom{2k-1}{k-m}  - \binom{2k-1}{k-m-1}  \right)\binom{-2k-1/2}{\ell},\\
		E_{m,k,\ell}^{(2)} &= \frac32 \binom{-3/2}{2k-1} \left( \binom{2k-1}{k-m}  - \binom{2k-1}{k-m-1}  \right)\binom{-2k-1/2}{\ell}. 
	\end{align*}
\end{proof}
To expand $\mb J$, we denote 
\begin{align*}
	\mb J_{n,m,\odd} = \varepsilon^{4n+2} \int_{-\infty}^{+\infty} a^{4n+3} b \sin (2m+1) \psi d \tau, \text{ and }   \mb J_{n,m,\even}=\varepsilon^{4n} \int_{-\infty}^{+\infty} a^{4n+1} b \sin 2m\psi d \tau. 
\end{align*}
\begin{lemma}\label{l.051002}
	We have 
	\begin{align*}
		\mb J =  \sum_{m = 0}^{\infty}\mb J_m^{\odd} \sin (2m+1)\theta_0 + \sum_{m = 1}^{\infty}\mb J_m^{\even} \sin (2m)\theta_0,
	\end{align*}
	where 
	\begin{align*}
		\mb J_0^{\odd} = \sum_{k=1}^{\infty}\sum_{\ell=0}^{\infty}  \mc C_{0, k,\ell} \mb J_{k+\ell, 0, \odd},
	\end{align*}
	with
	\begin{align*}
		\mc C_{0,k,0} &= -\left(-3\binom{-5/2}{k}  +\binom{-3/2}{k} \right) -  \left[ \frac12\binom{-3/2}{2k}    -  \binom{-3/2}{2k+1} \right]\binom{2k+1}{k},\\
		\mc C_{0, k,\ell} &= -\left[ \frac12\binom{-3/2}{2k} \binom{-2k-3/2}{\ell}   -  \binom{-3/2}{2k+1} \binom{-2k-5/2}{\ell} \right]\binom{2k+1}{k}, \, \ell\geq 1, 
	\end{align*}
	and 
	\begin{align*}
		\mb J_m^{\odd} = \sum_{k=m}^{\infty}\sum_{\ell=0}^{\infty}  \mc C_{m, k,\ell} \mb J_{k+\ell, m, \odd},\, m\geq 1, 
	\end{align*}
	with 
	\[\mc C_{m,k,\ell}  = - \left[\frac12\binom{-3/2}{2k} \binom{-2k-3/2}{\ell}-  \binom{-3/2}{2k+1} \binom{-2k-5/2}{\ell}  \right]\binom{2k+1}{k-m}, \]
	and 
	\begin{align*}
		\mb J_m^{\even} = \sum_{k=m}^{\infty}\sum_{\ell=0}^{\infty}  \mc E_{m, k,\ell} \mb J_{k+\ell, m, \even}, \, m\geq 1, 
	\end{align*}
	with 
	\[\mc E_{m,k,\ell} = \left[\frac12 \binom{-3/2}{2k-1}\binom{-2k-1/2}{\ell} -\binom{-3/2}{2k} \binom{-2k-3/2}{\ell}  \right]\binom{2k}{k-m}.\]
\end{lemma}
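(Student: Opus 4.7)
The plan is to mirror the strategy of Lemma \ref{l.051001}, adapted to the mixed structure of the integrand of $\mb J$. The integrand factors as $ab$ (odd in $\tau$) times the bracket
\[\mc B:=\ee^2 a^2\cos(\theta_0+\psi)(2+R_1^{-3})+(1-R_1^{-3}),\]
which depends only on $a$ and $\alpha:=\theta_0+\psi$. I first produce a Fourier expansion of $\mc B$ in $\alpha$; parity in $\tau$ then kills the $\cos k\theta_0$ parts and extracts the sine series for $\mb J$.

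For the Fourier expansion, I start from Lemma \ref{l.050901}, which gives $R_1^{-3}$ as its $\cos 0\alpha$ mode plus $\cos 2m\alpha$ modes ($m\geq 1$) plus $\cos(2m+1)\alpha$ modes ($m\geq 0$). The piece $(1-R_1^{-3})$ is then read off directly, and the product $\ee^2 a^2\cos\alpha\cdot R_1^{-3}$ is handled via
\[
\cos\alpha\cos k\alpha=\tfrac12\bigl(\cos(k+1)\alpha+\cos(k-1)\alpha\bigr),
\]
which swaps even-index modes to odd-index modes and vice versa. Collecting the $\cos(2m+1)\alpha$ modes (contributed by the constant and $\cos 2m\alpha$ modes of $R_1^{-3}$ shifted by $\cos\alpha$, together with the $m$-th odd mode of $R_1^{-3}$ directly) and the $\cos 2m\alpha$ modes (contributed analogously) yields two families of expansions. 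The Pascal identity $\binom{2k}{k-m}+\binom{2k}{k-m-1}=\binom{2k+1}{k-m}$, together with the telescoping pairings used in the proof of Lemma \ref{l.051001}, compactifies the sums and produces the $\binom{2k+1}{k-m}$ and $\binom{2k}{k-m}$ factors appearing in $\mc C_{m,k,\ell}$ and $\mc E_{m,k,\ell}$.

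Once the Fourier expansion $\mc B=\sum_{k\geq 0}c_k(a)\cos k\alpha$ is in hand, parity in $\tau$ (with $a$ even and $b,\psi$ odd) reduces the integration to
\[\mb J=-\sum_{k\geq 1}\sin k\theta_0\int_{-\infty}^{+\infty}ab\,c_k(a)\sin k\psi\,d\tau,\]
so that $\mb J_m^{\odd}=-\int ab\, c_{2m+1}(a)\sin(2m+1)\psi\,d\tau$ and $\mb J_m^{\even}=-\int ab\, c_{2m}(a)\sin 2m\psi\,d\tau$. It then remains to expand each residual factor $(1+\ee^4 a^4)^{-(2k+3/2)}$ or $(1+\ee^4 a^4)^{-(2k+5/2)}$ by the binomial theorem, producing the $\binom{-2k-3/2}{\ell}$ and $\binom{-2k-5/2}{\ell}$ coefficients; the resulting monomials $\ee^{4(k+\ell)+2}a^{4(k+\ell)+3}b\sin(2m+1)\psi$ and $\ee^{4(k+\ell)}a^{4(k+\ell)+1}b\sin 2m\psi$ match exactly the definitions of $\mb J_{k+\ell,m,\odd}$ and $\mb J_{k+\ell,m,\even}$.

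The main obstacle is isolating the special $\ell=0$ piece $-(-3\binom{-5/2}{k}+\binom{-3/2}{k})$ that distinguishes $\mc C_{0,k,0}$ from the generic $\mc C_{0,k,\ell\geq 1}$ pattern. I expect $-3\binom{-5/2}{k}$ to arise from the $k=0$ term of the odd Fourier sum of $R_1^{-3}$, which carries the single factor $(1+\ee^4 a^4)^{-5/2}$ and therefore contributes at every order through its binomial expansion alone, while $-\binom{-3/2}{k}$ should come from $\ee^2 a^2\cos\alpha\cdot(1+\ee^4 a^4)^{-3/2}$ in the $\cos\alpha$ mode via the constant mode of $R_1^{-3}$. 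Tracking these two \emph{single-sum} contributions separately from the \emph{double-sum} pieces governed by the $\ell\geq 0$ expansions of $(1+\ee^4 a^4)^{-(2k+3/2)}$ and $(1+\ee^4 a^4)^{-(2k+5/2)}$ is where the combinatorial bookkeeping demands the most care, and the same caution is required to derive the even-frequency coefficients $\mc E_{m,k,\ell}$.
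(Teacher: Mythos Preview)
Your plan is sound and will produce the stated formulas, but it orders the computation differently from the paper. You propose to start from the Fourier-expanded form of $R_1^{-3}$ (Lemma \ref{l.050901}), multiply by $\cos\alpha$ via product-to-sum, and then use Pascal's identity $\binom{2k}{k-m}+\binom{2k}{k-m-1}=\binom{2k+1}{k-m}$ to fuse the two neighboring Fourier modes. The paper instead goes back to the raw expansion \eqref{e.051001} and observes that multiplying the $n$-th term of $R_1^{-3}$ by $\varepsilon^2a^2\cos\alpha$ turns $2^n\cos^n\alpha$ into $\tfrac12\cdot 2^{n+1}\cos^{n+1}\alpha$; after an index shift $n\mapsto n-1$ the two pieces $\varepsilon^2a^2\cos\alpha\cdot R_1^{-3}$ and $-R_1^{-3}$ align into a single $\cos^n\alpha$ series with coefficient $\tfrac12\binom{-3/2}{n-1}(1+\varepsilon^4a^4)^{-n-1/2}+\binom{-3/2}{n}(1+\varepsilon^4a^4)^{-n-3/2}$, and only then is $(e^{i\alpha}+e^{-i\alpha})^n$ expanded, giving $\binom{2k+1}{k-m}$ and $\binom{2k}{k-m}$ directly without Pascal. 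No ``telescoping pairing'' in the sense of Lemma \ref{l.051001} is actually needed here; that device was specific to the $\sin\alpha\cdot\cos k\alpha$ products in $\mb I$, whereas your $\cos\alpha\cdot\cos k\alpha$ products combine by Pascal alone.

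Your identification of the special contributions to $\mc C_{0,k,0}$ is correct: in the paper's organization the $\cos\alpha$ coefficient of $\mc B$ collects the explicit piece $\bigl(-3(1+\varepsilon^4a^4)^{-5/2}+2+(1+\varepsilon^4a^4)^{-3/2}\bigr)\varepsilon^2a^2$, whose constant part cancels (that is why the $k$-sum starts at $1$) and whose binomial expansion yields exactly $-3\binom{-5/2}{k}+\binom{-3/2}{k}$; the remaining $k\geq 1$ sum supplies the generic $\binom{2k+1}{k}$ piece. Both routes give the same coefficients; the paper's index-shift avoids a layer of bookkeeping, while yours is more modular in that it reuses Lemma \ref{l.050901} verbatim.
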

\begin{proof}
	In view of \eqref{e.051001}, we have 
	\begin{align*}
		&1-R_1^{-3} 
		= 1- \frac{1}{(1+\varepsilon^4 a^4)^{3/2}}- \sum_{n = 1}^{\infty} \frac{(-1)^n\binom{-3/2}{n}  \varepsilon^{2n}  a^{2n}}{(1+\varepsilon^4 a^4)^{n+3/2}} 2^n\cos^n(\theta_0+\psi)\\
		& = 1- \frac{1}{(1+\varepsilon^4 a^4)^{3/2}}+\frac{-3\ee^2a^2}{(1+\ee^4a^4)^{\frac52}}\cos(\theta_0+\psi)+\sum_{n = 2}^{\infty} \frac{(-1)^{n-1}\binom{-3/2}{n}  \varepsilon^{2n}  a^{2n}}{(1+\varepsilon^4 a^4)^{n+3/2}} 2^n\cos^n(\theta_0+\psi)
	\end{align*}
	and 
	\begin{align*}
		&\varepsilon^2a^2 \cos (\theta_0 + \psi) (2+R^{-3}_1)\\
		& = \varepsilon^2a^2 \cos (\theta_0 + \psi) \left(2+\frac{1}{(1+\varepsilon^4 a^4)^{3/2}}\right)+ \sum_{n = 2}^{\infty} \frac{(-1)^{n-1}\binom{-3/2}{n-1}  \varepsilon^{2n}  a^{2n}}{(1+\varepsilon^4 a^4)^{n+1/2}} 2^{n-1}\cos^{n} (\theta_0 + \psi).
	\end{align*}
	Observe that 
	\begin{align*}
		&\sum_{n = 2}^{\infty} \frac{(-1)^{n-1}\binom{-3/2}{n-1}  \varepsilon^{2n}  a^{2n}}{(1+\varepsilon^4 a^4)^{n+1/2}} 2^{n-1}\cos^{n} (\theta_0 + \psi)+\sum_{n = 2}^{\infty} \frac{(-1)^{n-1}\binom{-3/2}{n}  \varepsilon^{2n}  a^{2n}}{(1+\varepsilon^4 a^4)^{n+3/2}} 2^n\cos^n(\theta_0+\psi) \\
		& = \sum_{n=2}^{\infty} \left(\frac{\binom{-3/2}{n-1} }{2(1+\varepsilon^4 a^4)^{n+1/2}} + \frac{\binom{-3/2}{n}  }{(1+\varepsilon^4 a^4)^{n+3/2}}\right)\varepsilon^{2n}  a^{2n}(-1)^{n-1}\left(e^{\mi (\theta_0 + \psi)} + e^{-\mi (\theta_0 + \psi)}\right)^n\\
		& = -\sum_{k=1}^{\infty}\left(\frac{\binom{-3/2}{2k-1} \varepsilon^{4k}  a^{4k}}{2(1+\varepsilon^4 a^4)^{2k+1/2}} + \frac{\binom{-3/2}{2k} \varepsilon^{4k}  a^{4k} }{(1+\varepsilon^4 a^4)^{2k+3/2}}\right)\sum_{m=0}^{k} \binom{2k}{k-m}\cos 2m (\theta_0+\psi)\\
		&+ \sum_{k=1}^{\infty}\left(\frac{\binom{-3/2}{2k} \varepsilon^{4k+2}  a^{4k+2}}{2(1+\varepsilon^4 a^4)^{2k+3/2}} + \frac{\binom{-3/2}{2k+1} \varepsilon^{4k+2}  a^{4k+2}}{(1+\varepsilon^4 a^4)^{2k+5/2}}\right)\sum_{m=0}^{k} \binom{2k+1}{k-m}\cos(2m+1) (\theta_0+\psi).
	\end{align*}
	Consequently, one obtains
	\begin{align*}
		&1-	R_1^{-3}+ \varepsilon^2a^2 \cos (\theta_0 + \psi) (2+R^{-3}_1)\\
		& = 1- \frac{1}{(1+\varepsilon^4 a^4)^{3/2}}+\left(\frac{-3 }{(1+\varepsilon^4 a^4)^{5/2}}  + 2+\frac{1}{(1+\varepsilon^4 a^4)^{3/2}}\right) \varepsilon^2a^2\cos (\theta_0 + \psi)\\
		&  \quad -\sum_{k=1}^{\infty}\left(\frac{\binom{-3/2}{2k-1} \varepsilon^{4k}  a^{4k}}{2(1+\varepsilon^4 a^4)^{2k+1/2}} + \frac{\binom{-3/2}{2k} \varepsilon^{4k}  a^{4k} }{(1+\varepsilon^4 a^4)^{2k+3/2}}\right)\sum_{m=0}^{k} \binom{2k}{k-m}\cos 2m (\theta_0+\psi)\\
		&\quad + \sum_{k=1}^{\infty}\left(\frac{\binom{-3/2}{2k} \varepsilon^{4k+2}  a^{4k+2}}{2(1+\varepsilon^4 a^4)^{2k+3/2}} + \frac{\binom{-3/2}{2k+1} \varepsilon^{4k+2}  a^{4k+2}}{(1+\varepsilon^4 a^4)^{2k+5/2}}\right)\sum_{m=0}^{k} \binom{2k+1}{k-m}\cos(2m+1) (\theta_0+\psi),
	\end{align*}
	which, upon switching the order of summations, can be rewritten as 
	\begin{eqnarray*}
		&&	 1-	R_1^{-3}+ \varepsilon^2a^2 \cos (\theta_0 + \psi) (2+R^{-3}_1) \\
		& = & 1- \frac{1}{(1+\varepsilon^4 a^4)^{3/2}}  - \sum_{k=1}^{\infty}  \left(\frac{ \binom{-3/2}{2k-1} }{2(1+\varepsilon^4 a^4)^{2k+1/2}}    -   \frac{\binom{-3/2}{2k}  }{(1+\varepsilon^4 a^4)^{2k+3/2}}\right) \varepsilon^{4k}  a^{4k} \binom{2k}{k} \\
		& & +\left(\frac{-3 }{(1+\varepsilon^4 a^4)^{5/2}}  + 2+\frac{1}{(1+\varepsilon^4 a^4)^{3/2}}\right) \varepsilon^2a^2\cos (\theta_0 + \psi) 
		\\
		& &  + \sum_{k=1}^{\infty}  \left(\frac{ \binom{-3/2}{2k} }{2(1+\varepsilon^4 a^4)^{2k+3/2}}    -   \frac{\binom{-3/2}{2k+1}  }{(1+\varepsilon^4 a^4)^{2k+5/2}}\right) \varepsilon^{4k+2}  a^{4k+2}  \binom{2k+1}{k} \cos  (\theta_0+\psi)\\
		& &  - \sum_{m=1}^{\infty}\left[\sum_{k=m}^{\infty}  \left(\frac{ \binom{-3/2}{2k-1}\binom{2k}{k-m} }{2(1+\varepsilon^4 a^4)^{2k+1/2}}    -   \frac{ \binom{-3/2}{2k}\binom{2k}{k-m}  }{(1+\varepsilon^4 a^4)^{2k+3/2}}\right) \varepsilon^{4k}  a^{4k}  \right]\cos 2m (\theta_0+\psi) \\
		& &  +\sum_{m=1}^{\infty}\left[ \sum_{k=m}^{\infty}  \left(\frac{ \binom{-3/2}{2k} \binom{2k+1}{k-m}  }{2(1+\varepsilon^4 a^4)^{2k+3/2}}    -   \frac{ \binom{-3/2}{2k+1} \binom{2k+1}{k-m}   }{(1+\varepsilon^4 a^4)^{2k+5/2}}\right) \varepsilon^{4k+2}  a^{4k+2}\right] \cos (2m+1) (\theta_0+\psi)\\
		& := &B_0+ B_1(0) \cos  (\theta_0+\psi)  + \sum_{m=1}^{\infty} B_1(m)\cos (2m+1) (\theta_0+\psi) + \sum_{m=1}^{\infty} B_2(m)\cos 2m (\theta_0+\psi).
	\end{eqnarray*} 
	Since $a$ is even and $b, \psi$ are odd, it follows that 
	\begin{align*}
		\mb J &= \int_{-\infty}^{+\infty}  a b [ \varepsilon^2a^2 \cos (\theta_0 + \psi) (2+R^{-3}_1)+   (1   -   R^{-3}_1)]   d\tau\\
		&  = \sin\theta_0 \int_{-\infty}^{+\infty}\mathcal{B}_1(0)\sin\psi d\tau + \sum_{m=1}^{\infty} \sin (2m+1) \theta_0\int_{-\infty}^{+\infty}\mathcal{B}_1(m)\sin (2m+1) \psi d\tau  \\
		&\quad + 	\sum_{m=1}^{\infty} \sin 2m \theta_0\int_{-\infty}^{+\infty}\mathcal{B}_2(m)\sin 2m \psi d\tau. 
	\end{align*}
	Here 
	\begin{align*}
		&\mathcal{B}_1(0) = -ab B_1(0) \\
		& = -ab\left(\frac{-3 }{(1+\varepsilon^4 a^4)^{5/2}}  + 2+\frac{1}{(1+\varepsilon^4 a^4)^{3/2}}\right) \varepsilon^2a^2\\
		&\quad - ab\sum_{k=1}^{\infty}  \left(\frac{ \binom{-3/2}{2k} }{2(1+\varepsilon^4 a^4)^{2k+3/2}}    -   \frac{\binom{-3/2}{2k+1}  }{(1+\varepsilon^4 a^4)^{2k+5/2}}\right) \varepsilon^{4k+2}  a^{4k+2}  \binom{2k+1}{k}\\
		& =  -\sum_{k=1}^{\infty} \left(-3\binom{-5/2}{k}  +\binom{-3/2}{k}\right) \ee^{4k+2}  a^{4k+3}b\\
		&\quad - ab\sum_{k=1}^{\infty}\sum_{\ell=0}^{\infty}  \left[ \frac12\binom{-3/2}{2k} \binom{-2k-3/2}{\ell}   -  \binom{-3/2}{2k+1} \binom{-2k-5/2}{\ell} \right] (\varepsilon a)^{4(k+\ell)+2}  \binom{2k+1}{k},
	\end{align*}
	implying that 
	\begin{align*}
		\mb J_0^{\odd} = \sum_{k=1}^{\infty}\sum_{\ell=0}^{\infty}  \mc C_{0, k,\ell} \mb J_{k+\ell, 0, \odd},
	\end{align*}
	with 
	\begin{align*}
		\mc C_{0,k,0} &= -\left(-3\binom{-5/2}{k}  +\binom{-3/2}{k} \right) -  \left[ \frac12\binom{-3/2}{2k}    -  \binom{-3/2}{2k+1} \right]\binom{2k+1}{k},\\
		\mc C_{0, k,\ell} &= -\left[ \frac12\binom{-3/2}{2k} \binom{-2k-3/2}{\ell}   -  \binom{-3/2}{2k+1} \binom{-2k-5/2}{\ell} \right]\binom{2k+1}{k}, \, \ell\geq 1.  
	\end{align*}

	Besides,  for $m\geq 1$, 
	\begin{align*}
		&\mathcal{B}_1(m) = -ab B_1(m) \\
		& = -ab\sum_{k=m}^{\infty}  \left(\frac{ \binom{-3/2}{2k} \binom{2k+1}{k-m}  }{2(1+\varepsilon^4 a^4)^{2k+3/2}}    -   \frac{ \binom{-3/2}{2k+1} \binom{2k+1}{k-m}   }{(1+\varepsilon^4 a^4)^{2k+5/2}}\right) \varepsilon^{4k+2}  a^{4k+2}\\
		& = -ab\sum_{k=m}^{\infty} \sum_{\ell=0}^{\infty}(\ee a)^{4(k+\ell)+2}\left[\frac12\binom{-3/2}{2k} \binom{-2k-3/2}{\ell}-  \binom{-3/2}{2k+1}  \binom{-2k-5/2}{\ell}  \right]\binom{2k+1}{k-m}
	\end{align*}
	which shows that 
	\begin{align*}
		\mb J_m^{\odd} = \sum_{k=m}^{\infty}\sum_{\ell=0}^{\infty}  \mc C_{m, k,\ell} \mb J_{k+\ell, m, \odd},
	\end{align*}
	with 
	\[\mc C_{m,k,\ell}  = - \left[\frac12\binom{-3/2}{2k} \binom{-2k-3/2}{\ell}-  \binom{-3/2}{2k+1} \binom{-2k-5/2}{\ell}  \right]\binom{2k+1}{k-m}. \]

	Finally, for $m\geq 1$, 
	\begin{align*}
		&\mathcal{B}_2(m) = -ab B_2(m) \\
		& = ab\sum_{k=m}^{\infty}  \left(\frac{ \binom{-3/2}{2k-1}\binom{2k}{k-m} }{2(1+\varepsilon^4 a^4)^{2k+1/2}}    -   \frac{ \binom{-3/2}{2k}\binom{2k}{k-m}  }{(1+\varepsilon^4 a^4)^{2k+3/2}}\right) \varepsilon^{4k}  a^{4k}\\
		& = ab\sum_{k=m}^{\infty} \sum_{\ell=0}^{\infty}(\ee a)^{4(k+\ell)}\left[\frac12 \binom{-3/2}{2k-1}\binom{-2k-1/2}{\ell} -\binom{-3/2}{2k} \binom{-2k-3/2}{\ell}  \right]\binom{2k}{k-m},
	\end{align*}
	which shows that 
	\begin{align*}
		\mb J_m^{\even} = \sum_{k=m}^{\infty}\sum_{\ell=0}^{\infty}  \mc E_{m, k,\ell} \mb J_{k+\ell, m, \even},
	\end{align*}
	with 
	\[\mc E_{m,k,\ell} = \left[\frac12 \binom{-3/2}{2k-1}\binom{-2k-1/2}{\ell} -\binom{-3/2}{2k} \binom{-2k-3/2}{\ell}  \right]\binom{2k}{k-m}.\]
\end{proof}

The following lemma relates the integrals of the homoclinic orbit to integrals involving 
\begin{equation}\label{ztoy1}
	y(z) = (\sqrt{z^2 + 4^3}+z)^{1/3} -( \sqrt{z^2 + 4^3} -z)^{1/3},
\end{equation}
and follows directly from a sequence of change-of-variable transformations. 
\begin{lemma}\label{l.043001}
    For integers $p,q\geq 1$ and $r\geq 0$, consider 
    \[I_{p,q,r} := \int_{-\infty}^{\infty}a^pb^re^{\mi q\psi}d\tau.\]
    We have 
    \begin{align*}
        I_{p,q,r} = \frac{2}{3}(2 \sqrt{2})^{p+r}(-1)^{r+q}\int_{-\infty}^{\infty}\frac{e^{- \frac{\mi q}{24\varepsilon^3}z}y(z)^r}{(y(z)+2\mi)^{1+q+(p+2r+1)/2}(y(z)-2\mi)^{1-q+(p+2r+1)/2}}dz. 
    \end{align*}
\end{lemma}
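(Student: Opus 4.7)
The plan is to perform two successive changes of variable: first replace $\tau$ by $s = e^\tau - e^{-\tau}$, and then replace $s$ by $z$ via a cubic relation whose inversion yields the function $y(z)$ in \eqref{ztoy1}. Once these substitutions are made, the identification with the claimed integrand rests on the half-angle identity for $e^{2\mi\tan^{-1}(y/2)}$.

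First I set $s = e^\tau - e^{-\tau}$, a bijection $\mathbb{R} \to \mathbb{R}$ with $ds/d\tau = e^\tau + e^{-\tau} = \sqrt{s^2+4}$. This expresses the basic quantities as
\[ a = \frac{2\sqrt{2}}{\sqrt{s^2+4}}, \qquad b = -\frac{2\sqrt{2}\,s}{s^2+4}, \qquad d\tau = \frac{ds}{\sqrt{s^2+4}}, \]
and the identity $e^{3\tau}-e^{-3\tau} = s^3 + 3s$ recasts the formula for $\psi$ in \eqref{e.050303} as
\[ \psi = 2\tan^{-1}(s/2) - \frac{s^3 + 12s}{48\varepsilon^3} = 2\tan^{-1}(s/2) - \frac{z}{24\varepsilon^3}, \qquad z := \tfrac{s^3}{2} + 6s. \]
Hence $e^{\mi q\psi}$ factors cleanly as $e^{2\mi q\tan^{-1}(s/2)}\, e^{-\mi q z/(24\varepsilon^3)}$, which is precisely the shape required by the conclusion.

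Next I change variables from $s$ to $z$. The equation $2z = s^3 + 12s$ is a depressed cubic in $s$; its Cardano discriminant $z^2 + 64$ is always positive, so there is a unique real root $s = y(z)$ which, upon writing out Cardano's formula, coincides with \eqref{ztoy1}. The Jacobian $dz = \tfrac{3}{2}(s^2+4)\,ds$ gives $d\tau = 2\,dz/\bigl(3(s^2+4)^{3/2}\bigr)$, and combining everything produces
\[ a^p b^r\, d\tau = \frac{2(-1)^r (2\sqrt{2})^{p+r}}{3}\, \frac{y^r}{(y^2+4)^{(p+2r+3)/2}}\, dz. \]

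For the final step I would invoke $e^{2\mi\tan^{-1}(y/2)} = (2+\mi y)/(2-\mi y) = -(y-2\mi)/(y+2\mi)$, whose $q$-th power equals $(-1)^q (y-2\mi)^q/(y+2\mi)^q$. Since $(y^2+4)^{(p+2r+3)/2} = (y+2\mi)^{(p+2r+3)/2}(y-2\mi)^{(p+2r+3)/2}$, absorbing the factors $(y\pm 2\mi)^{\pm q}$ coming from the phase into this denominator reproduces the integrand of the lemma verbatim, including the sign $(-1)^{r+q}$. The only nonroutine point is the sign in the half-angle identity; because $q\in\mathbb Z$ there is no branch ambiguity in raising $e^{2\mi\tan^{-1}(y/2)}$ to the $q$-th power, so no further bookkeeping is required. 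The computation is otherwise purely mechanical.
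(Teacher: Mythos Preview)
Your proof is correct and follows essentially the same route as the paper: the paper performs the substitutions $x=e^\tau$, then $y=x-x^{-1}$, then $2z=y^3+12y$, whereas you merge the first two into the single step $s=e^\tau-e^{-\tau}$, and otherwise the half-angle identity and Jacobian bookkeeping are identical.
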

\begin{proof}
    Note that if $ y(x) = x-x^{-1},$ one has 
    \[     dy = (1+ x^{-2}) dx = (x + x^{-1}) x^{-1} dx,\]
    and 
    \[    y^2 = x^2 +x^{-2}-2 = (x+x^{-1})^2 - 4,  \qquad  (x+x^{-1})^2 = y^2 + 4. \]
    Furthermore, 
    \[  x^3 - x^{-3} = (x-x^{-1})(x^2 + x^{-2} + 1) = y (y^2+3).\]
    Therefore, we have  
    \begin{align*}
		&I_{p,q,r} =\int_{\mb R}a^pb^re^{\mi q\psi}d\tau\\
		& \xlongequal{x=e^{\tau}}\int_{0}^{+\infty}\frac{(2 \sqrt{2})^{p+r}(x^{-1}-x)^re^{i2q\tan^{-1} \frac{x-x^{-1}}{2}} e^{- \frac{\mi q}{48\varepsilon^3}\left( \left(x^3- x^{-3}\right)+9\left(x- x^{-1}\right)\right)}}{x (x+x^{-1})^{p+2r}} d x\\
		&\xlongequal{y = x-x^{-1}} (2 \sqrt{2})^{p+r}\int_{-\infty}^{\infty}\frac{e^{ \mi 2q\tan^{-1} \frac{y}{2}} e^{- \frac{\mi q}{48\varepsilon^3}\left(y(y^2+12)\right)}}{(y^2+4)^{(p+2r+1)/2}}(-y)^rdy\\
		&=(2 \sqrt{2})^{p+r}(-1)^r\int_{-\infty}^{\infty}\frac{(2+\mi y)^{2q}(y^2+4)^{-q} e^{- \frac{\mi q}{48\varepsilon^3}\left(y(y^2+12)\right)}}{(y^2+4)^{(p+2r+1)/2}}y^rdy\\
		& = (2 \sqrt{2})^{p+r}(-1)^{r+q}\int_{-\infty}^{\infty}\frac{e^{- \frac{\mi q}{48\varepsilon^3}\left(y(y^2+12)\right)}}{(y+2\mi)^{q+(p+2r+1)/2}(y-2\mi)^{-q+(p+2r+1)/2}}y^rdy\\
		&\xlongequal{2z=y^3+12y}(2 \sqrt{2})^{p+r}(-1)^{r+q}\int_{-\infty}^{\infty}\frac{e^{- \frac{\mi q}{24\varepsilon^3}z}}{(y+2\mi)^{q+(p+2r+1)/2}(y-2\mi)^{-q+(p+2r+1)/2}}\frac{2y^r}{3(y^2+4)}dz\\
        &=\frac{2}{3}(2 \sqrt{2})^{p+r}(-1)^{r+q}\int_{-\infty}^{\infty}\frac{e^{- \frac{\mi q}{24\varepsilon^3}z}y(z)^r}{(y(z)+2\mi)^{1+q+(p+2r+1)/2}(y(z)-2\mi)^{1-q+(p+2r+1)/2}}dz,
	\end{align*}
    where we  used the fact that the change of variable $z(y)=\frac12(y^3+12y)$ is a bijection on $\mathbb R$ with inverse \eqref{ztoy1}. 

\end{proof}

\end{document}